\newtheorem{definition}{Definition}
\newtheorem{theorem}{Theorem}
\newtheorem{lemma}{Lemma}
\newtheorem{corollary}{Corollary}
 \newtheorem{assumption}{Assumption}[section]
\def \E {\mathbb{E}}
\def \X {\mathcal{X}}
\def \dom {\text{dom}}
\def \intdom {\text{int}}
\def \R {\mathbb{R}}
\def \brho {\bar{\rho}}
\def \hx {\hat{x}}
\def \sumT {\sum_{t=1}^T}
\def \fbrho {f_{1/\brho}}
\def \nab {\nabla}
\newcommand{\Et}[1]{\E_t\left[#1\right]}
\newcommand{\Exp}[1]{\exp\left\{#1\right\}}
\newcommand{\norm}[2][]{\left\|#2\right\|^{#1}}
\newcommand{\innerdot}[2]{\left\langle #1 , #2 \right\rangle}
\newcommand{\Brace}[1]{\left(#1\right)}
\begin{document}
\title{\bf Stochastic Weakly Convex Optimization Under Heavy-Tailed Noises}
\author[1,2]{Tianxi Zhu\thanks{zhutianxi3291@mail.dlut.edu.cn}}
\author[1]{Yi Xu\thanks{yxu@dlut.edu.cn (corresponding author)}}
\author[3]{Xiangyang Ji\thanks{xyji@tsinghua.edu.cn}}
\affil[1]{School of Control Science and Engineering, Dalian University of Technology}
\affil[2]{School of Computer Science and Technology, Dalian University of Technology}
\affil[3]{Department of Automation, Tsinghua University}
\date{}
\maketitle
\vspace*{-0.5in}
\begin{center}
   First version: July 17, 2025 
\end{center}

\begin{abstract}
An increasing number of studies have focused on stochastic first-order methods (SFOMs) under heavy-tailed gradient noises, which have been observed in the training of practical deep learning models. In this paper, we focus on two types of gradient noises: one is sub-Weibull noise, and the other is noise under the assumption that it has a bounded \( p \)-th central moment ($p$-BCM) with \( p \in (1, 2] \). The latter is more challenging due to the occurrence of infinite variance when \( p \in (1, 2) \). Under these two gradient noise assumptions, the in-expectation and high-probability convergence of SFOMs have been extensively studied in the contexts of convex optimization and standard smooth optimization. However, for \textit{weakly convex} objectives—a class that includes all Lipschitz-continuous convex objectives and smooth objectives—our understanding of the in-expectation and high-probability convergence of SFOMs under these two types of noises remains incomplete. We investigate the high-probability convergence of the vanilla stochastic subgradient descent (SsGD) method under sub-Weibull noises, as well as the high-probability and in-expectation convergence of clipped SsGD under the $p$-BCM noises. Both analyses are conducted in the context of weakly convex optimization. For weakly convex objectives that may be non-convex and non-smooth, our results demonstrate that the theoretical dependence of vanilla SsGD on the failure probability and number of iterations under sub-Weibull noises does not degrade compared to the case of smooth objectives. Under \( p \)-BCM noises, our findings indicate that the non-smoothness and non-convexity of weakly convex objectives do not impact the theoretical dependence of clipped SGD on the failure probability relative to the smooth case; however, the sample complexity we derived is worse than a well-known lower bound for smooth optimization. 
\end{abstract}

\section{Introduction}
In this paper, we focus on the following stochastic optimization problem, which commonly arises in the field of machine learning.
\begin{align}\label{main_problem}
        \min_{x\in\mathcal{X}} f(x),\quad f(x)\triangleq \E_\zeta[f(x,\zeta)]
\end{align}
where $\zeta$ is a random variable following an unknown distribution and $f(x)$ is a closed proper function defined on a closed convex set $\X$. For solving problem~\eqref{main_problem}, stochastic first-order optimization methods (SFOMs) such as stochastic gradient descent (SGD)~\citep{Robbins1951ASA} have been commonly adopted. Unlike deterministic gradient-based methods, SFOMS are done using unbiased or biased estimators of the true gradient of $f$ to improve training efficiency. The convergence of SFOMs, whether in expectation or with high probability, has been extensively studied in the contexts of convex or smooth optimization~\citep{ghadimi2013stochastic,fang2018spider,cutkosky2019momentum,lan2020first, cutkosky2020momentum,liuzijian_2023}. Specifically, we focus on \textit{weakly convex optimization} in problem~\eqref{main_problem}, where $f(x)$ is weakly convex. The definition of weakly convexity is straightforward, i.e., a function $f$ is $\rho$-weakly convex if and only if $f(x)+\rho/2\|x\|^2$ is convex, where $\rho\geq0$. Readers might consider this merely a relaxation of standard convexity. In fact, an important proposition of weakly convex function is that given a $L$-Lipschitz continous and convex function $h:\R^m\mapsto\R$ and a $\beta$-smooth function $c:\R^d\mapsto\R^m$, then their composition $h\circ c$ is a $\beta L$-weakly convex function (see Lemma 4.2 in \citep{drusvyatskiy2017efficiencyminimizingcompositionsconvex}). By letting $h$ and $c$ be the identity function, we can conclude that weakly convex functions include all Lipschitz continuous convex functions and smooth functions, since the identity function is obviously 1-Lipschitz continuous, convex, and 1-smooth. Furthermore, this composition function structure can be connected with the architecture of neural networks. For instance, with a standard loss function like the logistic loss, the sample loss of a neural network employing sigmoid or tanh activation functions exhibits weak convexity while being non-convex and non-smooth.

A lot of prior works has studied the convergence of SFOMs in weakly optimization, such as \citep{Davis2_2018,davis2018stochasticmodelbasedminimizationweakly,Alacaoglu_2020,9345428,li_2022,gao2023delayedstochasticalgorithmsdistributed,WenzhiGao_QiDeng_2024,quanqihu_2024,liao2024errorboundsplcondition,hong_2025}. A key assumption in their theoretical analysis is that the gradient noise is supposed to be variance bounded or sub-Gaussian, which is defined as light-tailed type distribution in statistical terminology. However, recent observations have empirically shown that the distributions of gradient noises may be heavy-tailed, both in training convolutional neural networks~\citep{simsekli2019tail} and BERT~\citep{devlin2019bert}. All these observations motivate us to assume that gradient noises should satisfy weaker assumptions than variance boundedness or sub-Gaussian properties. In this paper, we consider the heavy-tailed gradient noises in weakly convex optimization.

We first focus on the sub-Weibull noises distribution. We say that $X$ follows a $\sigma$-sub-Weibull($\theta$) distribution, i.e., for a random vector $X$, if there exists a nonnegative real number $\sigma$ and satisfy
\begin{align}\label{def:sub_weibull}
        \E\left[\exp\left\{\left(|X|/\sigma\right)^{1/\theta}\right\}\right]\leq 2,
    \end{align}
where $\theta\geq1/2$ is the tail parameter. 
The sub-Weibull distribution is more general than the sub-Gaussian distribution. When $\theta = 1/2$, it reduces to sub-Gaussian distributions, and when $\theta = 1$, it corresponds to sub-exponential distributions. As the tail parameter $\theta$ increases, the tails of sub-Weibull distributions become heavier. For more equivalent definitions of sub-Weibull, we refer interested readers to \citep{vladimirova2020sub}. Prior works such as \citep{madden2020high, lishaojie_liuyong_2022, li2025sharper} have studied the high-probability convergence of SFOMs under sub-Weibull noises in smooth optimization\footnote{Smoothness assumption is stronger than the assumption of weakly convexity}.  Given the existence of weakly convex objectives in machine learning, we pose our first question:

\begin{center}
\textit{Q1: For stochastic weakly convex optimization under sub-Weibull noises, does SGD exhibit a worse dependency on the number of iterations and failure probability \(\delta\) than in smooth optimization cases?}
\end{center}

Note that due to the possible non-differentiability of weakly convex functions, the SGD method here specifically refers to the stochastic sub-gradient descent (SsGD) method.

Next, we focus on the assumption that assume the distribution of gradient noises have bounded $p$-th bounded central moment ($p$-BCM), i.e., suppose that the random vector $X$ is centralized ($\E[X]=0$) and if for $p\in(1,2]$, we have
\begin{align}\label{eq:p-BCM}
        \E[\|X\|^p]\leq \sigma^p\leq +\infty.
    \end{align}
We say random $X$ has bounded $p$-th bounded central moments. The $p$-BCM noise assumption is more challenging than the sub-Weibull noise assumption. Notice that if the gradient noises satisfy $p$-BCM assumption for $p<2$, the second central moment of the noises could be possibly infinite, hence the $p$-BCM assumption is  strictly weaker than the variance bounded assumption. Particularly, the case of $p\in(1,2)$ is much more complicated as the existing theory for vanilla SGD shows that SGD may divergence. A lot of  prior works, such as \citep{zhang_2020,Cutkosky_2021,Nguyen_2023,gorbunov2024highprobabilityconvergencecompositedistributed}, has demonstrated that through gradient clipping—i.e., restricting the stochastic gradient norm to a threshold $\lambda_t$ in $t$-th iteration —some SFOMs can be proven to converge under $p$-BCM gradient noises in smooth or convex optimization. This technique is also widely adopted in practical model training. Naturally, we want to extend the analysis to the weakly convex scenario. Hence, we pose our second interested question:
\begin{center}
\textit{Q2: For stochastic weakly convex optimization under $p$-BCM noises, does clipped-SGD exhibit a worse dependence on the number of iterations and the failure probability $\delta$ than in smooth optimization cases?}
\end{center}

Moreover, our work focuses on high-probability convergence of SFOMs. For a stochastic algorithm $\mathcal{A}$, high-probability convergence emphasizes the upper bounds on the number of iterations or oracle calls needed to find a solution $x$ satisfying $\mathbb{P}\{\mathcal{P}(x)\leq \varepsilon\}\geq1-\delta$ for any $\varepsilon>0$. Here, $\mathcal{P}(x)$ denotes a specific convergence measure of algorithm $\mathcal{A}$ and $\delta$ is commonly called failure probability or confidence level. Compared to in-expectation convergence, high-probability convergence focuses on the probability that a stochastic algorithm exhibits convergence in a single run, rather than its convergence in an average sense. This characteristic makes it more informative for practical applications.

This paper aims to give the convergence analysis of stochastic firs-order methods for solving problem~\eqref{main_problem} under heavy-tailed gradient noises in weakly convex optimization. Our main contributions are summarized as follows. 
\begin{itemize}
    \item We establish the \textbf{first high-probability convergence rate} of SsGD for weakly convex optimization under sub-Weibull noises, which is $  O\big( (\log(T/\delta)^{\min\{0,\theta-1\}} \log(1/\delta)+ \rho \log(1/\delta)^{2\theta} \log T)/\sqrt{T}\big)$ for an unknown time horizon $T$ while $O\big( \sqrt{\left(\rho \log(1/\delta)^{2\theta}\right)/T} + \left(\log(T/\delta)^{\min\{0,\theta-1\}} \log(1/\delta)\right)/T \big)$ for a fixed time horizon $T$. The former result shows that the dependence on the number of iterations and failure probability \( \delta \) aligns with that of the smooth case in the prior work, indicating that under sub-Weibull noises, the theoretical convergence rate of SGD/SsGD methods for non-convex, non-smooth yet weakly convex objectives does not degrade compared to the smooth case.
    
   \item  We establish the \textbf{first high-probability convergence rate} of mini-batch clipped-SsGD for weakly convex optimization under $p$-BCM noises, where the dependence on the failure probability $\delta$ is $O(\log(T/\delta))$ which is consistent with that observed in previous work on smooth optimization, while the sample complexity is $\widetilde{O}(\varepsilon^{-2p/(p-1)})$. When $p<2$ (infinite varance case), it is worse than the well-known lower bound of $\Omega(\varepsilon^{-(3p-2)/(p-1)})$ under $p$-BCM noises for standard smooth optimization.  We also derive in-expectation upper bounds for the mini-batch clipped-SsGD method in weakly convex optimization under $p$-BCM noises. It is worth noting that in our analysis—whether for high-probability convergence or in-expectation convergence—the logarithmic term $\log T$ can be eliminated by fixing the time horizon $T$.
\end{itemize}

\section{Related work}
\subsection{SFOMs under the sub-Weibull noises}
\cite{madden2020high} presented the first high-probability convergence analysis for SGD under sub-Weibull type noises in smooth optimization. They showed that the average of the squared gradient norms of SGD converges to zero with a high-probability upper bound of $O\left(\log T\log(1/\delta)^{2\theta}+\log(T/\delta)^{\min\{0,\theta-1\}}\log(1/\delta)/\sqrt{T}\right)$. When $\theta = 1/2$, their result reduces to the sub-Gaussian case. \cite{lishaojie_liuyong_2022} also derived the same rate under a relaxed condition on the boundedness of true gradients. They further demonstrated that clipped-SGD under sub-Weibull noises can converge at the rate $O((\log^\theta(T/\delta)\log T+\log^{2\theta+1} (T)\log(T/\delta))/\sqrt{T})$ without requiring the true gradients to be bounded. \cite{liu2022convergence} investigated the in-expectation convergence of AdaGradNorm, an adaptive variant of SGD, under sub-Weibull noises in quasar convex and smooth optimization. \cite{liu2024revisitinglastiterateconvergencestochastic} studied the high-probability convergence of the last iterate of SGD under sub-Weibull noises in convex and smooth optimization. \cite{li2025sharper} analyzed the high-probability convergence of momentum-based SGD under sub-Weibull noises and obtained the same convergence rate as \citep{madden2020high} and \citep{lishaojie_liuyong_2022}. \cite{9758044,yu2025distributed} investigated the convergence of (proximal-)SGD under sub-Weibull noises in online optimization and distributed composite optimization, respectively. In another research direction, \cite{chen2025error} studied the algorithmic stability and generalization bounds for SGD in pairwise learning. 

However, the aforementioned works all studied the convergence of SFOMs in convex or smooth optimization under the sub-Weibull noises. To the best of our knowledge, there is no existing literature analyzing the high-probability convergence of the vanilla SGD method under sub-Weibull noises in non-smooth weakly convex optimization.

\subsection{clipped-SFOMs under the \textit{p}-BCM noises}
\cite{zhang_2020} presented the first in-expectation upper bounds for clipped-SGD under the $p$-BCM noise assumption. Their results show that the high probability sample complexity of clipped-SGD is at most $O\left(\max\{\varepsilon^{-(3p-2)/(p-1)},\varepsilon^{-(3p-1)/(2p+2)}\}\right)$ for finding an $\varepsilon$-stationary point in standard smooth optimization. When the problem is assumed to be $\mu$-strongly convex, the complexity improves to $O((\mu\varepsilon)^{-p/(2p-1)})$. \cite{Cutkosky_2021} proposed a normalized SGD method with clipping and momentum, demonstrating  that its high-probability sample complexity under the $p$-BCM noise assumption can achieve $O(\varepsilon^{-(3p-2)/(p-1)})$ by carefully choosing the threshold $\lambda_t$ and fixing the time horizon $T$ in standard smooth optimization. For smooth, convex optimization, \cite{Nguyen_2023} showed that the projected SGD method with clipping has an improved sample complexity of $O(\varepsilon^{-p/(p-1)})$ with a fixed time horizon $T$. \cite{sadiev2023highprobabilityboundsstochasticoptimization} demonstrated that clipped-SSTM (stochastic similar triangles methods) can achieve acceleration with an improved high-probability sample complexity of $O(\varepsilon^{-(p-1)/p})$, and \cite{gorbunov2024highprobabilityconvergencecompositedistributed} proposed a modified proximal SGD method with clipping for distributed composite optimization. \cite{liu_2023stochasticnonsmoothconvexoptimization} provided the first in-expectation and high-probability convergence analysis of the projected clipped-SGD for non-smooth and convex optimization under $p$-BCM noises. \citep{liu_2023c} introduced the first accelerated clipped-SGD method, achieving an improved high-probability sample complexity of $O(\varepsilon^{-(2p-1)/(p-1)})$ under $p$-BCM noises for non-convex, smooth optimization. \cite{chezhegov2025convergenceclippedsgdconvexl0l1smooth} studied clipped-SGD under $p$-BCM noises for convex, ($L_0,L_1$)-smooth optimization, where ($L_0,L_1$)-smoothness extends standard $L$-smoothness, as first proposed by \cite{zhang2020gradientclippingacceleratestraining}. \cite{li2023highprobabilityanalysisnonconvex} investigated the high-probability convergence of the AdaGradNorm method with clipping under the assumption that stochastic gradients have $p$-th bounded moments (a condition strictly stronger than $p$-BCM noises). \cite{chezhegov2025clippingimprovesadamnormadagradnorm} revealed an interesting finding: for convex and smooth problems with specific initial conditions and hyperparameter settings, the convergence rates of Adam and AdaGrad without clipping cannot exhibit polylogarithmic dependence on the failure probability $\delta$ under $p$-BCM noises.  Recently, works such as \citep{liu_2025nonconvexstochasticoptimizationheavytailed, hubler_2025gradientclippingnormalizationheavy, kornilov2025signoperatorcopingheavytailed, he2025complexitynormalizedstochasticfirstorder} have focused on SFOMs without clipping under $p$-BCM noises. 

For lower bound theory, \cite{zhang_2020} demonstrates that under $p$-BCM noises, the convergence rate of any vanilla SGD method cannot exceed $\Omega(T^{-(p-1)/(3p-2)})$ for standard smooth optimization. Additionally, \cite{zhang_2020} shows that for strongly convex optimization under $p$-BCM noises, the convergence rate of any SFOMs cannot be faster than $\Omega(T^{-2(p-1)/p})$. Meanwhile, \cite{vural2022mirrordescentstrikesagain} establishes that for convex optimization under $p$-BCM noises, the convergence rate of any SFOMs is bounded below by $\Omega(T^{-(p-1)/p})$. To the best of our knowledge, no existing work has established lower bounds on the convergence rate of SFOMs under heavy-tailed noises for weakly convex optimization.

Closely related to our work, \cite{LiuLangqi_2024} developed a first-order online optimization algorithm guaranteeing high-probability convergence under $p$-BCM noises for non-smooth, non-convex online optimization—though their analysis allows non-smooth objective functions, it still requires continuous differentiability. \cite{hu_2025} provided the first in-expectation convergence analysis for clipped-SGD under $p$-BCM noises in weakly convex optimization. However, to the best of our knowledge, no prior works have analyzed the high-probability convergence of clipped-SGD under $p$-BCM noises for weakly convex optimization, particularly when $p < 2$. 

\section{Preliminaries}  
\textbf{Notions:} For any $N > 1$, $[N]$ denotes the set $\{1, \dots, N\}$. Given $x, y \in \mathbb{R}^d$, $\langle x, y \rangle$ represents the standard Euclidean inner product, i.e., $\langle x, y \rangle = x^T y$. In our analysis, $\|\cdot\|$ denotes the $\ell_2$ norm. For a set $S$, $\intdom(S)$ signifies the interior of $S$. $\textrm{Proj}_\mathcal{X}(y)$ means projecting $y$ onto a given closed convex set $\mathcal{X}$. $\partial f(x)$ denotes the sub-differential set of a function $f$ at $x$. $g(x, \zeta)$ denotes the stochastic gradient at $x$ with respect to the random variable $\zeta$.

Since the objective function \( f \)  in Problem~\eqref{main_problem} is potentially non-smooth and non-convex, the existence of the classical convex sub-gradient \( \partial f \) cannot be ensured. In this context, we leverage the Fréchet sub-gradient, which exists if \( f \) is lower semi-continuous at the point of interest. The following definition of Fréchet sub-differential aligns with Definition 1.1 in~\citep{Kruger_Alex_2003}.
\begin{definition}{(Fréchet sub-differential)}
Let $f:\R^d\mapsto\bar{\R}$ be proper and closed. If $f$ is finite at $x$, we can define a set
\begin{align*}
    \partial f(x) \triangleq \left\{g\in\R^d:\lim_{y\rightarrow x}\inf\frac{f(y)-f(x)-\langle g,y-x\rangle}{\|y-x\|}\geq 0\right\},
\end{align*}
which is called a (Fréchet) sub-differential of $f$ at $x$ and each elements of $\partial f(x)$ are refered to as (Fréchet) sub-gradients (regular gradients). 
\end{definition}
Fréchet sub-gradients can be regarded as an extension of convex sub-gradients or gradients. Specifically, if 
$f$ is convex, the Fréchet sub-differential reduces to the classical convex sub-differential, i.e., $\partial f(x) = \{g:f(y)-f(x) \geq \langle g, y-x\rangle, \forall y\in\dom(f)\}$. If $f$ is differentiable at $x$, the Fréchet sub-differential collapses to the singleton set containing only the gradient: $\partial f(x) = \{\nabla f(x)\}$

In the context of weakly convex optimization, defining the \textit{stationary point} of $f$
 is crucial due to its non-smoothness and non-convexity. \citep{Davis_2018} is the first work to use the gradient of the \textit{Moreau envelope} of a weakly convex function to analysis the convergence of the stochastic sub-gradient method. For a $\rho$-weakly convex function $f$, its Moreau envelope is defined as follows:
 \begin{align}~\label{eq: fun_moreau_envelope}
     f_{1/\bar{\rho}}(x) \triangleq \min_{y\in\R^d}\left\{f(y)+\mathbb{I}_\X(y)+\frac{\bar{\rho}}{2}\|y-x\|^2\right\},
 \end{align}
 where $\bar{\rho}>\rho$ and $\mathbb{I}_\X$ is the indicator function defined over $\X$, i.e., $\mathbb{I}_\X(y)=0$, if $y\in\X$, otherwise, $\mathbb{I}_\X(y)=+\infty$. Let we denote $\hat{x}\triangleq\arg\min_{y\in\R^d}\left\{f(y)+\textrm{I}_\X(y)+\frac{\bar{\rho}}{2}\|y-x\|^2\right\}$, we have following properties of $f_{1/\bar{\rho}}$:
 \begin{align*}
     \begin{cases}
         &\nabla f_{1/\bar{\rho}}(x) = \bar{\rho}(x-\hat{x})\\
         &f(\hat{x})\leq f(x)\\
         &dis(0,\partial f(\hat{x}))\leq \|\nabla f_{1/\bar{\rho}}(x)\|
     \end{cases}
 \end{align*}
 From the preceding inequalities, it can be readily inferred that if $\|\nabla f_{1\bar{\rho}}(x)\|\leq \varepsilon$ then there exits a point $\hat{x}$ that is near $x$ such that $dis(0,\partial f(\hat{x}))\leq\varepsilon$. Consequently, we leverage the gradients of the Moreau envelopes of $f$ to quantify the convergence of the algorithm.

 The following standard assumptions are essential for our analysis:
 \begin{assumption}\label{assp:weak_conv}
     $f$ is a $\rho$-weakly convex function, where $\rho>0$.
 \end{assumption}
 \begin{assumption}\label{assp:lower_bound}
     $f(x)$ has at least one local minimum $f^*$ on $\mathcal{X}$ and $f^*> -\infty$;
 \end{assumption}

 \begin{assumption}\label{assp:G_Lipsch}
     $\exists G>0,\, \forall x\in\mathcal{X},\,\forall g\in\partial f(x),\, \|g\|\leq G$;
 \end{assumption}

 \begin{assumption}\label{assp:unbiased}
     The stochastic gradient oracle is unbiased, i.e., $\E_\zeta[g(x,\zeta)]\in\partial f(x)$.
 \end{assumption}

 \begin{assumption}\label{assp:sub_weibull}
      The norm of gradient noise $\|g(x,\zeta)-\E_\zeta[g(x,\zeta)]\|$ is a $\sigma$-sub-Weibull($\theta$) random, where $\theta\geq1/2$ 
 \end{assumption}

\begin{assumption}\label{assp:p_PCM}
    The gradient noise $g(x,\zeta)-\E_\zeta[g(x,\zeta)]$ satisfies $p$-BCM assumption for $p\in(1,2]$.
\end{assumption}
Additionally,we require $\X \subseteq \intdom(\dom f)$ to ensure the existence of sub-gradients of $f$ over $\X$.
Assumption~\ref{assp:lower_bound} and Assumption~\ref{assp:unbiased} are standard conditions in non-smooth stochastic optimization, while Assumption~\ref{assp:G_Lipsch} has been widely adopted in weakly convex optimization\citep{Davis_2018,Davis2_2018,Alacaoglu_2020,li_2022,hu_2025,hong_2025}. 

\section{Main results}
In this section, we formally present the main results of this paper. First, we analyze the high-probability convergence of the projection stochastic sub-gradient descent (SsGD) method with sub-Weibull-type gradient noise under the weakly convex setting in Section~\ref{sec:SsGD_subweibull}. Furthermore, still within the weakly convex framework, we study the high-probability convergence of Clipped-SsGD with $p$-th bounded noise, which allows the second-moment norm of gradient noise to be infinite when $1 < p < 2$ in Section~\ref{sec:Cipped-SsGD_p_BCM}. It is worth noting that none of our analyses require the constraint domain $\mathcal{X}$ to be compact.
\subsection{SsGD under the sub-Weibull noises}\label{sec:SsGD_subweibull}
First, we present the projected SsGD algorithm as follows.
\begin{algorithm*}[t]
    \caption{Projected SsGD method} \label{algo:Projected_SsGD}
    \begin{algorithmic}[1]
       \STATE \textbf{Input}: $x_0 \in \X$ and $\eta_0>0$
       \FOR{$t=1,\cdots,T$}
       \STATE Access to unbiased stochastic gradient oracle to get $g_t$
       \STATE $x_{t+1} = \mathrm{Proj}_\X (x_t - \eta_t g_t)$
       \ENDFOR
    \end{algorithmic}
\end{algorithm*}

Now, we summarize a general high-probability convergence theorem for the projected SsGD method with sub-Weibull-type noise under the weakly convex setting. This theorem obviates the need for additional explicit form of $\eta_t$. The detailed proofs are include in the Appendix.

\begin{theorem}\label{thm: SsGD}
    Suppose Assumptions~\ref{assp:weak_conv}, \ref{assp:lower_bound}, \ref{assp:G_Lipsch}, \ref{assp:unbiased} and \ref{assp:sub_weibull} hold. Let $\{x_t\}$ be the iterate produced by projection-SsGD. Let $f_{1/\bar{\rho}(x_t)}$ is defined as~\eqref{eq: fun_moreau_envelope} and set $\bar{\rho}=3\rho$. If $\eta_t$ satisfy 
    \begin{align}
        \sum_{t=1}^T \eta_t = +\infty,\quad \sum_{t=1}^T\eta_t^2 \leq +\infty
    \end{align}
    Then given any $\delta\in(0,1)$, with the probability at least $1-\delta$:
        \begin{itemize}
        \item when $\theta=\frac{1}{2}$, we have the following inequality
            \begin{align}
                \sum_{t=1}^T \frac{\eta_t}{\sum_{s=1}^T\eta_s} \|f_{1/3\rho}(x_t)\|^2 \le O&\left(\frac{\sigma^2\log(1/\delta)\max_{t\in[T]}\eta_t}{\sum_{t=1}^T\eta_t}\right.\nonumber\\
        &\left.+\frac{\Delta_1+\rho(\log(1/\delta)\sigma^2+G^2)\sum_{t=1}^T\eta_t^2}{\sum_{t=1}^T\eta_t}\right)
            \end{align}
            \item when $\theta = (\frac{1}{2},1]$, we have the following inequality:
            \begin{align}
                \sum_{t=1}^T\frac{\eta_t}{\sum_{s=1}^T\eta_s}\|\nabla f_{1/3\rho}(x_t)\|^2\leq O&\left(\frac{\Delta_1+\max\{\sigma^2,\sigma G\}\log(1/\delta)\max_{t\in[T]}\eta_t}{\sum_{t=1}^T\eta_t}\right.\nonumber\\
                &\left.+\rho([\log(1/\delta)]^{2\theta}\sigma^2+G^2)\frac{\sum_{t=1}^T\eta_t^2}{\sum_{t=1}^T\eta_t}\right)
            \end{align}
            \item when $\theta>1$, we have the following inequality
            \begin{align}
                \sum_{t=1}^T\frac{\eta_t}{\sum_{s=1}^T\eta_s}\|\nabla f_{1/3\rho}(x_t)\|^2\leq O&\left(\frac{\Delta_1+D(\theta)\log(1/\delta)\max_{t\in[T]}\eta_t}{\sum_{t=1}^T\eta_t}\right.\nonumber\\
        &\left.+\rho([\theta\log(1/\delta)]^{2\theta}\sigma^2+G^2)\frac{\sum_{t=1}^T\eta_t^2}{\sum_{t=1}^T\eta_t}\right)
            \end{align}
        \end{itemize}
    where $\Delta_1\triangleq f_{1/3\rho}(x_1)-\min_\X f(x)$, $D(\theta)=\max\{ G[\log(T/\delta)]^{\theta-1}\sigma,2^{3\theta+1}\Gamma(3\theta+1)\sigma^2\}$ and $\Gamma(\cdot)$ is denoted as \textit{Gamma function}.
\end{theorem}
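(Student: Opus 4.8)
# Proof Proposal for Theorem 1

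\textbf{Overall strategy.} The plan is to combine the classical weak-convexity analysis via the Moreau envelope (à la Davis--Drusvyatskiy) with a high-probability concentration argument for the martingale noise terms, carefully tracking the sub-Weibull tail parameter $\theta$. The starting point is the standard recursion for $f_{1/\brho}$ along the SsGD iterates. Writing $\hx_t = \argmin_y\{f(y)+\mathbb{I}_\X(y)+\frac{\brho}{2}\|y-x_t\|^2\}$ and using nonexpansiveness of the projection together with the definition of the Moreau envelope, one gets
\begin{align*}
f_{1/\brho}(x_{t+1}) \le f(\hx_t) + \frac{\brho}{2}\|\hx_t - x_{t+1}\|^2 \le f_{1/\brho}(x_t) - \brho\eta_t\innerdot{g_t}{x_t - \hx_t} + \frac{\brho}{2}\eta_t^2\|g_t\|^2.
\end{align*}
Splitting $g_t = \nab f(x_t) + \xi_t$ with $\xi_t$ the noise, and using $\rho$-weak convexity of $f$ to bound $\innerdot{\nab f(x_t)}{x_t - \hx_t} \ge f(x_t) - f(\hx_t) - \frac{\rho}{2}\|x_t-\hx_t\|^2 \ge \frac{\brho - \rho}{2}\|x_t - \hx_t\|^2$ (combined with $f(\hx_t)\le f(x_t)$), and recalling $\|\nab f_{1/\brho}(x_t)\| = \brho\|x_t - \hx_t\|$, one obtains with $\brho = 3\rho$ a per-step bound of the shape
\begin{align*}
f_{1/\brho}(x_{t+1}) \le f_{1/\brho}(x_t) - c\,\eta_t\|\nab f_{1/\brho}(x_t)\|^2 - \brho\eta_t\innerdot{\xi_t}{x_t-\hx_t} + \frac{\brho}{2}\eta_t^2\|g_t\|^2
\end{align*}
for an absolute constant $c>0$. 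Telescoping over $t=1,\dots,T$ and rearranging yields
\begin{align*}
c\sum_{t=1}^T \eta_t\|\nab f_{1/\brho}(x_t)\|^2 \le \Delta_1 + \underbrace{\brho\sum_{t=1}^T \eta_t\innerdot{\xi_t}{\hx_t - x_t}}_{(\mathrm{I})} + \underbrace{\frac{\brho}{2}\sum_{t=1}^T\eta_t^2\|g_t\|^2}_{(\mathrm{II})}.
\end{align*}
Dividing through by $\sum_s\eta_s$ gives the left-hand side of the theorem; it remains to bound $(\mathrm{I})$ and $(\mathrm{II})$ in high probability.

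\textbf{Controlling the noise terms.} Term $(\mathrm{II})$ requires bounding $\|g_t\|^2 \le 2\|\nab f(x_t)\|^2 + 2\|\xi_t\|^2 \le 2G^2 + 2\|\xi_t\|^2$; since $\|\xi_t\|$ is $\sigma$-sub-Weibull$(\theta)$, $\|\xi_t\|^2$ is sub-Weibull$(2\theta)$, and a union bound or a sum-of-sub-Weibull concentration inequality (e.g. the generalized Bernstein inequality for sub-Weibull sums, cf. the references to \cite{madden2020high}) controls $\sum_t \eta_t^2\|\xi_t\|^2$ by its mean plus a deviation of order $[\log(1/\delta)]^{2\theta}\sigma^2\sum_t\eta_t^2$ in the heavy-tailed regime (with the $\max_t\eta_t$ term and Gamma-function constant $D(\theta)$ appearing when $\theta>1$ because the maximal summand dominates). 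Term $(\mathrm{I})$ is the crux: it is a martingale difference sum, since $\E_t[\xi_t] = 0$ and $\hx_t - x_t$ is $\mathcal{F}_t$-measurable, with $\|\hx_t - x_t\| = \|\nab f_{1/\brho}(x_t)\|/\brho$ itself random and a priori unbounded. The plan is the standard trick: bound $\eta_t|\innerdot{\xi_t}{\hx_t-x_t}| \le \frac{c}{2}\eta_t\|\nab f_{1/\brho}(x_t)\|^2/\brho + C\eta_t\|\xi_t\|^2$ by Young's inequality, absorbing the first piece into the left-hand side, but this only handles the ``diagonal'' part. For the genuine martingale part one applies a Freedman-type / sub-Weibull martingale concentration inequality (again following the technology in \cite{madden2020high}, \cite{lishaojie_liuyong_2022}): conditionally each increment is sub-Weibull$(\theta)$ with a variance proxy controlled by $\eta_t^2\|\nab f_{1/\brho}(x_t)\|^2\sigma^2$, so the sum deviates from zero by at most (roughly) $\sqrt{\log(1/\delta)\sigma^2\sum_t \eta_t^2\|\nab f_{1/\brho}(x_t)\|^2}$ plus a higher-order $[\log(T/\delta)]^{\theta}$ correction; AM-GM then splits this into a term absorbed on the left and a residual of order $\log(1/\delta)\sigma^2\max_t\eta_t / \text{(normalization)}$ after dividing by $\sum_s\eta_s$. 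This is exactly why the $\max_{t\in[T]}\eta_t$ factor appears in the $\sigma^2\log(1/\delta)$ term of each case.

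\textbf{Case analysis on $\theta$.} The three cases differ only in which sub-Weibull concentration bound is sharp. For $\theta = 1/2$ (sub-Gaussian), standard Freedman/Bernstein gives the clean $\sigma^2\log(1/\delta)$ scaling with no extra logarithmic factors and no need for the $\max\{\sigma^2,\sigma G\}$ form. For $\theta\in(1/2,1]$ (between sub-Gaussian and sub-exponential), the martingale inequality picks up the $[\log(1/\delta)]^{2\theta}$ factor in the $\eta_t^2$-coefficient and a $\max\{\sigma^2,\sigma G\}$ in the cross term coming from the mixed quadratic/linear behavior of the sub-exponential tail. For $\theta > 1$ (genuinely heavy-tailed), one must use the explicit moment bounds for sub-Weibull variables — hence the Gamma-function constant $2^{3\theta+1}\Gamma(3\theta+1)$ in $D(\theta)$, which is the bound on $\E[\|\xi_t\|^2]$-type moments, and the $[\log(T/\delta)]^{\theta-1}$ factor from the extra tail mass when summing $T$ sub-Weibull$(\theta)$ terms — together with the $[\theta\log(1/\delta)]^{2\theta}$ scaling. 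In each case one finally invokes $\brho = 3\rho$ to convert the leading constant and collect $\rho$-dependence, divides by $\sum_s\eta_s$, and matches the stated bound.

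\textbf{Main obstacle.} I expect the hard part to be the high-probability control of the martingale term $(\mathrm{I})$ with an \emph{unbounded} multiplier $\|\hx_t - x_t\|$: one cannot simply apply Azuma. The resolution — establishing a ``self-bounding'' martingale concentration where the deviation is controlled by the (random) sum of conditional second moments, which is itself a constant times $\sum_t\eta_t^2\|\nab f_{1/\brho}(x_t)\|^2$, and then absorbing a fraction of that sum back into the left-hand side via AM-GM — is delicate precisely because in the sub-Weibull regime the concentration inequality has both a sub-Gaussian-type and a sub-Weibull-type regime, and one must check that the threshold at which the regimes switch does not spoil the absorption. Getting the constants and the switch-over right, uniformly in $\theta$, is the technical core of the proof; everything else is the by-now-routine Moreau-envelope descent computation.
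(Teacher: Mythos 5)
Your proposal follows essentially the same route as the paper's proof: the Moreau-envelope descent recursion obtained from nonexpansiveness of the projection and weak convexity, telescoping, a sub-Weibull($2\theta$) concentration bound for $\sum_t \eta_t^2\|\xi_t\|^2$, and a sub-Weibull Freedman-type martingale inequality for $\sum_t \bar{\rho}\eta_t\langle \hat{x}_t-x_t,\xi_t\rangle$ whose variance proxy $\sum_t \sigma^2\eta_t^2\|\nabla f_{1/\bar{\rho}}(x_t)\|^2$ is absorbed into the left-hand side, followed by the same case analysis in $\theta$. The one point you single out as the main obstacle---the ``a priori unbounded'' multiplier $\|\hat{x}_t-x_t\|$---is in fact resolved by an elementary lemma in the paper: $\|\hat{x}_t-x_t\|\le 2\|g\|/(\bar{\rho}-\rho)\le 2G/(\bar{\rho}-\rho)$ by Assumption~\ref{assp:G_Lipsch}, which supplies the deterministic cap $M_t$ on the conditional sub-Weibull parameter that the Freedman-type inequality requires when $\theta>1/2$; with that observation your absorption argument goes through exactly as you describe.
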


In Theorem~\ref{thm: SsGD}, we omit certain constants in the big-O complexity of the upper bound, and the full version of Theorem~\ref{thm: SsGD} can be found in section \ref{sec: SsGD_subweibull}. The requirement of $\bar{\rho}=3\rho$ is only to simplify the upper bound in Theorem~\ref{thm: SsGD}. In fact, our analysis allows for any $\bar{\rho}>\rho$.

From Theorem~\ref{thm: SsGD}, it is straightforward to observe that as $\theta$ increases—indicating a heavier tail in the gradient noise distribution—the iteration complexity of SsGD deteriorates. This is consistent with empirical observations when SsGD handles heavy-tailed gradient noise. Interestingly, when $\theta>1$, although we did not specify the explicit form of $\eta_t$, an extra term $\log(1/\delta)\log(T/\delta)^{\theta-1}$ emerges in the upper bound complexity. In our analysis, the appearance of $\log(1/\delta)\log(T/\delta)^{\theta-1}$ arises because we employ the sub-Weibull Freedman inequality (see Theorem \ref{app_thm:sub_weibull_freedman_ineq}), which can be viewed as an extension of the sub-Gaussian freedman inequality (see Lemma 1 in~\citep{lixiaoyu_2020}). Note that while the highest order of $\sigma$ in Theorem~\ref{thm: SsGD} is $2$ for $\theta > 1$, the constant coefficient preceding $\sigma^2$ is $\Gamma(3\theta + 1)$—defined as the extension of the Factorial function—which grows extremely large as $\theta$ increases. All above analysis implies that SsGD performs poorly with $\sigma$-sub-Weibull($\theta$)-type gradient noise when $\theta > 1$, even though the square of the noise norm remains bounded.

To facilitate a better comparison with previous work, we first set $\eta_t=O(1/\sqrt{t})$. Then the following corollary can be derived from Theorem~\ref{thm: SsGD} easily.

\begin{corollary}\label{coro: time_varying_step_SsGD_subweibull}
Suppose Assumptions~\ref{assp:weak_conv}, \ref{assp:lower_bound}, \ref{assp:G_Lipsch}, \ref{assp:unbiased} and \ref{assp:sub_weibull} hold. Let $\{x_t\}$ be the iterate produced by projection-SsGD. Let $f_{1/\bar{\rho}(x_t)}$ is defined as~\eqref{eq: fun_moreau_envelope} and set $\bar{\rho}=3\rho$. Let $\eta_t=\frac{\gamma}{\sqrt{t}}$, where $\gamma$ can be any positive number.  Given any $\delta\in(0,1)$, the SsGD method can converge with the probability at least $1-\delta$
        \begin{itemize}
            \item when $\theta=\frac{1}{2}$, we have the following inequality:
            \begin{align}
                \frac{1}{T}\sum_{t=1}^T \|\nabla f_{1/3\rho}(x_t)\|^2&\leq O\left(\frac{\Delta_1}{\sqrt{T}}+\rho(\log(1/\delta)\sigma^2+G^2)\frac{\log(T)}{\sqrt{T}}\right)
            \end{align}
            \item when $\theta = (\frac{1}{2},1]$, we have the following inequality:
            \begin{align}
                \frac{1}{T}\sum_{t=1}^T \|\nabla f_{1/3\rho}(x_t)\|^2\leq O\left(\frac{\Delta_1+\sigma G\log(1/\delta)}{\sqrt{T}}+\frac{\rho([\log(1/\delta)]^{2\theta}\sigma^2+G^2)\log T}{\sqrt{T}}\right)
            \end{align}   
            \item when $\theta > 1$, we have the following inequality:
            \begin{align}
                 \frac{1}{T}\sum_{t=1}^T \|\nabla f_{1/3\rho}(x_t)\|^2&\leq O\left(\frac{\Delta_1}{\sqrt{T}}+\frac{D(\theta)\log(1/\delta)}{\sqrt{T}}+\rho((\theta\log(1/\delta))^{2\theta}\sigma^2+G^2)\frac{\log T}{\sqrt{T}}\right)
            \end{align}
        \end{itemize}
        where $D(\theta)$ is defined as in Theorem \ref{thm: SsGD}.
\end{corollary}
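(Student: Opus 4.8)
The plan is to obtain Corollary~\ref{coro: time_varying_step_SsGD_subweibull} as a direct specialization of Theorem~\ref{thm: SsGD} to the step size $\eta_t=\gamma/\sqrt{t}$, so the proof is essentially arithmetic plus one structural observation. The first step is to evaluate the three step-size functionals that appear on the right-hand sides of the theorem, namely $\sum_{t=1}^T\eta_t$, $\sum_{t=1}^T\eta_t^2$, and $\max_{t\in[T]}\eta_t$. Since $\eta_t$ is strictly decreasing in $t$, we immediately get $\max_{t\in[T]}\eta_t=\eta_1=\gamma$. A standard integral comparison gives $\sum_{t=1}^T t^{-1/2}=\Theta(\sqrt{T})$ (bounded between $2\sqrt{T}-2$ and $2\sqrt{T}-1$), hence $\sum_{t=1}^T\eta_t=\Theta(\gamma\sqrt{T})$; and the harmonic sum gives $\sum_{t=1}^T t^{-1}=\Theta(\log T)$, hence $\sum_{t=1}^T\eta_t^2=\Theta(\gamma^2\log T)$. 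These three estimates are the only analytic inputs beyond the theorem itself, and I would note that the requirements $\sum_t\eta_t=+\infty$ and $\sum_t\eta_t^2\le+\infty$ in Theorem~\ref{thm: SsGD} are satisfied.

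Next I would substitute these estimates into each of the three regimes of Theorem~\ref{thm: SsGD} to bound the $\eta_t$-weighted average $W\triangleq\sum_{t=1}^T\frac{\eta_t}{\sum_{s=1}^T\eta_s}\|\nabla f_{1/3\rho}(x_t)\|^2$. In every case the denominator $\sum_s\eta_s=\Theta(\gamma\sqrt{T})$ turns the constant term $\Delta_1$ into $O(\Delta_1/\sqrt{T})$ and turns the $\max_t\eta_t=\gamma$ term into an $O(\cdot/\sqrt{T})$ contribution, while the factor $\sum_t\eta_t^2/\sum_s\eta_s=\Theta(\log T/\sqrt{T})$ produces the $\rho(\cdots)\log T/\sqrt{T}$ term. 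Treating $\gamma,\rho,\sigma,G$ as constants, the subleading noise terms—for instance the $\sigma^2\log(1/\delta)/\sqrt{T}$ term when $\theta=\frac12$, and the $\sigma^2$ part of $\max\{\sigma^2,\sigma G\}$ when $\theta\in(\frac12,1]$—are dominated by the corresponding $\rho\,[\log(1/\delta)]^{2\theta}\sigma^2\log T/\sqrt{T}$ term because $\log T\ge 1$; absorbing them recovers exactly the three displayed bounds of the corollary, but for the weighted average $W$ rather than the uniform average.

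The one genuinely non-routine step—and the part I expect to require the most care—is precisely that Theorem~\ref{thm: SsGD} controls the $\eta_t$-weighted average $W$, whereas the corollary asserts a bound on the uniform average $\frac1T\sum_{t=1}^T\|\nabla f_{1/3\rho}(x_t)\|^2$; for a non-constant step size these are distinct quantities and must be reconciled. I would bridge them using monotonicity of $\eta_t$. Since $\eta_t\ge\eta_T=\gamma/\sqrt{T}$ for all $t\in[T]$, we have $W=\frac{1}{\sum_s\eta_s}\sum_t\eta_t\|\nabla f_{1/3\rho}(x_t)\|^2\ge\frac{\eta_T T}{\sum_s\eta_s}\cdot\frac1T\sum_t\|\nabla f_{1/3\rho}(x_t)\|^2$, and therefore $\frac1T\sum_t\|\nabla f_{1/3\rho}(x_t)\|^2\le\frac{\sum_s\eta_s}{\eta_T T}\,W$. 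Because $\eta_T T=\gamma\sqrt{T}$ while $\sum_s\eta_s\le 2\gamma\sqrt{T}$, the prefactor satisfies $\sum_s\eta_s/(\eta_T T)\le 2=\Theta(1)$, so the uniform average is at most a constant multiple of $W$. Combining this last inequality with the bounds on $W$ from the preceding paragraph yields the three stated rates and completes the proof; the remaining manipulations are routine constant bookkeeping inside the big-$O$.
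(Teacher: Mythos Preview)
Your proposal is correct and essentially matches the paper's argument: both evaluate $\max_t\eta_t=\gamma$, $\sum_t\eta_t=\Theta(\gamma\sqrt{T})$, $\sum_t\eta_t^2=\Theta(\gamma^2\log T)$, and both use the monotonicity bound $\eta_t\ge\eta_T=\gamma/\sqrt{T}$ to pass from the $\eta_t$-weighted average to the uniform average via a $\Theta(1)$ prefactor. The only cosmetic difference is that the paper re-enters the proof of Theorem~\ref{thm: SsGD} at its penultimate step and multiplies by $(\eta_T T)^{-1}$ directly, whereas you first quote Theorem~\ref{thm: SsGD} and then multiply by $\sum_s\eta_s/(\eta_T T)\le 2$; these are the same manipulation.
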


Note that the results of Corollary~\ref{coro: time_varying_step_SsGD_subweibull} can reduce to the deterministic case, i.e., $O((\Delta_1+\rho G^2\log T)/\sqrt{T})$, when $\sigma = 0$ for any $\theta > 1$. When $\theta = 1/2$, the sub-Weibull-type noise reduces to sub-Gaussian noise, and our results show that when SsGD minimizes a $\rho$-weakly convex function with such noise, the iteration complexity is at most $O((\Delta_1 + (\log(1/\delta)\sigma^2 + G^2)\log T)/\sqrt{T})$ and we disregard the $\log(1/\delta)$ and $\log T$, our result aligns with the in-expectation results in \citep{Davis_2018}.

To the best of our knowledge, most convergence analysis studies on weakly convex SsGD methods focus on in-expectation results. Therefore, we first compare our findings with the SsGD method in the non-convex setting. 

Both \citep{madden2020high} and \citep{li_2022} provide high-probability upper bounds for the non-convex smooth SGD method (without projection) with sub-Weibull-type gradient noise. For $\theta \geq \frac{1}{2}$, our results share the same order of $T$ and $\log(1/\delta)$ as theirs. However, there is a key difference: \citep{madden2020high}'s work and ours both rely on the assumption of a globally bounded norm for the (sub-)gradients, whereas \citep{lishaojie_liuyong_2022} only requires $\eta_t\|\nabla f(x_t)\| \leq G$ for all $t \in [T]$. The reason for this is that in the descent lemma for $L$-smooth functions, the inner product term can be combined with the square norm of the gradients to form the descent term. In our analysis, however, the objective function is weakly convex, and we use the gradients of the Moreau envelopes to measure convergence. Under weakly convex optimization, it is difficult to remove the $G$-Lipschitz condition (like Assumption~\ref{assp:G_Lipsch}) on $f$.

Note that in Corollary~\ref{coro: time_varying_step_SsGD_subweibull}, we adopt a parameter-free and time-varying step-size. It is natural to consider fixing the time-horizon $T$ and assuming the algorithm has access to problem parameters (e.g., $\Delta_1$, $G$, and $\sigma$) to refine the upper bound in Corollary~\ref{coro: time_varying_step_SsGD_subweibull}.

\begin{corollary}\label{coro: fix_step_SsGD_subweibull}
     Suppose Assumption~\ref{assp:weak_conv}, \ref{assp:lower_bound}, \ref{assp:G_Lipsch}, \ref{assp:unbiased} and \ref{assp:sub_weibull} hold. Let $\{x_t\}$ be the iterate produced by projection-SsGD. Let $f_{1/\bar{\rho}(x_t)}$ is defined as~\eqref{eq: fun_moreau_envelope} and set $\bar{\rho}=3\rho$. Let we fix the time-horizon $T$ and assume that the algorithm can access the parameters $\Delta_1$, $G$, and $\sigma$. Then given any $\delta\in(0,1)$, with the probability at least $1-\delta$
    \begin{itemize}
        \item when $\theta=\frac{1}{2}$, Let $\eta_t\equiv \Theta\left(\sqrt{\frac{\Delta_1}{\rho(\log(1/\delta)\sigma^2+G^2)T}}\right)$ we have following inequality
            \begin{align}
                \frac{1}{T}\sum_{t=1}^T\|\nabla f_{1/3\rho}(x_t)\|^2\leq O\left(\sqrt{\frac{\rho\Delta_1(\log(1/\delta)\sigma^2+G^2)}{T}}+\frac{\log(1/\delta)\sigma^2}{T}\right) 
            \end{align}
        \item when $\theta =(\frac{1}{2},1]$. Let $\eta_t \equiv \Theta\left(\sqrt{\frac{\Delta_1}{\rho(\log(1/\delta)^{2\theta}\sigma^2+G^2)T}}\right)$, we have following inequality 
        \begin{align}
            \frac{1}{T}\sum_{t=1}^T \|\nabla f_{1/3\rho}(x_t)\|^2\leq O&\left(\sqrt{\frac{\rho\Delta_1(\log(1/\delta)^{2\theta}\sigma^2+G^2)}{T}}+\frac{\max\{\sigma^2,G\sigma\}\log(1/\delta)}{T}\right)
        \end{align}
        \item when $\theta>1$ and $\eta_t \equiv  \Theta\left(\sqrt{\frac{\Delta_1}{\rho((\theta\log(1/\delta))^{2\theta}+G^2)}}\right)$, we have following inequality
        \begin{align}
            \frac{1}{T}\sum_{t=1}^T \|\nabla f_{1/3\rho}(x_t)\|^2\leq O&\left(\sqrt{\frac{\rho\Delta_1((\theta\log(1/\delta))^{2\theta}\sigma^2+G^2)}{T}}+\frac{D(\theta)\log(1/\delta)}{T}\right)
        \end{align}
    \end{itemize}
\end{corollary}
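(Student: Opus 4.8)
\textbf{Proof proposal for Corollary~\ref{coro: fix_step_SsGD_subweibull}.}
The plan is to obtain the corollary as a direct specialization of Theorem~\ref{thm: SsGD} to the constant step-size $\eta_t\equiv\eta$ on the fixed horizon $T$, followed by an AM--GM balancing in $\eta$. First I would note that for $\eta_t\equiv\eta$ one has $\sum_{t=1}^T\eta_t=T\eta$, $\sum_{t=1}^T\eta_t^2=T\eta^2$ and $\max_{t\in[T]}\eta_t=\eta$, so the weighted average on the left-hand side of Theorem~\ref{thm: SsGD} collapses to the uniform average $\frac1T\sum_{t=1}^T\|\nabla f_{1/3\rho}(x_t)\|^2$. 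Substituting these into the three cases of Theorem~\ref{thm: SsGD} reduces each bound to the single generic form
\begin{align*}
\frac1T\sum_{t=1}^T\|\nabla f_{1/3\rho}(x_t)\|^2 \;\le\; O\!\left(\frac{C_\delta}{T} + \frac{\Delta_1}{T\eta} + \rho\,B_\delta\,\eta\right),
\end{align*}
where, according to $\theta$, the pair $(B_\delta,C_\delta)$ is $\big(\log(1/\delta)\sigma^2+G^2,\ \log(1/\delta)\sigma^2\big)$ for $\theta=\tfrac12$, is $\big([\log(1/\delta)]^{2\theta}\sigma^2+G^2,\ \max\{\sigma^2,\sigma G\}\log(1/\delta)\big)$ for $\theta\in(\tfrac12,1]$, and is $\big((\theta\log(1/\delta))^{2\theta}\sigma^2+G^2,\ D(\theta)\log(1/\delta)\big)$ for $\theta>1$. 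Here I would invoke the non-asymptotic estimate established inside the proof of Theorem~\ref{thm: SsGD}, which is valid for an arbitrary positive step-size schedule; the requirement $\sum_{t}\eta_t=+\infty$ in the theorem statement is only used to make the right-hand side vanish as $T\to\infty$ and is irrelevant once $T$ is fixed.

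Next I would optimize the right-hand side over $\eta>0$. The $C_\delta/T$ term does not depend on $\eta$, while the two remaining terms $\Delta_1/(T\eta)$ and $\rho B_\delta\eta$ are balanced, by AM--GM, at $\eta^\star=\sqrt{\Delta_1/(\rho B_\delta T)}$ — which is exactly (up to constants, and with the natural identification of $B_\delta$) the step-size prescribed in each of the three bullets — giving $\Delta_1/(T\eta^\star)+\rho B_\delta\eta^\star = 2\sqrt{\rho\Delta_1 B_\delta/T}$. Retaining the residual $C_\delta/T$ unchanged yields
\begin{align*}
\frac1T\sum_{t=1}^T\|\nabla f_{1/3\rho}(x_t)\|^2 \;\le\; O\!\left(\sqrt{\frac{\rho\,\Delta_1\,B_\delta}{T}} + \frac{C_\delta}{T}\right),
\end{align*}
and plugging in the three choices of $(B_\delta,C_\delta)$ reproduces verbatim the three displayed inequalities of Corollary~\ref{coro: fix_step_SsGD_subweibull}. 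Feasibility of $\eta^\star$ is immediate: $\Delta_1=f_{1/3\rho}(x_1)-\min_\X f\ge 0$ by Assumption~\ref{assp:lower_bound} together with $f(\hat x)\le f(x)$, and $B_\delta,T>0$; the access to $\Delta_1,G,\sigma$ postulated in the statement is precisely what is needed to instantiate $\eta^\star$, and the failure probability $\delta$ only enters $B_\delta,C_\delta$ as fixed constants, so the balancing is legitimate.

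The step I expect to require the most care — and essentially the only non-mechanical point — is justifying that the quantitative bound of Theorem~\ref{thm: SsGD} may be applied with the constant schedule, since $\sum_{t=1}^T\eta_t=T\eta<+\infty$ does not literally meet the divergence hypothesis written in the theorem. I would resolve this by citing the finite-horizon inequality that the proof of Theorem~\ref{thm: SsGD} actually derives (which is phrased for an arbitrary finite step-size sequence), observing that the $\sum_t\eta_t=+\infty$ condition there only controls the limiting statement. Beyond that, the argument is a one-line substitution plus a textbook AM--GM trade-off, so I anticipate no further obstacles.
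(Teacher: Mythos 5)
Your proposal is correct and follows essentially the same route as the paper: the authors likewise specialize the finite-horizon bound of Theorem~\ref{thm: SsGD} (in its full form, Theorem~\ref{app_thm:full_version_projSsGD_general_thm}) to a constant step-size, so that $\sum_t\eta_t=T\eta$, $\sum_t\eta_t^2=T\eta^2$, $\max_t\eta_t=\eta$, and then choose $\eta$ to balance the $\Delta_1/(T\eta)$ and $\rho B_\delta\eta$ terms, leaving the $C_\delta/T$ residual untouched. Your remark that the hypothesis $\sum_t\eta_t=+\infty$ is not literally met but that the underlying finite-horizon inequality holds for any positive schedule is a valid (and welcome) clarification that the paper itself glosses over.
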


When $\sigma = 0$, the results in Corollary~\ref{coro: fix_step_SsGD_subweibull} reduce to the deterministic case $O(\sqrt{(\rho\Delta_1 G^2)/T})$ with $\eta_t \equiv \Theta(\sqrt{\Delta_1/(\rho G^2T)})$. For $\theta = 1/2$, if we ignore the $\log(1/\delta)$ factor, the iteration complexity aligns with the in-expectation result in \citep{Davis_2018}.

Interestingly, when considering $\theta = 1/2$, our result is quite similar to that in \citep{liuzijian_2023}, which analyzes the high probability convergence of  SsGD for convex and $G$-Lipschitz functions with an unbounded constrained domain $\X$ (see Theorem 3.1 in \citep{liuzijian_2023}). 

Notably, even with a constant step-size and fixed time-horizon $T$, the $\log(T/\delta)$ term emerges for $\theta > 1$. As previously discussed, this stems from our use of the sub-Weibull Freedman inequality, perhaps suggesting that the $\log(T/\delta)$ dependency is inherent to sub-Weibull-type gradient noise rather than an artifact of the analysis technique.

\subsubsection{Proof Sketch} 
In this subsection, we present a concise overview of our proof for the weakly convex SsGD method under sub-Weibull-type noise. For conveniently analysis the stochastic process, we denote $\mathcal{F}_t\triangleq \sigma\left(g_1,\dots,g_{t}\right)$, where $\mathcal{F}_t$ is defined on the same probability space $(\Omega,\mathcal{F},P)$. $\E_t[\cdot]$ denotes the conditional expectation given $\mathcal{F}_{t-1}$. We first start with a fundemantal lemma for weakly convex SsGD method.
\begin{lemma}\label{lem: fundemantal_lem}
    Suppose that Assumption~\ref{assp:weak_conv},\ref{assp:lower_bound}, \ref{assp:G_Lipsch} and \ref{assp:unbiased} hold. For any $t\in[T]$ and $\bar{\rho}>\rho$. Let $x_t$ is produced by the projection SsGD method, then we have
    \begin{align}\label{eq:fundemantal_eq}
        \frac{\bar{\rho}-\rho}{\bar{\rho}}\eta_t\|\nabla f_{1/\bar{\rho}}(x_t)\|^2\leq \Delta_t-\Delta_{t+1} + \bar{\rho}\eta_t\langle \hat{x}_t-x_t, \xi_t\rangle + \bar{\rho}\eta_t^2\|\xi_t\|^2 + \bar{\rho}\eta_t^2G^2,
    \end{align}
    where $\Delta_t\triangleq f_{1/\bar{\rho}}(x_t)-\min_{x\in\X} f(x)$, $f(\hat{x}_t) = f_{1/\bar{\rho}}(x_t)$, $\partial_t\triangleq\E_t[g_t]\in\partial f(x_t)$ and $\xi_t\triangleq g_t-\partial_t$.
\end{lemma}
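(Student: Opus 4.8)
The plan is to derive the one-step inequality~\eqref{eq:fundemantal_eq} by comparing the Moreau envelope value at $x_{t+1}$ with a suboptimal feasible point, namely $\hat{x}_t$ itself, and then exploiting the $\rho$-weak convexity of $f$. First I would use the definition of the Moreau envelope: since $f_{1/\brho}(x_{t+1}) = \min_{y\in\X}\{f(y) + \frac{\brho}{2}\|y - x_{t+1}\|^2\}$, plugging in $y = \hat{x}_t$ gives
\begin{align*}
    f_{1/\brho}(x_{t+1}) \le f(\hat{x}_t) + \frac{\brho}{2}\|\hat{x}_t - x_{t+1}\|^2 = f_{1/\brho}(x_t) - \frac{\brho}{2}\|\hat{x}_t - x_t\|^2 + \frac{\brho}{2}\|\hat{x}_t - x_{t+1}\|^2,
\end{align*}
where the last equality uses $f_{1/\brho}(x_t) = f(\hat{x}_t) + \frac{\brho}{2}\|\hat{x}_t - x_t\|^2$. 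Subtracting $\min_\X f$ from both sides converts this into a statement about $\Delta_{t+1} - \Delta_t$, so it remains to lower bound $\frac{\brho}{2}\|\hat{x}_t - x_t\|^2 - \frac{\brho}{2}\|\hat{x}_t - x_{t+1}\|^2$ from above appropriately (i.e. control $\|\hat{x}_t - x_{t+1}\|^2$ in terms of $\|\hat{x}_t - x_t\|^2$ and a descent-type term).

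Next I would expand $\|\hat{x}_t - x_{t+1}\|^2$ using the update rule $x_{t+1} = \mathrm{Proj}_\X(x_t - \eta_t g_t)$ together with nonexpansiveness of the projection and $\hat{x}_t\in\X$: $\|\hat{x}_t - x_{t+1}\|^2 \le \|\hat{x}_t - x_t + \eta_t g_t\|^2 = \|\hat{x}_t - x_t\|^2 + 2\eta_t\langle \hat{x}_t - x_t, g_t\rangle + \eta_t^2\|g_t\|^2$. Writing $g_t = \partial_t + \xi_t$ splits the inner product into $2\eta_t\langle \hat{x}_t - x_t, \partial_t\rangle + 2\eta_t\langle \hat{x}_t - x_t, \xi_t\rangle$. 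The noise term is left as is (it becomes the martingale term in the later concentration argument). For the first term, I use $\rho$-weak convexity of $f$: since $\partial_t\in\partial f(x_t)$, we have $f(\hat{x}_t) \ge f(x_t) + \langle \partial_t, \hat{x}_t - x_t\rangle - \frac{\rho}{2}\|\hat{x}_t - x_t\|^2$, i.e. $\langle \partial_t, \hat{x}_t - x_t\rangle \le f(\hat{x}_t) - f(x_t) + \frac{\rho}{2}\|\hat{x}_t - x_t\|^2 \le \frac{\rho}{2}\|\hat{x}_t - x_t\|^2$ because $f(\hat{x}_t)\le f(x_t)$. For $\|g_t\|^2$ I use $\|g_t\|^2 \le 2\|\partial_t\|^2 + 2\|\xi_t\|^2 \le 2G^2 + 2\|\xi_t\|^2$ via Assumption~\ref{assp:G_Lipsch}; depending on the exact constants wanted in~\eqref{eq:fundemantal_eq} one might instead keep $\|g_t\| \le G + \|\xi_t\|$ and absorb cross terms, but the displayed form with coefficients $\brho\eta_t^2\|\xi_t\|^2$ and $\brho\eta_t^2 G^2$ suggests a slightly different (tighter) bookkeeping — likely keeping $\langle\hat{x}_t - x_t, \partial_t\rangle$ and $\|\partial_t\|^2$ grouped and using $\|\hat x_t - x_{t+1}\|^2 \le \|\hat x_t - x_t\|^2 + 2\eta_t\langle \hat x_t - x_t, g_t\rangle + 2\eta_t^2(\|\partial_t\|^2 + \|\xi_t\|^2)$ but recombining the $2\eta_t\langle\hat x_t - x_t,\partial_t\rangle + 2\eta_t^2\|\partial_t\|^2$ piece; I will chase the constants to match.

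Assembling: combining the two displays, the $\frac{\brho}{2}\|\hat{x}_t - x_t\|^2$ terms partially cancel, the weak-convexity bound produces a factor $\brho\eta_t\rho\|\hat x_t - x_t\|^2 = \frac{\rho}{\brho}\eta_t\|\nab f_{1/\brho}(x_t)\|^2$ after using $\nab f_{1/\brho}(x_t) = \brho(x_t - \hat x_t)$ (so $\|\hat x_t - x_t\|^2 = \|\nab f_{1/\brho}(x_t)\|^2/\brho^2$), and the leading $\frac{\brho}{2}\cdot 2\cdot\frac{\rho}{\brho^2}\cdot(\cdot)$ bookkeeping yields exactly the coefficient $\frac{\brho - \rho}{\brho}\eta_t$ on $\|\nab f_{1/\brho}(x_t)\|^2$ on the left. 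The error/noise terms $\brho\eta_t\langle\hat x_t - x_t,\xi_t\rangle$, $\brho\eta_t^2\|\xi_t\|^2$, $\brho\eta_t^2 G^2$ land on the right as claimed, and $\Delta_t - \Delta_{t+1}$ comes from the Moreau-envelope comparison. I do not expect a genuine obstacle here — the only delicate point is matching the precise constants in the displayed inequality (in particular getting $\frac{\brho-\rho}{\brho}$ rather than, say, $\frac{\brho - 2\rho}{\brho}$, which forces the $\|g_t\|^2$ bound to be handled by grouping $\|\partial_t\|^2$ with the inner product term rather than crudely via $2G^2 + 2\|\xi_t\|^2$), so I would carry the exact algebra carefully at that step and use $f(\hat x_t)\le f(x_t)$ to discard the function-value difference at the right moment.
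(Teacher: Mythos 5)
Your overall skeleton (compare $f_{1/\bar\rho}(x_{t+1})$ with the suboptimal point $\hat x_t$, apply nonexpansiveness of the projection, expand the square, split $g_t = \partial_t + \xi_t$) matches the paper's proof, and your worry about the $\|g_t\|^2$ term is misplaced: the paper itself just uses $\tfrac{\bar\rho\eta_t^2}{2}\|\partial_t+\xi_t\|^2 \le \bar\rho\eta_t^2(\|\xi_t\|^2+G^2)$, i.e.\ exactly your ``crude'' bound. The genuine gap is in how you handle $\langle \partial_t, \hat x_t - x_t\rangle$. After the Moreau-envelope comparison and the expansion of $\|\hat x_t - x_t + \eta_t g_t\|^2$, the two $\tfrac{\bar\rho}{2}\|\hat x_t - x_t\|^2$ terms cancel \emph{exactly}, so the entire descent term must come from $\bar\rho\eta_t\langle \hat x_t - x_t,\partial_t\rangle$. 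Your bound $\langle\partial_t,\hat x_t - x_t\rangle \le f(\hat x_t)-f(x_t)+\tfrac{\rho}{2}\|\hat x_t-x_t\|^2 \le \tfrac{\rho}{2}\|\hat x_t-x_t\|^2$, obtained by discarding $f(\hat x_t)-f(x_t)$ via $f(\hat x_t)\le f(x_t)$, is a \emph{nonnegative} upper bound; with it no descent term materializes and you would only obtain the vacuous statement $-\tfrac{\rho}{2\bar\rho}\eta_t\|\nabla f_{1/\bar\rho}(x_t)\|^2 \le \Delta_t-\Delta_{t+1}+\cdots$, not the claimed inequality with the positive coefficient $\tfrac{\bar\rho-\rho}{\bar\rho}$ on the left. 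This is not a constant-chasing issue; the sign comes out wrong.

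The missing idea is that $f(\hat x_t)-f(x_t)$ must be kept and bounded quantitatively. The prox subproblem $y\mapsto f(y)+\mathbb{I}_{\mathcal{X}}(y)+\tfrac{\bar\rho}{2}\|y-x_t\|^2$ is $(\bar\rho-\rho)$-strongly convex and is minimized at $\hat x_t$, so
\begin{align*}
f(x_t)-\Bigl(f(\hat x_t)+\tfrac{\bar\rho}{2}\|\hat x_t-x_t\|^2\Bigr)\ \ge\ \tfrac{\bar\rho-\rho}{2}\,\|\hat x_t-x_t\|^2 .
\end{align*}
Combined with the weak-convexity inequality this gives $\langle\partial_t,\hat x_t-x_t\rangle \le -(\bar\rho-\rho)\|\hat x_t-x_t\|^2 = -\tfrac{\bar\rho-\rho}{\bar\rho^2}\|\nabla f_{1/\bar\rho}(x_t)\|^2$, and multiplying by $\bar\rho\eta_t$ produces exactly the descent term $-\tfrac{\bar\rho-\rho}{\bar\rho}\eta_t\|\nabla f_{1/\bar\rho}(x_t)\|^2$ required by the lemma. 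The inequality $f(\hat x_t)\le f(x_t)$ is only the qualitative shadow of this strong-convexity gap; invoking it alone throws away precisely the quantity that drives the descent.
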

We note that this lemma is analogous to the descent lemma for $L$-smooth functions. However, a key distinction lies in the inner product term: here it is $\langle \hat{x_t} - x_t \rangle$, rather than $\langle x_{t+1} - x_t, \xi_t \rangle$. The similar result as \eqref{eq:fundemantal_eq} can be found  in several literatures analyzing the in-expectation convergence of the SsGD method for weakly convex optimization \citep{Davis_2018,Davis2_2018,Alacaoglu_2020,quanqihu_2024}. Unlike in-expectation convergence analysis, we cannot ignore the martingale $\bar{\rho}\eta_t\langle \hat{x_t}-x_t,\xi_t\rangle$ (i.e., $\E_t[\bar{\rho}\eta_t\langle \hat{x_t}-x_t,\xi_t\rangle]=0$) and must use concentration inequalities to estimate its probability upper bound. Note that \cite{bakhshizadeh_2022} has proven that a sub-Weibull random variable $X$ does not have a moment generating function (MGF) when the tail parameter $\theta>1$, implying that establishing concentration inequalities for the $X$ is challenging. Fortunately, \cite{madden_2024} developed a Bernstein-type concentration inequality (see Theorem 11 in \citep{madden_2024}), which can be regarded as an extension of the Freedman sub-Gaussian inequality. The following lemma can be directly derived from this concentration inequality.
\begin{lemma}\label{lem:sub_weibull_freedman_ineq}
    Given a filtered probability space $(\Omega,\mathcal{F},(\mathcal{F}_t),P)$. Let $\{X_t\}$ and $\{\sigma_t\}$ be adapted to $\mathcal{F}_t\subset \mathcal{F}$, where for any $t\in[T]$, $\E[X_t|\mathcal{F}_{t-1}]=0$ and $\sigma_t$ is nonnegative almost surely. Suppose that $X_t$ is $\sigma_{t-1}$ - sub-Weibull($\theta$) with $\theta\geq\frac{1}{2}$ and $\sigma_{t-1}\leq M_t$ when $\theta>\frac{1}{2}$. Then $\forall \alpha\geq b\max_{t\in[T]}M_t$, the following inequality holds with the probability at least $1-\delta$.
    \begin{align}\label{eq:sub_weibull_freedman_ineq}
    \sum_{t=1}^T X_t \leq 2\alpha\log(\frac{2}{\delta}) + \frac{a}{\alpha}\sum_{t=1}^T \sigma_{t-1}^2, 
\end{align}
where $a$ and $b$ are defined as following:
\begin{align}
    a &= \left\{\begin{aligned}
        &2,\quad \theta = \frac{1}{2}\\
        &(4\theta)^{2\theta}e^2,\quad \theta\in(\frac{1}{2},1]\\
        &(2^{2\theta+1}+2)\Gamma(2\theta+1) + \frac{2^{3\theta}\Gamma(3\theta+1)}{3\log(4T/\delta)^{\theta-1}}, \quad \theta>1
    \end{aligned}\right.\\
    b &= \left\{\begin{aligned}
        &0,\quad \theta = \frac{1}{2}\\
        &(4\theta)^\theta,\quad \theta \in (\frac{1}{2},1]\\
        &2\log(4T/\delta)^{\theta-1},\quad \theta > 1
    \end{aligned}\right.
\end{align}
\end{lemma}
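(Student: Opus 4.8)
\textbf{Proof plan for Lemma~\ref{lem:sub_weibull_freedman_ineq}.}
The plan is to obtain \eqref{eq:sub_weibull_freedman_ineq} as a direct corollary of the Bernstein-type concentration inequality for martingale sums with sub-Weibull increments (Theorem~11 in \citep{madden_2024}), by optimizing the free parameter in that bound and packaging the tail-parameter-dependent constants into the quantities $a$ and $b$. First I would recall the precise statement of the cited inequality: for a martingale difference sequence $\{X_t\}$ adapted to $\{\mathcal{F}_t\}$ with $X_t$ conditionally $\sigma_{t-1}$-sub-Weibull($\theta$), there exist constants $c_\theta$ (of the form $(4\theta)^{2\theta}e^2$ for $\theta\in(1/2,1]$, and involving $\Gamma(2\theta+1)$, $\Gamma(3\theta+1)$ for $\theta>1$) and $d_\theta$ (zero for $\theta=1/2$, $(4\theta)^\theta$ for $\theta\in(1/2,1]$, and logarithmic in $T/\delta$ for $\theta>1$) such that, conditioning on the event that $\sum_{t=1}^T\sigma_{t-1}^2\le V$ and $\max_t\sigma_{t-1}\le M$, the sum $\sum_{t=1}^T X_t$ exceeds $\sqrt{2c_\theta V\log(2/\delta)}+d_\theta M\log(2/\delta)$ with probability at most $\delta$. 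The key algebraic observation is that $\sqrt{2c_\theta V \log(2/\delta)}\le \alpha\log(2/\delta)+\tfrac{c_\theta}{2\alpha}V$ by the AM--GM inequality $\sqrt{uv}\le (u+v)/2$ applied with $u=\alpha\log(2/\delta)$ and $v=c_\theta\log(2/\delta) V/\alpha$; this is exactly the standard trick that converts a square-root Bernstein bound into the parameter-$\alpha$ form appearing in the Freedman-type inequalities of \citep{lixiaoyu_2020}.

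The main steps, in order, are: (i) apply Theorem~11 of \citep{madden_2024} to $\{X_t\}$; (ii) use the stated almost-sure bound $\sigma_{t-1}\le M_t$ to control the "max" term by $\max_{t\in[T]}M_t$, and absorb the linear-in-$M$ contribution $d_\theta M\log(2/\delta)$ into the term $2\alpha\log(2/\delta)$ under the hypothesis $\alpha\ge b\max_{t\in[T]}M_t$ — concretely, choosing $b\ge d_\theta$ ensures $d_\theta M\log(2/\delta)\le \alpha\log(2/\delta)$; (iii) apply the AM--GM split above to the square-root term, which produces the $\alpha\log(2/\delta)$ piece (adding to the previous one to give the coefficient $2$) and the $\tfrac{c_\theta}{2\alpha}\sum_t\sigma_{t-1}^2$ piece; (iv) read off $a$ so that $a\ge c_\theta/2$ with the exact constants quoted, and similarly fix $b$; the $\theta>1$ adjustments (the $\tfrac{2^{3\theta}\Gamma(3\theta+1)}{3\log(4T/\delta)^{\theta-1}}$ correction in $a$ and the $2\log(4T/\delta)^{\theta-1}$ form of $b$) come from the particular way \citep{madden_2024} states the heavy-tail regime, where the ``linear'' term is itself of Bernstein type with a cubic correction. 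For the $\theta=1/2$ case one simply notes $d_\theta=0$, so no constraint on $\alpha$ beyond positivity is needed and $b=0$ is admissible; here the bound reduces to the classical sub-Gaussian Freedman inequality (Lemma~1 of \citep{lixiaoyu_2020}) with $a=2$.

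The main obstacle I anticipate is bookkeeping rather than conceptual: Theorem~11 of \citep{madden_2024} is stated with its own normalization of the sub-Weibull parameter and with constants that are not literally in the $a,b$ form above, so the work is in carefully matching the $\theta$-dependent constants — in particular verifying that the $\Gamma(3\theta+1)$-type term in the heavy-tailed Bernstein bound, after dividing by the $\log(4T/\delta)^{\theta-1}$ factor that appears when one factors the cubic term against the quadratic one, lands exactly on the claimed expression for $a$, and that the threshold condition ``$\alpha\ge b\max_t M_t$'' is exactly what is needed to kill the stray linear term without leaving a residue. A secondary subtlety is the use of $\sigma_{t-1}$ (rather than $\sigma_t$) as the conditioning variance and its $\mathcal{F}_{t-1}$-measurability, which must line up with the predictability hypothesis of the cited theorem; since $\{\sigma_t\}$ is assumed adapted and we only ever use $\sigma_{t-1}$, which is $\mathcal{F}_{t-1}$-measurable, this is consistent, but it should be stated explicitly so the application of Theorem~11 is clean.
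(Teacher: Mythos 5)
Your proposal rests on a misremembered form of the cited concentration inequality, and this creates a genuine gap. Theorem 11 of \citep{madden_2024} (restated as Theorem \ref{app_thm:sub_weibull_freedman_ineq} in the appendix) is not a square-root Bernstein bound under a deterministic variance budget $V$; it is already stated in the self-normalized, parametrized form
\[
P\Bigl(\bigcup_{\tau\in[T]}\Bigl\{\textstyle\sum_{t=1}^\tau X_t\geq x\ \text{and}\ a\sum_{t=1}^\tau\sigma_{t-1}^2\leq\alpha\sum_{t=1}^\tau X_t+\beta\Bigr\}\Bigr)\leq \exp(-\lambda x+2\lambda^2\beta)+2\varepsilon,
\]
valid for $\alpha\geq b\max_{t\in[T]}M_t$ and $\lambda\in[0,1/(2\alpha)]$. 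The paper's proof is then a one-line event-inclusion argument: the event $\{\sum_t X_t>x+\frac{a}{\alpha}\sum_t\sigma_{t-1}^2\}$ is contained in $\{\sum_t X_t\geq x\ \text{and}\ a\sum_t\sigma_{t-1}^2\leq\alpha\sum_t X_t\}$, and choosing $\beta=0$, $\lambda=1/(2\alpha)$, $x=2\alpha\log(2/\delta)$, $\varepsilon=\delta/4$ yields failure probability $\delta/2+\delta/2=\delta$. No AM--GM step is involved, and the condition $\alpha\geq b\max_t M_t$ is a hypothesis of the cited theorem itself (needed for the conditional MGF-type bound to hold on $[0,1/(2\alpha)]$), not a device for absorbing a stray linear term.

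The reason this matters, rather than being mere bookkeeping: the lemma's conclusion contains the \emph{random} quantity $\sum_{t=1}^T\sigma_{t-1}^2$ on the right-hand side. Your route---apply a Bernstein bound on the event $\sum_t\sigma_{t-1}^2\leq V$ for a deterministic $V$, then split $\sqrt{2c_\theta V\log(2/\delta)}$ by AM--GM---can only place a deterministic quantity (e.g.\ $\sum_t M_t^2$) in that position, unless you add a peeling/union-bound argument over a grid of $V$ values, which you do not describe and which would cost additional logarithmic factors. Keeping the random sum is not cosmetic: in the downstream application (proof of Theorem \ref{app_thm:full_version_projSsGD_general_thm}) one has $\sigma_{t-1}^2=\sigma^2\eta_t^2\|\nabla f_{1/\bar\rho}(x_t)\|^2$, and the term $\frac{a}{\alpha}\sum_t\sigma_{t-1}^2$ is absorbed into the descent term on the left-hand side by the choice of $\alpha$; replacing it by its worst-case bound would alter that argument. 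As written, your plan proves a weaker statement with $\sum_t M_t^2$ in place of $\sum_t\sigma_{t-1}^2$, not the lemma as stated. (A secondary quantitative slip: your AM--GM split bounds $\log(2/\delta)\sqrt{c_\theta V}$, which dominates $\sqrt{2c_\theta V\log(2/\delta)}$ only when $\log(2/\delta)\geq 2$, so even that step needs a small repair.)
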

Note that $\bar{\rho}\eta_t\langle x_{t+1} - x_t, \xi_t \rangle$ is $\bar{\rho}\eta_t\|\hat{x_t}-x_t\|\sigma$-sub-Weibull($\theta$). To apply Lemma \ref{lem: fundemantal_lem} to $\bar{\rho}\eta_t\langle x_{t+1} - x_t, \xi_t \rangle$, we additionally require $\bar{\rho}\eta_t\|\hat{x_t}-x_t\|\sigma$ to be upper-bounded by $M_t$ when $\theta>1/2$. Although the constrained domain $\X$ is unbounded, we establish have the following lemma.
\begin{lemma}\label{lem: bound_norm_hx_x}
     Given a $\rho$ - weakly funciton $F$, $\forall \bar{\rho} > \rho$, let $f_{1/\bar{\rho}} (x) $ be a moreau envelope function  of $f(x) $. Then $\forall g \in \partial f(x)$, we have
        \begin{align}~\label{eqn: bound_norm_hx_x}
            \|\hat{x}-x\| \leq \frac{2\|g\|}{\bar{\rho}-\rho}
        \end{align}
\end{lemma}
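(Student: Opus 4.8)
\textbf{Proof plan for Lemma~\ref{lem: bound_norm_hx_x}.}
The plan is to exploit the first-order optimality condition for the minimization problem defining $\hat{x}$ together with the convexity that weak convexity provides after adding a quadratic. Recall $\hat{x} = \argmin_{y}\{f(y) + \frac{\brho}{2}\|y-x\|^2\}$ (ignoring the indicator $\mathbb{I}_\X$ for now, or folding it into $f$). Since $\hat x$ is a minimizer, $0$ belongs to the Fréchet subdifferential of $y\mapsto f(y)+\frac{\brho}{2}\|y-x\|^2$ at $\hat x$, which gives a subgradient $\hat g \in \partial f(\hat x)$ with $\hat g + \brho(\hat x - x) = 0$, i.e. $\brho(x-\hat x) = \hat g$. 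Hence $\|\hat x - x\| = \|\hat g\|/\brho \le G/\brho$; but this uses Assumption~\ref{assp:G_Lipsch}, whereas the lemma is stated to hold for \emph{any} $g\in\partial f(x)$ with the bound $2\|g\|/(\brho-\rho)$, so I should instead compare $\hat x$ against the given point $x$ more carefully.

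The key steps, in order: (1) Write the optimality condition at $\hat x$ to obtain $\hat g := \brho(x-\hat x)\in\partial f(\hat x)$. (2) Use the defining property of $\rho$-weak convexity: $f + \frac{\rho}{2}\|\cdot\|^2$ is convex, so for its subgradients we have the monotonicity / subgradient inequality between the points $x$ and $\hat x$. Concretely, if $g\in\partial f(x)$ then $g + \rho x \in \partial(f+\frac\rho2\|\cdot\|^2)(x)$ and similarly $\hat g + \rho\hat x$ is a subgradient at $\hat x$; monotonicity of the subdifferential of a convex function gives
\[
\langle (g+\rho x) - (\hat g + \rho \hat x),\; x - \hat x\rangle \ge 0.
\]
(3) Substitute $\hat g = \brho(x-\hat x)$ into this inequality and rearrange: the left side becomes $\langle g - \hat g,\, x-\hat x\rangle + \rho\|x-\hat x\|^2 = \langle g,\,x-\hat x\rangle - \brho\|x-\hat x\|^2 + \rho\|x-\hat x\|^2 \ge 0$, i.e. $(\brho-\rho)\|x-\hat x\|^2 \le \langle g, x-\hat x\rangle \le \|g\|\,\|x-\hat x\|$. (4) Divide by $\|x-\hat x\|$ (the case $x=\hat x$ being trivial) to conclude $\|x-\hat x\| \le \|g\|/(\brho-\rho)$, which is even sharper than the claimed $2\|g\|/(\brho-\rho)$; the looser constant $2$ in the statement presumably leaves room for handling the indicator term $\mathbb{I}_\X$ via an extra normal-cone subgradient, which only changes constants.

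The main obstacle I anticipate is the bookkeeping with the Fréchet (rather than convex) subdifferential and the indicator $\mathbb{I}_\X$: one must verify that the sum rule $\partial(f+\mathbb{I}_\X+\frac{\brho}{2}\|\cdot-x\|^2)$ behaves as expected at the minimizer $\hat x$, and that the subgradient $\hat g$ extracted at $\hat x$ can legitimately be paired with an arbitrary $g\in\partial f(x)$ through the monotonicity inequality for the convexified function $f+\frac\rho2\|\cdot\|^2$. Since $f$ is $\rho$-weakly convex and $\X\subseteq\intdom(\dom f)$, these calculus rules hold and the Fréchet subdifferential of $f+\frac\rho2\|\cdot\|^2$ coincides with its convex subdifferential; handling the normal cone $N_\X(\hat x)$ term contributes a subgradient pointing "outward," which is where the factor of $2$ comfortably absorbs the extra term. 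Once these regularity points are dispatched, the argument is a two-line monotonicity computation.
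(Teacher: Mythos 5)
Your proposal is correct, but it takes a genuinely different route from the paper. The paper's proof never touches the optimality condition at $\hat{x}$: it simply compares objective values in the definition of $\hat{x}$ as a minimizer, namely $f(\hat{x})+\frac{\bar{\rho}}{2}\|\hat{x}-x\|^2\leq f(x)$ (valid since $x\in\X$ kills the indicator), and then invokes the weak-convexity subgradient inequality $f(\hat{x})-f(x)\geq\langle g,\hat{x}-x\rangle-\frac{\rho}{2}\|\hat{x}-x\|^2$ for the given $g\in\partial f(x)$. Subtracting yields $\frac{\bar{\rho}-\rho}{2}\|\hat{x}-x\|^2\leq\langle g,x-\hat{x}\rangle\leq\|g\|\,\|x-\hat{x}\|$, which is exactly where the factor $2$ comes from. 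Your argument instead extracts $\hat{g}=\bar{\rho}(x-\hat{x})\in\partial f(\hat{x})$ (plus a normal-cone term) from first-order optimality and applies monotonicity of the subdifferential of the convexified function $f+\frac{\rho}{2}\|\cdot\|^2$; the normal-cone contribution in fact has the favorable sign ($\langle n,x-\hat{x}\rangle\leq0$ for $x\in\X$), so you genuinely obtain the sharper bound $\|\hat{x}-x\|\leq\|g\|/(\bar{\rho}-\rho)$ rather than merely absorbing it into the constant. The trade-off is that your route needs the Fréchet/convex sum rule and a constraint qualification to decompose the optimality condition at $\hat{x}$, whereas the paper's value-comparison argument requires no subdifferential calculus at $\hat{x}$ at all and handles the indicator trivially; since only the order of the constant matters downstream, the paper opts for the more elementary argument.
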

Combining \ref{lem: bound_norm_hx_x} and Assumption \ref{assp:G_Lipsch}, we can upper-bound $\bar{\rho}\eta_t\|\hat{x_t}-x_t\|\sigma$. This allows us to apply the concentration inequality \eqref{eq:sub_weibull_freedman_ineq} to bound $\bar{\rho}\eta_t\langle x_{t+1} - x_t, \xi_t \rangle$ by $O(\eta_t^2\|\nabla f_{1/\bar{\rho}}(x_t)\|^2)$, which differs from the left-hand side of \eqref{eq:fundemantal_eq} as a descent term. By telescoping the sum, $\Delta_t - \Delta_{t+1}$ can be upper-bounded by $\Delta_1$. Although $\|\xi_t\|^2$ is not a martingale, its summation can be controlled using a appropriate concentration inequalities (see lemma \ref{app_lem: berenstain_type_ineq}).

\subsection{clipped-SsGD under the $p$-BCM bounded noises}\label{sec:Cipped-SsGD_p_BCM}

In this section, we present our high-probability convergence analysis for the weakly convex clipped-SsGD method under $p$-th moment type noise, as specified in Assumption \ref{assp:p_PCM}. The clipping technique is introduced because existing works have shown that the vanilla SsGD method may diverge when $p \in (1, 2)$ under Assumption \ref{assp:p_PCM}.

\begin{algorithm*}[t]
    \caption{Projected SsGD method with clipping}\label{algo:ProjectedClippedSsGD}
    \begin{algorithmic}[1]
       \STATE \textbf{Input}: $x_0 \in \X$, batch size $B>0$, $\eta_0>0$, $\lambda_t > 0$.
       \FOR{$t=1,\cdots,T$}
       \STATE Sample $\mathbf{\zeta}_{1,t},\cdots,\mathbf{\zeta}_{B,t}$ independently from the distribution $\mathcal{D}_\zeta$
       \STATE Access to unbiased stochastic gradient oracle to get $g(x_t,\zeta_{1,t}),\cdots, g(x_t,\zeta_{B,t})$ and compute $\widetilde{g}_t = \frac{1}{B}\sum_{i=1}^B g(x_t,\zeta_{i,t})$.
       \STATE $g_t = \min\left\{\frac{\lambda_t}{\|\tilde{g}_t\|},1\right\}\widetilde{g}_t$.
       \STATE $x_{t+1} = \mathrm{Proj}_\X (x_t - \eta_t g_t)$
       \ENDFOR
    \end{algorithmic}
\end{algorithm*}
Algorithm \ref{algo:ProjectedClippedSsGD} illustrates the update rule for the projected clipped-SsGD method. The sole difference from the standard SsGD method is that we manually constrain the norm of the stochastic gradient to be below the clipping level $\lambda_t$. Now we give a high probabability upper bound of projected SsGD method with clipping for minimizing weakly convex functions with Assumption ~\ref{assp:p_PCM}.
\begin{theorem}\label{thm:projected_ssgd_pcm_HPB_anytime}
    Suppose that Assumptions \ref{assp:weak_conv},\ref{assp:lower_bound}, \ref{assp:G_Lipsch}, \ref{assp:unbiased} and \ref{assp:p_PCM} hold. For any $\lambda,\eta_0\in\mathbb{R}_+$, we let $\lambda_t=\max\{2G,\lambda t^{\frac{1}{p}}\}$ and $\eta_t=\eta_0\min\left\{\frac{1}{\lambda_t}\frac{1}{G\sqrt{t}}\right\}$. Then for  $\bar{\rho}=2\rho$ and any batch-size $B\in\mathbb{N}\setminus\{0\}$, the following inequality holds with the probability at least $1-\delta$
    \begin{align}
    \frac{1}{T}\sum_{t=1}^T\|\nabla f_{1/2\rho}(x_t)\|^2 = 
        O&\left(\left\{\frac{\Delta_1}{\eta_0}+(G+\rho\eta_0)\left((\sigma/\lambda)^pB^{1-p}\log (T)+\log(1/\delta)\right)\right\}\cdot\max\left\{\frac{\lambda}{T^{\frac{p-1}{p}}},\frac{G}{\sqrt{T}}\right\}\right)
    \end{align}
When $\sigma$ is known, let $\lambda = \sigma$ and set $B\equiv1$, we have
\begin{align}
    \frac{1}{T}\sum_{t=1}^T\|\nabla f_{1/2\rho}(x_t)\|^2= O\left(\left\{\frac{\Delta_1}{\eta_0}+(G+\rho\eta_0)\log (T/\delta)\right\}\cdot\max\left\{\frac{\sigma}{T^{(p-1)/p}},\frac{G}{\sqrt{T}}\right\}\right)
\end{align}
where $\Delta_1\triangleq f_{1/2\rho}(x_1)-\min_{x\in\X} f(x)$.
\end{theorem}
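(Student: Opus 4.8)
The plan is to adapt the weakly-convex descent machinery of Lemma~\ref{lem: fundemantal_lem} to the clipped setting, where the main new difficulty is that clipping introduces \emph{bias} into the stochastic gradient. Writing $\widetilde g_t$ for the mini-batch average and $g_t$ for its clipped version, I would first decompose the clipping error as $g_t - \partial_t = \underbrace{(g_t - \widetilde g_t)}_{\text{clipping bias}} + \underbrace{(\widetilde g_t - \partial_t)}_{\text{zero-mean noise}}$, where $\partial_t \in \partial f(x_t)$ with $\|\partial_t\|\le G$. Since $\lambda_t = \max\{2G,\lambda t^{1/p}\}\ge 2G$, clipping only activates when the mini-batch noise is large, so on the event where no clipping occurs the bias vanishes; the residual bias term can be controlled by a standard $p$-BCM argument giving a bound of order $\sigma^p/(B^{p-1}\lambda_t^{p-1})$ on $\|\E_t[g_t]-\partial_t\|$, using that mini-batching reduces the effective $p$-th moment by the factor $B^{1-p}$ (via a Rosenthal/von~Bahr--Esseen type inequality for the $p$-th moment of a sum of independent centered vectors).

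Next I would reproduce the one-step inequality \eqref{eq:fundemantal_eq} but now with $g_t$ in place of the true stochastic gradient. Setting $\bar\rho=2\rho$ and using $\nabla f_{1/\bar\rho}(x_t)=\bar\rho(x_t-\hat x_t)$, the update $x_{t+1}=\mathrm{Proj}_\X(x_t-\eta_t g_t)$ together with weak convexity and Lemma~\ref{lem: bound_norm_hx_x} yields a descent inequality of the schematic form
\begin{align}
\frac{\bar\rho-\rho}{\bar\rho}\,\eta_t\|\nabla f_{1/\bar\rho}(x_t)\|^2
\le \Delta_t-\Delta_{t+1}
+ \bar\rho\eta_t\langle \hat x_t-x_t,\,\widetilde g_t-\partial_t\rangle
+ \bar\rho\eta_t\langle \hat x_t-x_t,\,g_t-\widetilde g_t\rangle
+ \bar\rho\eta_t^2\|g_t\|^2
+ \bar\rho\eta_t^2 G^2. \nonumber
\end{align}
The crucial feature here is that $\|g_t\|\le\lambda_t$ deterministically after clipping, so the quadratic term $\bar\rho\eta_t^2\|g_t\|^2$ is bounded by $\bar\rho\eta_t^2\lambda_t^2$; with the prescribed $\eta_t=\eta_0\min\{1/\lambda_t,1/(G\sqrt t)\}$ this contributes a summable, logarithmically-growing term. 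The two inner-product terms split into a genuine martingale part (from $\widetilde g_t-\partial_t$ projected after clipping) and a bias part (from $g_t-\widetilde g_t$), the latter bounded in expectation by $\eta_t\|\hat x_t-x_t\|\cdot\|\E_t[g_t-\widetilde g_t]\|$ using the $p$-BCM bias estimate above.

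I would then telescope over $t=1,\dots,T$, bounding $\sum_t(\Delta_t-\Delta_{t+1})\le\Delta_1$, and apply a Freedman-type concentration inequality to the martingale-difference sequence arising from the clipped zero-mean noise. Because the clipped increments are bounded by $\lambda_t$, a standard Bernstein/Freedman bound (rather than the sub-Weibull version of Lemma~\ref{lem:sub_weibull_freedman_ineq}) suffices and produces the $\log(T/\delta)$ dependence on the failure probability. Collecting the deterministic $\bar\rho\eta_t^2\lambda_t^2$ terms, the $p$-BCM bias terms of order $(\sigma/\lambda)^p B^{1-p}$, and the concentration term of order $\log(1/\delta)$, then dividing by $\sum_t\eta_t\gtrsim \min\{T^{(p-1)/p}/\lambda,\;\sqrt T/G\}^{-1}$, gives the stated rate; the specialization $\lambda=\sigma$, $B=1$ follows by direct substitution and absorbing the $\log T$ factor into $\log(T/\delta)$.

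The main obstacle I anticipate is \emph{simultaneously} controlling the bias and the variance of the clipped gradient under the weak-convexity measure. Unlike the smooth case, the descent is measured through the Moreau envelope, so the clipping bias couples to $\|\hat x_t-x_t\|$, which I can bound only via Lemma~\ref{lem: bound_norm_hx_x} by $2\|g\|/(\bar\rho-\rho)$; I must ensure this coupling does not force the Lipschitz constant $G$ to reappear in a way that degrades the $\delta$-dependence. Balancing the clipping threshold $\lambda_t\sim t^{1/p}$ against the step-size so that the clipping bias, the deterministic $\eta_t^2\lambda_t^2$ term, and the concentration term all remain of the same order is the delicate calibration that drives the final $\widetilde O(\varepsilon^{-2p/(p-1)})$ sample complexity.
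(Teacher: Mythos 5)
Your high-level architecture (the Moreau-envelope descent lemma, a bias/variance split of the clipping error, Freedman's inequality, telescoping) matches the paper's, but two specific steps as written do not go through.

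First, your decomposition $g_t-\partial_t=(g_t-\widetilde g_t)+(\widetilde g_t-\partial_t)$ puts the martingale structure and the boundedness in \emph{different} pieces, and that is fatal under $p$-BCM noise. The zero-mean piece $\widetilde g_t-\partial_t$ is the \emph{unclipped} mini-batch noise: its increments are unbounded and its conditional variance can be infinite when $p<2$, so the ``standard Bernstein/Freedman bound'' you invoke for ``the clipped increments'' does not apply to it — nothing in that term has been clipped. Meanwhile the piece $g_t-\widetilde g_t$ is neither zero-mean nor bounded (its norm is $\max\{0,\|\widetilde g_t\|-\lambda_t\}$), and bounding it ``in expectation'' by $\eta_t\|\hat x_t-x_t\|\,\|\E_t[g_t-\widetilde g_t]\|$ does not yield a high-probability statement. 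The paper's (and Cutkosky--Mehta's) decomposition is $\xi_t=\xi_t^u+\xi_t^b$ with $\xi_t^u\triangleq g_t-\E_t[g_t]$ and $\xi_t^b\triangleq\E_t[g_t]-\partial_t$: then $\xi_t^u$ is simultaneously a martingale difference \emph{and} deterministically bounded by $2\lambda_t$ (so Freedman applies, with conditional variance controlled by $\E_t\|\xi_t^u\|^2\le O(\sigma^p\lambda_t^{2-p}B^{1-p})$ from Lemma~\ref{lem:key_lemma_clipped_SsGD}), while $\xi_t^b$ is $\mathcal{F}_{t-1}$-measurable and bounded by $O(\sigma^p\lambda_t^{1-p}B^{1-p})$, so its sum is handled deterministically.

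Second, your treatment of the quadratic term is incorrect. From $\eta_t\le\eta_0/\lambda_t$ the bound $\bar\rho\eta_t^2\|g_t\|^2\le\bar\rho\eta_t^2\lambda_t^2\le\bar\rho\eta_0^2$ gives a sum of order $\eta_0^2T$, not $\log T$; after dividing by $\sum_t\eta_t\gtrsim\eta_0\min\{T^{(p-1)/p}/\lambda,\sqrt T/G\}$ this contribution diverges, destroying the claimed rate. (The alternative bound $\eta_t\le\eta_0/(G\sqrt t)$ gives $\eta_t^2\lambda_t^2\lesssim\eta_0^2(\lambda/G)^2t^{2/p-1}$, which is also non-summable for $p\le 2$.) The fix is the further split $\|\xi_t^u\|^2=(\|\xi_t^u\|^2-\E_t\|\xi_t^u\|^2)+\E_t\|\xi_t^u\|^2$: the centered part is a bounded martingale difference handled by Freedman, and the conditional second moment satisfies $\eta_t^2\E_t\|\xi_t^u\|^2\le O(\eta_0^2(\sigma/\lambda)^pB^{1-p}t^{-1})$, whose sum is $O(\log T)$; the subgradient part $\eta_t^2\|\partial_t\|^2\le\eta_0^2/t$ is where the constraint $\eta_t\le\eta_0/(G\sqrt t)$ is actually needed. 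Your remaining steps (telescoping, the bias sum of order $\log T$, normalization by $\eta_T T$, and the specialization $\lambda=\sigma$, $B=1$) are consistent with the paper once these two repairs are made.
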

Observing the above results, we find that the high-probability convergence rate is $\widetilde{O}(T^{\frac{1-p}{p}})$ in $T$ regardless of whether $\sigma$ is known. When $\sigma$ is known and set to $\sigma=0$ (i.e., the deterministic case), the step-size $\eta_t$ reduces to $\eta_0\min\{1/G\sqrt{t},1/(2G)\}=O(1/\sqrt{t})$, and the clipping level $\lambda_t$ simplifies to $2G$. By Assumption \ref{assp:G_Lipsch}, $\lambda_t$ always exceeds the norm of the sub-gradients of $f$ at any $x$, meaning Algorithm \ref{algo:ProjectedClippedSsGD} reduces to the vanilla projected GD method with a convergence rate of $\tilde{O}(1/\sqrt{T})$ in $T$.
We highlight that the emergence of $\log T$ is solely attributed to the use of a time-varying step-size. Unlike in Corollary $\ref{coro: fix_step_SsGD_subweibull}$, we can eliminate $\log T$ by fixing the time horizon $T$.

\begin{theorem}\label{thm:projected_ssgd_pcm_HPB_fixtime}
    Suppose that Assumptions \ref{assp:weak_conv},\ref{assp:lower_bound}, \ref{assp:G_Lipsch}, \ref{assp:unbiased} and \ref{assp:p_PCM} hold. If we fix the time horizon $T$. For all $\lambda,\eta_0\in\mathbb{R}_+$, we let $\lambda_t\equiv\max\{2G,\lambda T^{\frac{1}{p}}\}$ and $\eta_t=\eta_0\min\left\{\frac{1}{\lambda_t},\frac{1}{G\sqrt{T}}\right\}$. Then for $\bar{\rho}=2\rho$ and any batch-size $B\in\mathbb{N}\setminus\{0\}$, the following inequality holds with the probability at least $1-\delta$
    \begin{align}
    \frac{1}{T}\sum_{t=1}^T\|\nabla f_{1/2\rho}(x_t)\|^2 = 
        O&\left(\left\{\frac{\Delta_1}{\eta_0}+(G+\rho\eta_0)\left((\sigma/\lambda)^pB^{1-p}+\log(1/\delta)\right)\right\}\cdot\max\left\{\frac{\lambda}{T^{\frac{p-1}{p}}},\frac{G}{\sqrt{T}}\right\}\right)
    \end{align}
When $\sigma$ is known, let $\lambda = \sigma$ and set $B\equiv1$, we have
\begin{align}
    \frac{1}{T}\sum_{t=1}^T\|\nabla f_{1/2\rho}(x_t)\|^2=O\left(\left\{\frac{\Delta_1}{\eta_0}+(G+\rho\eta_0)\log(1/\delta)\right\}\cdot\max\left\{\frac{\sigma}{T^{\frac{p-1}{p}}},\frac{G}{\sqrt{T}}\right\}\right)
\end{align}
where $\Delta_1\triangleq f_{1/2\rho}(x_1)-\min_{x\in\X} f(x)$.
\end{theorem}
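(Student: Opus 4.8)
The plan is to mirror the weakly-convex descent argument behind Lemma~\ref{lem: fundemantal_lem}, but with the clipped direction $g_t$ in place of the raw stochastic subgradient, exploiting that in the fixed-horizon regime both $\lambda_t\equiv\max\{2G,\lambda T^{1/p}\}$ and $\eta_t\equiv\eta_0\min\{1/\lambda_t,1/(G\sqrt T)\}$ are \emph{constant} in $t$. First I would establish a clipped analogue of \eqref{eq:fundemantal_eq}: using $\hat x_t\in\X$, the non-expansiveness of $\mathrm{Proj}_\X$, and the two defining inequalities of the Moreau envelope together with $\rho$-weak convexity, I obtain
\begin{align*}
\frac{\bar\rho-\rho}{\bar\rho}\eta_t\|\nabla f_{1/\bar\rho}(x_t)\|^2 \le \Delta_t-\Delta_{t+1} + \bar\rho\eta_t\langle \hat x_t-x_t,\, g_t-\partial_t\rangle + \bar\rho\eta_t^2\|g_t\|^2,
\end{align*}
with $\bar\rho=2\rho$ so $(\bar\rho-\rho)/\bar\rho=1/2$. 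Since clipping makes $g_t$ biased, I would split $g_t-\partial_t=\epsilon_t+b_t$, where $\epsilon_t\triangleq g_t-\E_t[g_t]$ is a martingale difference and $b_t\triangleq\E_t[g_t]-\partial_t$ is the clipping bias, and telescope so that $\sum_t(\Delta_t-\Delta_{t+1})\le\Delta_1$.

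The crux is controlling the three remaining sums under Assumption~\ref{assp:p_PCM}. Writing $\bar\xi_t\triangleq\tilde g_t-\partial_t$ for the unclipped mini-batch noise, a Marcinkiewicz--Zygmund/von Bahr--Esseen estimate gives $\E_t[\|\bar\xi_t\|^p]\le C_p\sigma^p/B^{p-1}=:\tilde\sigma^p$, which is where the $B^{1-p}$ factor is born. Because $\lambda_t\ge 2G\ge 2\|\partial_t\|$, the clipping map is $1$-Lipschitz and fixes $\partial_t$, so $\|g_t-\partial_t\|\le\|\bar\xi_t\|$; combined with a Markov truncation argument this yields the bias bound $\|b_t\|\lesssim\tilde\sigma^p/\lambda_t^{p-1}$ and, by interpolating boundedness against the $p$-th moment, the clipped-variance bound $\E_t[\|g_t-\partial_t\|^2]\lesssim\lambda_t^{2-p}\tilde\sigma^p$. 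The bias sum I would dispatch by Cauchy--Schwarz and the uniform estimate $\|\hat x_t-x_t\|\le 2G/(\bar\rho-\rho)$ from Lemma~\ref{lem: bound_norm_hx_x} (this is precisely what keeps the argument valid on an unbounded $\X$). For the martingale $\sum_t\bar\rho\eta_t\langle\hat x_t-x_t,\epsilon_t\rangle$ I would apply a Freedman/Bernstein inequality to the bounded increments $|\langle\hat x_t-x_t,\epsilon_t\rangle|\le(2G/(\bar\rho-\rho))\cdot 2\lambda_t$ in the form $\sum_t Y_t\le\alpha\log(1/\delta)+(c/\alpha)\sum_t\E_t[Y_t^2]$; choosing $\alpha$ of the order of the increment bound makes the conditional-variance part fold into the same $\lambda_t^{2-p}\tilde\sigma^p$ scale as the quadratic term, while the $\alpha\log(1/\delta)$ part delivers the advertised $\log(1/\delta)$ dependence. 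Finally $\sum_t\|g_t\|^2\le 2\sum_t\|g_t-\partial_t\|^2+2G^2T$, whose expectation part again contributes a $(\sigma/\lambda)^p B^{1-p}$-type term, whose bounded martingale remainder contributes a $\log(1/\delta)$ term, and whose $G^2T$ part feeds the $G/\sqrt T$ branch.

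The last step is bookkeeping: divide the telescoped inequality by $\tfrac12\eta_T T$ and substitute the fixed constants. The decisive simplification relative to Theorem~\ref{thm:projected_ssgd_pcm_HPB_anytime} is that with $\lambda_t\equiv\lambda T^{1/p}$ the per-step scale $\lambda_t^{1-p}\tilde\sigma^p$ is constant, so summing over $t\in[T]$ returns $T\cdot\lambda_T^{1-p}\tilde\sigma^p$ rather than the harmonic sum $\tilde\sigma^p\lambda^{1-p}\sum_t t^{-(p-1)/p}$ of the anytime analysis; since $\lambda_T^{-(p-1)}=\lambda^{-(p-1)}T^{-(p-1)/p}$ while $\eta_T\lambda_T\le\eta_0$ and $\eta_T\le\eta_0/(G\sqrt T)$, every term collapses into a prefactor drawn from $\{\Delta_1/\eta_0,\,G,\,\rho\eta_0\}$ times the common rate $\max\{\lambda/T^{(p-1)/p},G/\sqrt T\}$, reassembling into the stated bound. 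Crucially this removes the $\log T$ present in the anytime estimate, precisely because the constant clipping level replaces $\sum_t 1/t\sim\log T$ by a telescoped constant; setting $\lambda=\sigma$ and $B=1$ then yields the second displayed inequality.

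The main obstacle I anticipate is the concentration step rather than the descent geometry. Under $p$-BCM with $p<2$ the raw increments have no finite variance, so the whole argument hinges on first clipping to obtain bounded, hence sub-Gaussian, increments and then carrying the interpolated bound $\E_t[\|g_t-\partial_t\|^2]\lesssim\lambda_t^{2-p}\tilde\sigma^p$ through the Freedman inequality so its variance contribution combines with the quadratic $\|g_t\|^2$ term instead of generating an uncontrolled $\lambda_t^2$ factor; tuning the free parameter $\alpha$ so that the final confidence dependence is exactly $\log(1/\delta)$ and no worse is the delicate part. A secondary subtlety is confirming that the clipping bias enters only at the benign order $\lambda_t^{1-p}\tilde\sigma^p$, which relies on the threshold $\lambda_t\ge 2G$ dominating $\|\partial_t\|$ uniformly through Assumption~\ref{assp:G_Lipsch}.
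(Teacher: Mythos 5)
Your proposal follows essentially the same route as the paper: the clipped analogue of Lemma~\ref{lem: fundemantal_lem}, the bias/martingale decomposition $\xi_t=\xi_t^u+\xi_t^b$ with the von Bahr--Esseen batch estimate, the moment bounds of Lemma~\ref{lem:key_lemma_clipped_SsGD}, Freedman's inequality for the two martingale sums, and the observation that the constant clipping level $\lambda_t\equiv\max\{2G,\lambda T^{1/p}\}$ turns the harmonic sums of the anytime analysis into constants, eliminating $\log T$. Your derivation of the clipped second-moment and bias bounds via the $1$-Lipschitzness of the clipping map plus interpolation is a minor (and slightly cleaner) variant of the paper's indicator-splitting argument in Lemma~\ref{app_lem:general_bias_clipped_rand_vec}, but it yields the same orders and the overall argument is correct.
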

Note that the dependence on $\delta$ is $\log(1/\delta)$ regardless of whether $\sigma$ is known, provided that $T$ is fixed. This dependence is more optimal than the $\log(T/\delta)$ in the recent work \citep{hong_2025}, which analyzes the high-probability convergence of the AdaGrad-norm method for weakly convex and $G$-Lipschitz functions with sub-Gaussian type gradient noise.

To better make a comparsion with the previous works, we also provide in-expectation version like Theorem \ref{thm:projected_ssgd_pcm_HPB_anytime} and \ref{thm:projected_ssgd_pcm_HPB_fixtime}.
\begin{theorem}\label{thm: clipped_SsGD_inexpectation_any_time}
    Suppose that Assumptions \ref{assp:weak_conv},\ref{assp:lower_bound}, \ref{assp:G_Lipsch}, \ref{assp:unbiased} and \ref{assp:p_PCM} hold. For all $\lambda,\eta_0\in\mathbb{R}_+$, we let $\lambda_t=\max\{2G,\lambda t^{\frac{1}{p}}\}$ and $\eta_t=\eta_0\min\left\{\frac{1}{\lambda_t}\frac{1}{G\sqrt{t}}\right\}$. Then for $\bar{\rho}=2\rho$ and any batch-size $B\in\mathbb{N}\setminus\{0\}$, the following inequality holds
    \begin{align}
        \frac{1}{T}\sum_{t=1}^T \E_t\left[\|\nabla f_{1/2\rho}(x_t)\|^2\right] = O&\left(\left\{\frac{\Delta_1}{\eta_0}+(\rho\eta_0+G)(\sigma/\lambda)^pB^{1-p}\log(eT)+\rho\eta_0\log(eT)\right\}\cdot\max\left\{\frac{\lambda}{T^{\frac{p-1}{p}}},\frac{G}{\sqrt{T}}\right\}\right)
    \end{align}
    If $\sigma$ is known, let $\lambda=\sigma$ and $B=1$, we have
    \begin{align}
         \frac{1}{T}\sum_{t=1}^T \E_t\left[\|\nabla f_{1/2\rho}(x_t)\|^2\right] = O\left(\left\{\frac{\Delta_1}{\eta_0}+(\rho\eta_0+G)\log(eT)\right\}\cdot\max\left\{\frac{\sigma}{T^{\frac{p-1}{p}}},\frac{G}{\sqrt{T}}\right\}\right)
    \end{align}
\end{theorem}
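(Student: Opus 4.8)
The plan is to use the Moreau envelope $f_{1/\brho}$ with $\brho=2\rho$ as a Lyapunov function and follow the in-expectation weakly-convex template, inserting the clipping bias/second-moment control (I read $\E_t$ in the statement as the total expectation $\E$, since $x_t$ is $\mathcal{F}_{t-1}$-measurable). First I would establish the one-step inequality. Since $x_{t+1}=\mathrm{Proj}_\X(x_t-\eta_t g_t)$, $\hx_t\in\X$, projection is nonexpansive, and $f(\hx_t)+\tfrac{\brho}{2}\|\hx_t-x_t\|^2=f_{1/\brho}(x_t)$, expanding $\|\hx_t-(x_t-\eta_t g_t)\|^2$ gives
\[
f_{1/\brho}(x_{t+1})\le f(\hx_t)+\tfrac{\brho}{2}\|\hx_t-x_{t+1}\|^2\le f_{1/\brho}(x_t)+\brho\eta_t\innerdot{\hx_t-x_t}{g_t}+\tfrac{\brho}{2}\eta_t^2\|g_t\|^2,
\]
which is the clipped analogue of Lemma~\ref{lem: fundemantal_lem}, differing only in that $g_t$ is the biased clipped mini-batch gradient.

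Next I would split $g_t=\partial_t+b_t+\epsilon_t$, with $\partial_t=\E_t[\widetilde{g}_t]\in\partial f(x_t)$, bias $b_t=\E_t[g_t]-\partial_t$, and zero-mean noise $\epsilon_t=g_t-\E_t[g_t]$. Weak convexity of $f$ at $x_t$ together with $f_{1/\brho}(x_t)\le f(x_t)$ yields $\innerdot{\hx_t-x_t}{\partial_t}\le-\tfrac{\brho-\rho}{2}\|\hx_t-x_t\|^2=-\tfrac{\brho-\rho}{2\brho^2}\|\nab f_{1/\brho}(x_t)\|^2$. Taking $\E_t$, the noise term vanishes and the bias term is controlled by Cauchy--Schwarz together with Lemma~\ref{lem: bound_norm_hx_x} and Assumption~\ref{assp:G_Lipsch}, namely $\|\hx_t-x_t\|\le 2G/(\brho-\rho)$. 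This produces the per-step descent
\[
\tfrac{\brho-\rho}{2\brho}\eta_t\,\E\|\nab f_{1/\brho}(x_t)\|^2\le \E[\Delta_t]-\E[\Delta_{t+1}]+\tfrac{2\brho G}{\brho-\rho}\eta_t\,\E\|b_t\|+\tfrac{\brho}{2}\eta_t^2\,\E\|g_t\|^2 .
\]

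The core is then the two clipping estimates, which must use \emph{only} the $p$-th moment so that they survive the infinite-variance regime $p\in(1,2)$. Because the batch averages independent centered noises, a von Bahr--Esseen type inequality gives the effective moment $\E_t\|\widetilde{g}_t-\partial_t\|^p\le 2\sigma^p B^{1-p}$. Using $\lambda_t\ge 2G\ge 2\|\partial_t\|$, the standard clipping argument (on the event $\|\widetilde{g}_t\|>\lambda_t$ one has $\|\widetilde{g}_t-\partial_t\|\ge\tfrac12\|\widetilde{g}_t\|>\tfrac12\lambda_t$, then Markov) gives $\|b_t\|=O(\sigma^pB^{1-p}/\lambda_t^{p-1})$ and $\E_t\|g_t\|^2=O(G^2+\lambda_t^{2-p}\sigma^pB^{1-p})$. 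I would then telescope, substitute $\lambda_t=\max\{2G,\lambda t^{1/p}\}$ and $\eta_t=\eta_0\min\{1/\lambda_t,1/(G\sqrt{t})\}$, and exploit $\eta_t\lambda_t\le\eta_0$, $\eta_t\le\eta_0/(G\sqrt{t})$, $\lambda_t^p\ge\lambda^p t$ so that every stochastic contribution collapses to a $\sum_t 1/t=O(\log(eT))$ factor. Finally, since $\eta_t$ is non-increasing I convert the $\eta_t$-weighted sum to the uniform average via $\tfrac1T\sum_t(\cdot)\le\tfrac{1}{T\eta_T}\sum_t\eta_t(\cdot)$ with $1/(T\eta_T)=\eta_0^{-1}\max\{\lambda_T/T,G/\sqrt{T}\}=O\!\big(\eta_0^{-1}\max\{\lambda/T^{(p-1)/p},G/\sqrt{T}\}\big)$, giving the stated bound; the $\sigma$-known case follows by setting $\lambda=\sigma$, $B=1$.

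I expect the main obstacle to be the clipping estimates in the $p<2$ regime: $\E_t\|g_t\|^2$ would be infinite without clipping, so I must carefully separate the $G^2$ contribution (which pairs with $\eta_t\le\eta_0/(G\sqrt{t})$ to give the $\rho\eta_0\log(eT)$ term) from the clipped-noise contribution scaling as $\lambda_t^{2-p}$ (which pairs with $\eta_t\le\eta_0/\lambda_t$ to give the $\rho\eta_0(\sigma/\lambda)^pB^{1-p}\log(eT)$ term), and likewise ensure the bias term yields exactly the $G(\sigma/\lambda)^pB^{1-p}\log(eT)$ contribution rather than a higher power of $\sigma$ (which is why the direct Lipschitz bound on $\|\hx_t-x_t\|$, not a Young-inequality absorption, is the right choice). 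Matching the precise structure of the bracket, rather than merely the rate in $T$, is the delicate bookkeeping step.
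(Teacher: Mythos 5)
Your proposal is correct and follows essentially the same route as the paper: the same Moreau-envelope one-step inequality, the same bias/martingale decomposition of the clipped gradient (your $b_t,\epsilon_t$ are the paper's $\xi_t^b,\xi_t^u$), the same von Bahr--Esseen mini-batch moment bound and clipping estimates, and the same $\eta_t\lambda_t\le\eta_0$, $\lambda_t^{-p}\le\lambda^{-p}t^{-1}$ bookkeeping followed by division by $T\eta_T$. The only deviations are harmless constant-factor differences (e.g.\ your descent coefficient $\tfrac{\brho-\rho}{2\brho}$ versus the paper's $\tfrac{\brho-\rho}{\brho}$ obtained via strong convexity of the proximal subproblem), and your reading of $\E_t$ as total expectation matches what the paper actually proves.
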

We still can fix the time-horizon $T$ to remove $\log(eT)$.
\begin{theorem}\label{thm:clipped_SsGD_inexpecation_fix_time}
        Suppose that Assumptions \ref{assp:weak_conv},\ref{assp:lower_bound}, \ref{assp:G_Lipsch}, \ref{assp:unbiased} and \ref{assp:p_PCM} hold. Additionally, we fix $T$. For all $\lambda,\eta_0\in\mathbb{R}_+$, we let $\lambda_t\equiv\max\{2G,\lambda T^{\frac{1}{p}}\}$ and $\eta_t\equiv\eta_0\min\left\{\frac{1}{\lambda_t}\frac{1}{G\sqrt{T}}\right\}$. Then for $\bar{\rho}=2\rho$ and any batch-size $B\in\mathbb{N}\setminus\{0\}$, the following inequality holds
    \begin{align}
        \frac{1}{T}\sum_{t=1}^T \E_t\left[\|\nabla f_{1/2\rho}(x_t)\|^2\right] = O&\left(\left\{\frac{\Delta_1}{\eta_0}+(\rho\eta_0+G)(\sigma/\lambda)^pB^{1-p}+\rho\eta_0\right\}\cdot\max\left\{\frac{\lambda}{T^{\frac{p-1}{p}}},\frac{G}{\sqrt{T}}\right\}\right)
    \end{align}
    When $\sigma$ is known, let $\lambda=\sigma$ and $B=1$, we have
    \begin{align}
         \frac{1}{T}\sum_{t=1}^T \E_t\left[\|\nabla f_{1/2\rho}(x_t)\|^2\right] = O\left(\left\{\frac{\Delta_1}{\eta_0}+\rho\eta_0+G\right\}\cdot\max\left\{\frac{\sigma}{T^{\frac{p-1}{p}}},\frac{G}{\sqrt{T}}\right\}\right)
    \end{align}
\end{theorem}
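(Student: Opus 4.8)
The plan is to run the in-expectation weakly-convex analysis that underlies Theorem~\ref{thm: clipped_SsGD_inexpectation_any_time}, but with the constant (horizon-dependent) schedule, and to show that this is exactly what removes the $\log(eT)$ factor. First I would derive a clipped analogue of the fundamental descent inequality~\eqref{eq:fundemantal_eq}. Writing $\partial_t\in\partial f(x_t)$ for the true subgradient, $\hat{x}_t$ for the Moreau proximal point at $x_t$, and $g_t$ for the clipped mini-batch gradient, non-expansiveness of the projection together with the $(\bar{\rho}-\rho)$-strong convexity of $f(\cdot)+\tfrac{\bar{\rho}}{2}\|\cdot-x_t\|^2$ (whose minimizer is $\hat{x}_t$) yields, exactly as in Lemma~\ref{lem: fundemantal_lem} but with $\xi_t$ replaced by the clipped residual $g_t-\partial_t$ and the full $\|g_t\|^2$ retained,
\begin{align*}
\frac{\bar{\rho}-\rho}{\bar{\rho}}\,\eta_t\,\|\nabla f_{1/\bar{\rho}}(x_t)\|^2 \;\le\; \Delta_t-\Delta_{t+1} \;+\; \bar{\rho}\,\eta_t\,\langle \hat{x}_t-x_t,\, g_t-\partial_t\rangle \;+\; \tfrac{\bar{\rho}}{2}\,\eta_t^2\,\|g_t\|^2 .
\end{align*}
Summing over $t$ and taking total expectation is the point where the in-expectation setting pays off: decomposing $g_t-\partial_t=(g_t-\E_t[g_t])+b_t$ with clipping bias $b_t\triangleq\E_t[g_t]-\partial_t$, the first part is a martingale difference (since $\hat{x}_t,x_t$ are $\mathcal{F}_{t-1}$-measurable), so it vanishes and only $\langle\hat{x}_t-x_t,b_t\rangle$ survives. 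No Freedman-type concentration (unlike Lemma~\ref{lem:sub_weibull_freedman_ineq}) is needed.

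Two quantitative ingredients remain, both under $p$-BCM noise after mini-batch averaging. For the averaged noise $\tilde{\xi}_t\triangleq\tilde{g}_t-\partial_t$, a von Bahr--Esseen moment inequality for a sum of $B$ independent mean-zero vectors with $p\in(1,2]$ gives $\E_t[\|\tilde{\xi}_t\|^p]\le C_p\,\sigma^p B^{1-p}$, which is the origin of the $(\sigma/\lambda)^p B^{1-p}$ factor. Since $\lambda_t\ge 2G\ge 2\|\partial_t\|$ by Assumption~\ref{assp:G_Lipsch}, a standard clipping-bias estimate yields $\|b_t\|\lesssim \E_t[\|\tilde{\xi}_t\|^p]/\lambda_t^{p-1}\lesssim \sigma^p B^{1-p}/\lambda_t^{p-1}$; combined with $\|\hat{x}_t-x_t\|\le 2G/(\bar{\rho}-\rho)$ from Lemma~\ref{lem: bound_norm_hx_x}, the bias term contributes a factor of order $G(\sigma/\lambda)^p B^{1-p}$ times the overall rate.

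I expect the second-moment term $\E_t[\|g_t\|^2]$ to be the main obstacle: when $p<2$ the variance may be infinite, so the naive bound $\|g_t\|^2\le\lambda_t^2$ destroys the rate because it forces a factor $\lambda_t$ that grows with $T$. The fix is to peel off the deterministic subgradient \emph{before} invoking the $p$-th moment: write $\|g_t\|^2\le 2\|\partial_t\|^2+2\|g_t-\partial_t\|^2$, bound $\|\partial_t\|^2\le G^2$ (which I will absorb into a $\rho\eta_0$ term via $\eta_t\le\eta_0/(G\sqrt{T})$), and use the pointwise domination $\|g_t-\partial_t\|\le 3\|\tilde{\xi}_t\|$ — valid because clipping activates only when $\|\tilde{g}_t\|>\lambda_t\ge 2\|\partial_t\|$, forcing $\|\tilde{\xi}_t\|\ge\lambda_t/2$ — together with $\|g_t-\partial_t\|^{2-p}\le(\tfrac32\lambda_t)^{2-p}$ to obtain $\E_t[\|g_t-\partial_t\|^2]\lesssim \lambda_t^{2-p}\sigma^p B^{1-p}$. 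This keeps $G^p$ out of the heavy-tailed factor and gives $\E_t[\|g_t\|^2]\lesssim G^2+\lambda_t^{2-p}\sigma^p B^{1-p}$.

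Finally I would assemble the bound. Telescoping $\Delta_t-\Delta_{t+1}$ to $\Delta_1$, dividing by $\tfrac{\bar{\rho}-\rho}{\bar{\rho}}\,\eta\,T$ (with $\bar{\rho}=2\rho$ the descent coefficient is $\tfrac12$), and inserting the constant-in-$t$ choices $\lambda_t\equiv\max\{2G,\lambda T^{1/p}\}$ and $\eta_t\equiv\eta_0\min\{1/\lambda_t,1/(G\sqrt{T})\}$, each surviving term matches a piece of $M\triangleq\max\{\lambda/T^{(p-1)/p},G/\sqrt{T}\}$: the initial gap gives $\tfrac{\Delta_1}{\eta_0}M$ (using $\tfrac{1}{\eta T}=\tfrac{1}{\eta_0 T}\max\{\lambda_t,G\sqrt{T}\}$), the bias gives $G(\sigma/\lambda)^p B^{1-p}M$, and the two halves of the second moment give $\rho\eta_0 M$ (from $G^2$, via $\eta\le\eta_0/(G\sqrt{T})$) and $\rho\eta_0(\sigma/\lambda)^p B^{1-p}M$ (from the noise part, via $\eta\le\eta_0/\lambda_t$ and $\lambda_t\ge\lambda T^{1/p}$). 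The absence of $\log(eT)$ is precisely because fixing $T$ makes $\lambda_t,\eta_t$ constant in $t$: every per-iteration estimate is then identical and $\tfrac1T\sum_t(\cdot)=(\cdot)$ carries no summation-induced logarithm, whereas in the anytime schedule the corresponding bias sum behaves like $\sum_t \eta_t/\lambda_t^{p-1}\sim\sum_t t^{-1}\sim\log T$. Specializing $\lambda=\sigma,\ B=1$ recovers the stated special case.
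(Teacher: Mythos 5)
Your proposal is correct and follows essentially the same route the paper takes for the anytime in-expectation result (Theorem \ref{thm: clipped_SsGD_inexpectation_any_time}) specialized to the constant schedule: the clipped descent lemma, the $\xi_t=\xi_t^u+\xi_t^b$ decomposition with total expectation annihilating the martingale part, the von Bahr--Esseen moment bound for the mini-batch, and the observation that $\lambda_t\equiv\max\{2G,\lambda T^{1/p}\}$ turns each per-iteration factor $(\sigma/\lambda_t)^p\le(\sigma/\lambda)^pT^{-1}$ and $G^2\eta_t^2\le\eta_0^2T^{-1}$ into an $O(1)$ sum, which is exactly what removes $\log(eT)$. The only minor deviation is that you bound $\E_t[\|g_t-\partial_t\|^2]$ via the pointwise domination $\|g_t-\partial_t\|\le 3\|\widetilde{g}_t-\partial_t\|$ (using $\lambda_t\ge 2G$), whereas the paper's Lemma \ref{app_lem:general_bias_clipped_rand_vec} uses an indicator-splitting plus Markov argument; both yield the same $O(\lambda_t^{2-p}\sigma^pB^{1-p})$ order, so the final bound matches.
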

If we analyze the number of sub-gradient evaluations required to find a point $x$ satisfying $\mathbb{E}[\|\nabla f_{1/\bar{\rho}}(x)\|] \leq \varepsilon$, the results in Theorem \ref{thm:clipped_SsGD_inexpecation_fix_time} show that finding such an $x$ requires at most $O\left(\varepsilon^{-\frac{2p}{p-1}}\right)$ sub-gradient evaluations. (Note that since the left-hand side of the inequalities in Theorem \ref{thm:clipped_SsGD_inexpecation_fix_time} is the average squared norm of the gradients of the Moreau envelope functions of $f$, we must use Jensen's inequality, i.e., $\mathbb{E}[\|X\|] \leq \sqrt{\mathbb{E}[\|X\|^2]}$.). \cite{zhang_2020} has shown that the upper bound of the sample complexity for any stochastic algorithm under Assumption \ref{assp:p_PCM} when minimizing $L$-smooth functions cannot be lower than $\Omega(\varepsilon^{-\frac{3p-2}{p-1}})$. Our sample complexity exceeds this lower bound. However, no existing works have provided the lower sample complexity (or conergence rate) of SFOMs under Assumption \ref{assp:p_PCM} for minimizing weakly convex functions. The primary reason is that it remains unclear now whether stochastic gradient methods for non-smooth, non-convex yet weakly convex functions enjoy the same lower sample complexity as those for smooth functions. Considering this, we believe our results still demonstrate value for weakly convex optimization, even though this result may not seem optimal compared to existing works for 
$L$
-smooth optimization. 

Finally, we want to compare with the recent work \citep{hu_2025}, which provides the only in-expectation bound for clipped-SsGD in weakly convex optimization under the distributed stochastic optimization scenario.
\begin{itemize}
    \item \citep{hu_2025} only provides in-expectation convergence guarantees without specifying the convergence rate or complexity (merely a general in-expectation upper bound without concrete choices of $\eta_t$ and $\lambda_t$), making it difficult to derive practical insights. In contrast, we present specific high probability and in-expectation sample complexities that facilitate direct comparison with prior works.
    \item In \citep{hu_2025}, the clipping level $\lambda_t$ cannot adapt to $\sigma$. Their in-expectation convergence guarantee is established under the condition that $\lambda_k$ is increasing and $\lim_{t\rightarrow +\infty}\lambda_t=+\infty$, regardless of whether $\sigma = 0$. In our analysis, $\lambda_t$ is set to $\max\{\sigma t^{1/p},2G\}$ (for the case where $\sigma$ is known; see Theorem \ref{thm:clipped_SsGD_inexpecation_fix_time}), which is also increasing when $\sigma \neq 0$. However, when $\sigma = 0$, $\lambda_t$ is fixed at $2G$ in our framework, meaning the clipping technique has no effect on the stochastic sub-gradients and the clipped-SsGD method reduces to the vanilla GD method.
    \item \citep{hu_2025} gives a example of choices of $\eta_t = 1/(t+1)$ and $\lambda_t = 2G(t+1)^{0.4}$, ensuring the convergence rate attains $O(1/\log T)$. This rate is significantly worse than $O(T^{-\frac{p-1}{p}})$ for any $p \in (1, 2]$ in our analysis.
\end{itemize}
Additionally, \citep{hu_2025} only provides the in-expectation convergence guarantee for the clipped-SsGD method. Inspired by prior works analyzing the high-probability convergence of smooth or convex clipped-SsGD methods—including \citep{Cutkosky_2021,liu_2023c,liuzijian_2023,liu_2023stochasticnonsmoothconvexoptimization,Nguyen_2023,chezhegov_2025}—we can establish high-probability upper bounds for the non-smooth weakly convex clipped-SsGD method under Assumption \ref{assp:p_PCM}. These high-probability bounds are more practical in real-world applications compared to in-expectation upper bounds.

\subsubsection{Proof Sketch} 
In this subsection, for simplicity, we only provide a proof sketch of the high-probability convergence. The definitions of $\mathcal{F}_t$ and $\E_t$ follow the same definitions as the proof sketch in section \ref{sec:SsGD_subweibull}. 
We still start with the Lemma \ref{lem: fundemantal_lem}
\begin{align}\label{eq:clipped_SsGD_baseineq_main_body}
    \frac{\bar{\rho}-\rho}{\bar{\rho}}\eta_t\|\nabla f_{1/\bar{\rho}}(x_t)\|^2\leq \Delta_t-\Delta_{t+1} + \bar{\rho}\eta_t\langle \hat{x}_t-x_t, \xi_t\rangle + \bar{\rho}\eta_t^2\|\xi_t\|^2 + \bar{\rho}\eta_t^2G^2
\end{align}
Note that $\partial_t$ is defined so that $\mathbb{E}_t[\widetilde{g}_t] \in \partial f(x_t)$ and $\xi_t\triangleq g_t-\partial_t$. Unlike the unclipped projected SsGD method, $g_t$ here is the clipped stochastic sub-gradient, meaning $g_t$ is no longer an unbiased estimator of the sub-gradient of $f$ at $x_t$—that is, $\mathbb{E}_t[g_t] \notin \partial f(x_t)$. As a result, the term $\bar{\rho}\eta_t\langle \hat{x}_t - x_t, \xi_t\rangle$ is not a martingale, presenting a challenge to analyze the upper bound of the high probability of the summation of this term.
To address this challenge, we borrow the idea from \citep{Cutkosky_2021} to decompose $\xi_t$ into $\xi_t^u$ and $\xi_t^b$, where $\xi_t^u \triangleq g_t - \mathbb{E}_t[g_t]$ denotes the unbiased part and $\xi_t^b \triangleq \mathbb{E}_t[g_t] - \partial_t$ denotes the biased part. After performing such a decomposition, the inequality \eqref{eq:clipped_SsGD_baseineq_main_body} becomes
\begin{align}\label{eq: decomposition_fundemental_lemma_clipped_SsGD}
    \frac{\bar{\rho}-\rho}{\bar{\rho}}\eta_t\|\nabla f_{1/\bar{\rho}}(x_t)\|^2 &\leq \Delta_t-\Delta_{t+1}+\bar{\rho}\eta_t\langle \hat{x}_t-x_t,\xi_t^u\rangle+\bar{\rho}\eta_t\langle \hat{x}_t-x_t,\xi_t^b\rangle+ \bar{\rho}\eta_t^2(2\|\xi_t^u\|^2+2\|\xi_t^b\|^2+G^2)
\end{align}
To continue the analysis, we need the following key lemma.
\begin{lemma}\label{lem:key_lemma_clipped_SsGD}
    Let $\lambda_t\geq 2G$, then $\forall t\in[T]$, we have
    \begin{align}
    &\E_t[\|\xi_t\|^2],\, \E_t[\|\xi_t^u\|^2], \, \|\xi_t^b\|^2 \leq \frac{10(2-B^{-1})\sigma^p\lambda_t^{2-p}}{B^{p-1}}\nonumber\\
    &\|\xi_t^u\|\leq 2\lambda_t,\,\|\xi_t^b\|\leq \frac{2(2-B^{-1})\sigma^p\lambda_t^{1-p}}{B^{p-1}}
    \end{align}
\end{lemma}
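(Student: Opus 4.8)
The plan is to control the clipping-induced error on the mini-batch average $\widetilde g_t$ by splitting $\xi_t = g_t - \partial_t$ into the unbiased part $\xi_t^u = g_t - \E_t[g_t]$ and the bias part $\xi_t^b = \E_t[g_t] - \partial_t$, and to bound each using the $p$-BCM assumption together with the fact that $\lambda_t \ge 2G \ge 2\|\partial_t\|$. I will first record two elementary deterministic consequences of the clipping map $y \mapsto \min\{\lambda_t/\|y\|,1\}\,y =: \mathrm{clip}_{\lambda_t}(y)$: (i) $\|\mathrm{clip}_{\lambda_t}(y)\| \le \lambda_t$, so $\|g_t\|\le \lambda_t$ and hence $\|\xi_t^u\| = \|g_t - \E_t[g_t]\| \le 2\lambda_t$; and (ii) the contraction-type bound $\|\mathrm{clip}_{\lambda_t}(y) - z\| \le 2\|y - z\|$ whenever $\|z\|\le \lambda_t/2$ (or a constant of that flavour), which lets me pass from the clipped quantity back to $\widetilde g_t - \partial_t$ at the cost of a factor $2$. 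The key random object is $\Delta g_t := \widetilde g_t - \partial_t = \frac1B\sum_{i=1}^B(g(x_t,\zeta_{i,t}) - \partial_t)$, a mean-zero average of $B$ i.i.d. terms each with $p$-th central moment at most $\sigma^p$; the standard moment bound for sums of independent centered vectors gives $\E_t[\|\Delta g_t\|^p] \le (2 - B^{-1})\sigma^p B^{1-p}$ (this is the $1<p\le 2$ analogue of variance-averaging — e.g. via the $p$-uniform-smoothness / von Bahr–Esseen inequality, or the elementary bound $\E\|\sum X_i\|^p \le 2\sum\E\|X_i\|^p$ for martingale differences with $p\le 2$).

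With those two ingredients the lemma is essentially bookkeeping. For the bias: $\|\xi_t^b\| = \|\E_t[g_t] - \partial_t\| = \|\E_t[\mathrm{clip}_{\lambda_t}(\widetilde g_t) - \widetilde g_t]\|$ since $\E_t[\widetilde g_t] = \partial_t$; on the event $\|\widetilde g_t\|\le\lambda_t$ this difference is zero, and on $\|\widetilde g_t\|>\lambda_t$ it is $(1 - \lambda_t/\|\widetilde g_t\|)\widetilde g_t$, which has norm $\le \|\widetilde g_t\| - \lambda_t \le \|\widetilde g_t - \partial_t\| + \|\partial_t\| - \lambda_t \le \|\Delta g_t\| - \lambda_t/2 \le \|\Delta g_t\|$, and moreover this event forces $\|\Delta g_t\| \ge \lambda_t/2$. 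Hence $\|\xi_t^b\| \le \E_t[\|\Delta g_t\|\,\mathbf 1\{\|\Delta g_t\|\ge \lambda_t/2\}] \le \E_t[\|\Delta g_t\|^p]\,(\lambda_t/2)^{1-p} \le 2^{p-1}(2-B^{-1})\sigma^p B^{1-p}\lambda_t^{1-p}$, which is dominated by the claimed $2(2-B^{-1})\sigma^p\lambda_t^{1-p}B^{1-p}$ since $p\le 2$. For the second moments I will bound $\E_t[\|\xi_t\|^2]$ by splitting again on the clipping event: on $\{\|\widetilde g_t\|\le\lambda_t\}$, $\|\xi_t\| = \|\widetilde g_t - \partial_t\| = \|\Delta g_t\|$; on the complement, $\|\xi_t\| = \|\mathrm{clip}_{\lambda_t}(\widetilde g_t) - \partial_t\| \le \lambda_t + \|\partial_t\| \le \tfrac32\lambda_t$, and this event has $\|\Delta g_t\|\ge \lambda_t/2$. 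In both regimes $\|\xi_t\|^2 \le C\,\|\Delta g_t\|^2\,\mathbf 1\{\|\Delta g_t\|\le\lambda_t\cdot\text{const}\} + C\lambda_t^2\mathbf 1\{\|\Delta g_t\| \ge \lambda_t/2\}$, and since $\|\Delta g_t\|^2 = \|\Delta g_t\|^p\|\Delta g_t\|^{2-p}\le \|\Delta g_t\|^p(\text{threshold})^{2-p}$ under the respective indicator, every term reduces to $\lambda_t^{2-p}\,\E_t[\|\Delta g_t\|^p]$ up to an absolute constant; collecting the constants yields the stated $10(2-B^{-1})\sigma^p\lambda_t^{2-p}B^{1-p}$. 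The bound on $\E_t[\|\xi_t^u\|^2]$ follows from $\E_t[\|\xi_t^u\|^2] = \E_t[\|g_t\|^2] - \|\E_t[g_t]\|^2 \le \E_t[\|g_t\|^2]$ and $\|g_t\|^2 \le \|g_t - \partial_t\|^2$ wait — more directly, $\|g_t\| \le \|\xi_t\| + \|\partial_t\|$ so $\E_t[\|\xi_t^u\|^2]\le \E_t[\|g_t - \partial_t\|^2] = \E_t[\|\xi_t\|^2]$ only after centering at $\partial_t$ rather than $\E_t[g_t]$; since centering at the mean minimizes the second moment, $\E_t[\|\xi_t^u\|^2]\le \E_t[\|\xi_t\|^2]$ directly, and $\|\xi_t^b\|^2 \le \E_t[\|\xi_t\|^2]$ by Jensen, so all three second-moment bounds follow from the one for $\|\xi_t\|^2$.

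The main obstacle, and the only place requiring genuine care, is the clean derivation of $\E_t[\|\Delta g_t\|^p] \le (2-B^{-1})\sigma^p B^{1-p}$ with exactly that constant for the full range $p\in(1,2]$: one must invoke the right inequality for $p$-th moments of sums of independent centered random vectors in a Hilbert space (the $p$-smoothness of $\ell_2$, giving $\E\|\sum_i Y_i\|^p \le 2^{p-1}\sum_i \E\|Y_i\|^p$ or the sharper $(2-B^{-1})$-type constant from a Marcinkiewicz–Zygmund / von Bahr–Esseen argument), and then extract the $B^{1-p}$ factor by the scaling $\Delta g_t = \frac1B\sum Y_i$. Everything else is a matter of organizing the two clipping events and repeatedly using $\|\Delta g_t\|^{2-p}\le (\text{const}\cdot\lambda_t)^{2-p}$ or $\|\Delta g_t\|^p\ge(\lambda_t/2)^p$ on the appropriate event; I would present the $p$-th moment inequality as a cited/standard lemma (it appears in the same form in \citep{hu_2025} and \citep{Cutkosky_2021}) and devote the bulk of the write-up to the case analysis, being careful that the constant $10$ comfortably absorbs the worst-case combination of $2^{p-1}$, $(3/2)^2$, and the $(2-B^{-1})$ factor across both regimes.
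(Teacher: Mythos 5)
Your proposal is correct and follows essentially the same route as the paper: the paper also reduces everything to the von Bahr--Esseen-type bound $\E_t[\|\widetilde g_t-\partial_t\|^p]\le(2-B^{-1})\sigma^pB^{1-p}$ (its Lemma~\ref{app_lem:vector_batch_size_noise}), splits on the clipping event using $\lambda_t\ge 2\|\partial_t\|$, and derives the $\xi_t^u$ and $\xi_t^b$ second-moment bounds from that of $\xi_t$ via mean-centering and Jensen, packaged as the general Lemma~\ref{app_lem:general_bias_clipped_rand_vec} with $\epsilon=1$. The only cosmetic difference is that for $\|\xi_t^b\|$ the paper uses H\"older plus Markov where you use the direct estimate $\|\Delta g_t\|^{1-p}\le(\lambda_t/2)^{1-p}$ on the clipping event; both yield the stated constants.
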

The similar results have appeared in \citep{zhang_2020,gorbunov_2020,zhang_2023,Nguyen_2023,liu_2023c, liu_2023stochasticnonsmoothconvexoptimization,liuzijian_2023,LiuLangqi_2024} with the batch size equal to 1. From Lemma \ref{lem:key_lemma_clipped_SsGD}, we can observe that increasing $\lambda_t$ will lead to an increase in the norm of $\xi_t^u$ while decreasing the norm of $\xi_t^b$. This is because $\|\xi_t^b\| \leq 4\sigma^p\lambda_t^{1-p}/B^{p-1}$ and $1-p \leq 0$ for all $p \in (1, 2]$. This observation inspires us to consider how to choose a proper $\lambda_t$ to balance the orders of $\|\xi_t^u\|$ and $\|\xi_t^b\|$. \citep{liu_2023stochasticnonsmoothconvexoptimization} provides a hint that we can upper bound $\eta_t\lambda_t$, i.e., by setting $\eta_t\lambda_t \leq \eta_0$. This condition also plays a important role when applying concentration inequalities to the martingale terms.

Now after decomposing $\xi$, we can easily observe that the term $\bar{\eta_t}\langle\hat{x}_t-x_t,\xi_t^u\rangle$ is indeed a martingale, which implies that we can use the concentration inequalities to bound the summation again. Actually, we have following lemma
\begin{lemma}\label{lem: clipped_SsGD_HPB1_anytime}
    Let we choose $\lambda_t=\max\{\lambda t^{1/p},2G\}$ and $\eta_t\leq\eta_0/\lambda_t$. We set $\bar{\rho}=2\rho$ the following inequality holds with the probability at least $1-\delta/2$
    \begin{align}
        \sum_{t=1}^T \bar{\rho}\eta_t\langle \hat{x}_t-x_t, \xi_t^u\rangle = O\left(\eta_0G\left(\log(1/\delta)+\sqrt{(\sigma/\lambda)^pB^{1-p}\log(1/\delta)\log T }\right)\right)
    \end{align}
\end{lemma}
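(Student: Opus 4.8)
The plan is to recognize that the quantity in question is a sum of a \emph{bounded} martingale difference sequence with a \emph{controllable} conditional variance, and then to apply a Bernstein/Freedman-type concentration inequality. Set $X_t \triangleq \bar{\rho}\eta_t\langle \hat{x}_t - x_t, \xi_t^u\rangle$. Because $x_{t+1}$ is built from $g_t$, the iterate $x_t$ (hence $\hat{x}_t$, and the deterministic step size $\eta_t$) is $\mathcal{F}_{t-1}$-measurable, whereas $\xi_t^u = g_t - \E_t[g_t]$ satisfies $\E_t[\xi_t^u] = 0$ by construction. Therefore $\E_t[X_t] = \bar{\rho}\eta_t\langle \hat{x}_t - x_t, \E_t[\xi_t^u]\rangle = 0$, so $\{X_t\}$ is a martingale difference sequence adapted to $\{\mathcal{F}_t\}$. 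This is exactly why the decomposition in \eqref{eq: decomposition_fundemental_lemma_clipped_SsGD} isolates the unbiased part: only $\xi_t^u$ gives a martingale, making this term amenable to concentration.

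First I would establish a uniform almost-sure bound on $|X_t|$. By Cauchy--Schwarz, $|X_t| \leq \bar{\rho}\eta_t\|\hat{x}_t - x_t\|\,\|\xi_t^u\|$. Lemma~\ref{lem: bound_norm_hx_x} with $\bar{\rho} = 2\rho$ gives $\|\hat{x}_t - x_t\| \leq 2\|g\|/(\bar{\rho}-\rho) \leq 2G/\rho$ after invoking Assumption~\ref{assp:G_Lipsch}, while Lemma~\ref{lem:key_lemma_clipped_SsGD} provides $\|\xi_t^u\| \leq 2\lambda_t$. Combined with the step-size constraint $\eta_t \leq \eta_0/\lambda_t$, the factors $\rho$ and $\lambda_t$ cancel and one obtains $|X_t| \leq 2\rho\cdot(\eta_0/\lambda_t)\cdot(2G/\rho)\cdot 2\lambda_t = 8\eta_0 G$. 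This cancellation is the crux: the design choice $\eta_t\lambda_t \leq \eta_0$ is precisely what keeps the increments bounded by $O(\eta_0 G)$ uniformly in $t$, independently of the growing clipping level.

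Next I would bound the sum of conditional variances. Using Cauchy--Schwarz again, $\E_t[X_t^2] \leq \bar{\rho}^2\eta_t^2\|\hat{x}_t - x_t\|^2\,\E_t[\|\xi_t^u\|^2]$. Substituting the same two bounds together with $\E_t[\|\xi_t^u\|^2] \leq 10(2-B^{-1})\sigma^p\lambda_t^{2-p}/B^{p-1}$ from Lemma~\ref{lem:key_lemma_clipped_SsGD}, the $\rho$ cancels and the $\lambda_t$ dependence collapses to $\lambda_t^{-p}$, giving $\E_t[X_t^2] = O(\eta_0^2 G^2\sigma^p\lambda_t^{-p}B^{1-p})$. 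Since $\lambda_t \geq \lambda t^{1/p}$ we have $\lambda_t^{-p} \leq \lambda^{-p}/t$, so $\E_t[X_t^2] = O(\eta_0^2 G^2(\sigma/\lambda)^p B^{1-p}/t)$. Summing over $t$ and using $\sum_{t=1}^T 1/t = O(\log T)$ yields the deterministic variance budget $V \triangleq \sum_{t=1}^T \E_t[X_t^2] = O(\eta_0^2 G^2(\sigma/\lambda)^p B^{1-p}\log T)$.

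Finally I would invoke the Bernstein-type martingale inequality (Lemma~\ref{app_lem: berenstain_type_ineq}) with increment bound $c = 8\eta_0 G$ and variance bound $V$: with probability at least $1-\delta/2$, $\sum_{t=1}^T X_t \leq \sqrt{2V\log(1/\delta)} + \tfrac{2}{3}c\log(1/\delta)$ (up to absorbed constants in $\delta$). Plugging in $V$ and $c$ produces the two advertised terms, namely $\eta_0 G\log(1/\delta)$ from the bounded-increment contribution and $\eta_0 G\sqrt{(\sigma/\lambda)^p B^{1-p}\log(1/\delta)\log T}$ from the variance contribution, which is exactly the claimed bound. I expect the main obstacle to be carrying out the two cancellations cleanly and, more subtly, ensuring that the conditional-variance sum is a genuinely \emph{deterministic} bound so that the concentration inequality applies without an additional union bound over a random variance proxy; the deterministic nature of $\lambda_t$ together with the pointwise control in Lemma~\ref{lem:key_lemma_clipped_SsGD} are what make this step go through.
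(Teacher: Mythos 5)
Your proposal follows essentially the same route as the paper's own proof: Cauchy--Schwarz plus Lemma~\ref{lem: bound_norm_hx_x} and Lemma~\ref{lem:key_lemma_clipped_SsGD} to get the uniform increment bound $O(\eta_0 G)$ via the cancellation $\eta_t\lambda_t\le\eta_0$, the same collapse of $\eta_t^2\lambda_t^{2-p}$ to $\eta_0^2\lambda_t^{-p}\le\eta_0^2\lambda^{-p}t^{-1}$ giving a deterministic variance budget $O(\eta_0^2G^2(\sigma/\lambda)^pB^{1-p}\log T)$, and finally the Freedman/Bernstein inequality of Lemma~\ref{app_lem: berenstain_type_ineq}. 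The argument is correct (your increment constant $8\eta_0 G$ is in fact the accurate one; the paper's $4G\eta_0$ appears to drop a factor of $\bar{\rho}/(\bar{\rho}-\rho)=2$), so there is nothing substantive to add.
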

The challenge now lies in bounding the summation of $\eta_t^2\|\xi_t\|^2$. Based on Lemma \ref{lem:key_lemma_clipped_SsGD}, we can directly bound it by $4\lambda_t^2$. However, using this bound, we obtain $\eta_t^2\lambda_t^2 \leq \eta_0^2$, which is insufficient to control the summation. We observe that in Lemma \ref{lem:key_lemma_clipped_SsGD}, the bound on $\mathbb{E}_t[\|\xi_t^u\|^2]$ has a lower order than that of $\|\xi_t\|^2$. Therefore, we can decompose $\|\xi_t^u\|^2$ into $\|\xi_t^u\|^2-\E_t[\|\xi_t^u\|^2]+\E_t[\|\xi_t^u\|^2]$. Note that $\|\xi_t^u\|^2 - \mathbb{E}_t[\|\xi_t^u\|^2]$ is a martingale. For the summation of this term, we have the following lemma:
\begin{lemma}\label{lem:clipped_SsGD_HPB2_any_time}
    Let we choose $\lambda_t=\max\{\lambda t^{1/p},2G\}$ and $\eta_t\leq\eta_0/\lambda_t$. We set $\bar{\rho}=2\rho$. The following inequality holds with the probability at least $1-\delta/2$
    \begin{align}
        \sum_{t=1}^T \bar{\rho}\eta_t^2(\|\xi_t^u\|^2 - \mathbb{E}_t[\|\xi_t^u\|^2]) = O\left(\rho\eta_0^2\left(\log(1/\delta)+\sqrt{(\sigma/\lambda)^pB^{p-1}\log(1/\delta)\log T}\right)\right)
    \end{align}
\end{lemma}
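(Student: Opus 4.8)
The plan is to recognize that the summand is a bounded martingale difference sequence and then to invoke a Bernstein/Freedman-type concentration inequality — precisely the one already used earlier as Lemma~\ref{app_lem: berenstain_type_ineq}. Set $X_t\triangleq \bar{\rho}\eta_t^2\big(\|\xi_t^u\|^2-\E_t[\|\xi_t^u\|^2]\big)$. Because $\xi_t^u=g_t-\E_t[g_t]$, the quantity $\|\xi_t^u\|^2-\E_t[\|\xi_t^u\|^2]$ is conditionally centered, so $\E_t[X_t]=0$ and $\{X_t\}$ is a martingale difference sequence adapted to $\{\mathcal{F}_t\}$. It then remains only to control (i) the almost-sure magnitude of the increments and (ii) the predictable quadratic variation, both of which follow from Lemma~\ref{lem:key_lemma_clipped_SsGD} together with the step-size constraint $\eta_t\lambda_t\leq\eta_0$.

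First I would bound the increments. From Lemma~\ref{lem:key_lemma_clipped_SsGD}, $\|\xi_t^u\|\leq 2\lambda_t$, hence $\|\xi_t^u\|^2\leq 4\lambda_t^2$ and also $\E_t[\|\xi_t^u\|^2]\leq 4\lambda_t^2$; a difference of two nonnegative quantities each at most $4\lambda_t^2$ is itself at most $4\lambda_t^2$ in absolute value. With $\bar{\rho}=2\rho$ and $\eta_t\lambda_t\leq\eta_0$ this gives the uniform bound $|X_t|\leq 4\bar{\rho}(\eta_t\lambda_t)^2\leq 8\rho\eta_0^2=O(\rho\eta_0^2)$, which will supply the $\log(1/\delta)$ term.

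Next I would estimate the conditional variance. Writing $\E_t[X_t^2]=\bar{\rho}^2\eta_t^4\,\mathrm{Var}_t(\|\xi_t^u\|^2)\leq \bar{\rho}^2\eta_t^4\,\E_t[\|\xi_t^u\|^4]$ and peeling off one factor via $\|\xi_t^u\|^2\leq 4\lambda_t^2$, I get $\E_t[\|\xi_t^u\|^4]\leq 4\lambda_t^2\,\E_t[\|\xi_t^u\|^2]\leq 4\lambda_t^2\cdot\frac{10(2-B^{-1})\sigma^p\lambda_t^{2-p}}{B^{p-1}}=O(\sigma^p\lambda_t^{4-p}B^{1-p})$, again by Lemma~\ref{lem:key_lemma_clipped_SsGD}. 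The key algebraic move is $\eta_t^4\lambda_t^{4-p}=(\eta_t\lambda_t)^4\lambda_t^{-p}\leq\eta_0^4\lambda_t^{-p}$, and then $\lambda_t\geq\lambda t^{1/p}$ gives $\lambda_t^{-p}\leq\lambda^{-p}t^{-1}$, so $\E_t[X_t^2]\leq O\!\big(\rho^2\eta_0^4(\sigma/\lambda)^pB^{1-p}\big)\,t^{-1}$. Summing and using $\sum_{t=1}^T t^{-1}=O(\log T)$ bounds the total predictable variation by the deterministic quantity $V_T=O\!\big(\rho^2\eta_0^4(\sigma/\lambda)^pB^{1-p}\log T\big)$.

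Finally I would feed $c=O(\rho\eta_0^2)$ and $V_T$ into the Freedman/Bernstein inequality, obtaining with probability at least $1-\delta/2$ a bound of the form $\sqrt{2V_T\log(2/\delta)}+\tfrac{2}{3}c\log(2/\delta)$, which collapses to $O\!\big(\rho\eta_0^2(\log(1/\delta)+\sqrt{(\sigma/\lambda)^pB^{1-p}\log(1/\delta)\log T}\,)\big)$, i.e.\ the claimed estimate (the $B$-factor entering through the denominator $B^{p-1}$ of Lemma~\ref{lem:key_lemma_clipped_SsGD}, consistent with Lemma~\ref{lem: clipped_SsGD_HPB1_anytime}). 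No single step is genuinely difficult; the one point requiring care is the bookkeeping that turns the growing clipping radius $\lambda_t$ into an integrable $t^{-1}$ decay — this works only because exploiting $\eta_t\lambda_t\leq\eta_0$ inside the variance leaves exactly one surplus factor $\lambda_t^{-p}$, so the $\log T$ and $B^{1-p}$ land precisely in the square-root term rather than inflating the leading $\log(1/\delta)$ term.
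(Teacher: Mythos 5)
Your proposal is correct and follows essentially the same route as the paper's own proof: bound the increments by $O(\rho\eta_0^2)$ via $\|\xi_t^u\|\le 2\lambda_t$ and $\eta_t\lambda_t\le\eta_0$, peel one factor $4\lambda_t^2$ off $\E_t[\|\xi_t^u\|^4]$ to get a conditional variance of order $\rho^2\eta_0^4(\sigma/\lambda)^pB^{1-p}t^{-1}$, sum to $O(\log T)$, and apply the Freedman inequality of Lemma~\ref{app_lem: berenstain_type_ineq}. Your exponent $B^{1-p}$ inside the square root is what the computation actually yields (and matches Lemma~\ref{lem: clipped_SsGD_HPB1_anytime}); the $B^{p-1}$ in the lemma statement appears to be a typo.
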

From Lemmas \ref{lem: clipped_SsGD_HPB1_anytime} and \ref{lem:clipped_SsGD_HPB2_any_time}, we know that the growth rates of the summations of $\bar{\rho}\eta_t\langle\hat{x}_t - x_t,\xi_t^u\rangle$ and $\bar{\rho}\eta_t^2(\|\xi_t^u\|^2 - \mathbb{E}_t[\|\xi_t^u\|^2])$ can both be bounded by $O(\sqrt{\log T})$ with probability $1 - \delta$. For the term $\bar{\rho}\eta_t\langle\hat{x}_t - x_t, \xi_t^b\rangle$, with the choice $\lambda_t = \max\{\lambda t^{1/p}, 2G\}$ and $\eta_t \leq \frac{\eta_0}{\lambda_t}$, we derive the following via the Cauchy-Schwarz inequality, Lemma \ref{lem: bound_norm_hx_x}, Lemma \ref{lem:key_lemma_clipped_SsGD}, and Assumption \ref{assp:G_Lipsch}:  

\[
\bar{\rho}\eta_t\langle\hat{x}_t - x_t, \xi_t^b\rangle \leq \bar{\rho}\eta_t\|\hat{x}_t - x_t\|\|\xi_t^b\| \leq \frac{2\bar{\rho}G\eta_t\lambda_t^{1-p}\sigma^p}{\bar{\rho} - \rho} \leq \frac{2\bar{\rho}G\eta_0(\lambda/\sigma)^p}{(\bar{\rho} - \rho)t}.
\]  

Thus, the summation of $\bar{\rho}\eta_t\langle\hat{x}_t - x_t, \xi_t^b\rangle$ can be upper-bounded by $O(\log T)$.

For the remaining terms, namely $\bar{\rho}\eta_t^2 \left(2\mathbb{E}_t[\|\xi_t^u\|^2] + 2\|\xi_t^b\|^2 + \|\partial_t\|^2\right)$, their summations can be upper-bounded by $O(\log T)$ using similar techniques. To bound the summation of $\eta_t^2\|\partial_t\|^2$, we additionally impose the condition $\eta_t \leq \frac{\eta_0}{G\sqrt{t}}$.

\section{Conclusion}
This paper focuses on high-probability convergence in stochastic weakly convex optimization under the heavy-tailed sub-gradient noises. We first analyze the high-probability convergence of vanilla projected Stochastic sub-gradient Descent (SsGD) for weakly convex optimization under the sub-Weibull sub-gradient noises. We then assume that sub-gradient noises have a bounded $p$-th central moment ($p\in(1,2]$), analyzing both high-probability and in-expectation convergence of projected SsGD with clipping. Our results provide theoretical guarantees for these methods in non-smooth weakly convex settings under weaker noise assumptions, broadening their applicability to real-world heavy-tailed scenarios.


\bibliographystyle{plainnat} 
\bibliography{ref}

\newpage
\appendix
\section{Technical Lemmas}
We give a basic lemma of weak convexity, which will be instrumental in the subsequent analysis.
\begin{lemma}\label{lem:weakly_convex}
    If a closed function $f$ on a convex set $\X$ is $\rho$-weakly convex, the following statements are equilvent:
    \begin{enumerate}
        \item $f(x)+\frac{\rho}{2}\|x\|^2$ is convex
        \item $f(y)\geq f(x)+\langle g,y-x\rangle-\frac{\rho}{2}\|x-y\|^2,\quad\forall x,y\in\X,\, g\in\partial f(x)$
    \end{enumerate}
\end{lemma}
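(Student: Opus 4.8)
The plan is to pass everything through the auxiliary function $h(x) \triangleq f(x) + \frac{\rho}{2}\|x\|^2$, so that statement (1) becomes simply ``$h$ is convex.'' The bridge between the subgradients of $f$ and those of $h$ is the exact sum rule for the Fréchet subdifferential: since $\frac{\rho}{2}\|x\|^2$ is continuously differentiable with gradient $\rho x$, one has $\partial h(x) = \partial f(x) + \rho x$, i.e.\ $g \in \partial f(x)$ if and only if $v \triangleq g + \rho x \in \partial h(x)$. With this identification both statements become properties of the single function $h$, and the whole lemma reduces to the classical fact that convexity of $h$ is equivalent to the global subgradient inequality holding for $h$.

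For the direction (1) $\Rightarrow$ (2) --- the one actually invoked in the convergence proofs --- I would argue as follows. If $h$ is convex then its Fréchet subdifferential coincides with the convex subdifferential, so every $v \in \partial h(x)$ satisfies $h(y) \geq h(x) + \langle v, y-x\rangle$ for all $y \in \X$. Substituting $v = g + \rho x$ with $g \in \partial f(x)$ and expanding gives $f(y) \geq f(x) + \langle g, y-x\rangle + \frac{\rho}{2}\|x\|^2 - \frac{\rho}{2}\|y\|^2 + \rho\langle x, y-x\rangle$. Completing the square turns the quadratic remainder into $-\frac{\rho}{2}\|x-y\|^2$, since $\frac{\rho}{2}\|x\|^2 - \frac{\rho}{2}\|y\|^2 + \rho\langle x, y-x\rangle = -\frac{\rho}{2}\|x-y\|^2$, which is exactly statement (2). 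This direction is entirely elementary.

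For the converse (2) $\Rightarrow$ (1), I would first reverse the algebra above to recover the global inequality $h(y) \geq h(x) + \langle v, y-x\rangle$ for every $v \in \partial h(x)$ and every $y$. Writing statement (2) at the pair $(x,y)$ and again at $(y,x)$ and adding the two inequalities eliminates the function values and yields $\langle g_x - g_y, x-y\rangle \geq -\rho\|x-y\|^2$ for all $g_x \in \partial f(x)$ and $g_y \in \partial f(y)$; translating this back through the sum rule gives $\langle v_x - v_y, x-y\rangle \geq 0$, i.e.\ $\partial h$ is a monotone operator. The conclusion then follows from the standard variational-analysis characterization that a proper lower semicontinuous function on $\R^d$ whose Fréchet subdifferential is monotone is convex.

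I expect the converse to be the only genuine obstacle: the forward direction and all the algebra are routine, whereas passing from the pointwise monotonicity condition back to global convexity of a possibly nonsmooth $h$ is not purely algebraic. It rests on representing $h$ as the supremum of the affine minorants furnished by its subgradients, which in turn needs the Fréchet subdifferential to be nonempty on a dense subset of $\dom h$ (a property of closed functions) together with a limiting argument to cover points carrying no subgradient. If one treats statement (1) as the \emph{definition} of $\rho$-weak convexity, this direction is only required to confirm that (2) is truly equivalent rather than merely implied, and may be quoted from standard convex and variational analysis references.
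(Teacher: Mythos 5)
The paper does not actually prove this lemma: it is stated in the appendix as a standard fact and the text moves immediately to the proof of Lemma~\ref{lem: fundemantal_lem}, so there is no in-paper argument to compare against. Your proof is correct. The reduction to $h(x)=f(x)+\tfrac{\rho}{2}\|x\|^2$ via the exact Fr\'echet sum rule $\partial h(x)=\partial f(x)+\rho x$ (valid because the quadratic is $C^1$) is the right bridge, and your algebra for $(1)\Rightarrow(2)$ — the subgradient inequality for convex $h$ plus completing the square, $\tfrac{\rho}{2}\|x\|^2-\tfrac{\rho}{2}\|y\|^2+\rho\langle x,y-x\rangle=-\tfrac{\rho}{2}\|x-y\|^2$ — checks out; this is the only direction the paper ever uses (it is invoked as step $(c)$ in the proof of Lemma~\ref{app_lem:fundamental_lem} and again in the proof of Lemma~\ref{lem: bound_norm_hx_x}). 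For $(2)\Rightarrow(1)$ you correctly identify that recovering the global affine-minorant inequality for $h$ only constrains $h$ at points where $\partial h$ is nonempty, and that closing the gap requires either the density-plus-lsc limiting argument or the quoted theorem that a proper lsc function with monotone (Fr\'echet) subdifferential is convex (Correa--Jofr\'e--Thibault / Poliquin); citing that result is legitimate and is what a careful write-up would do. Note also that in the paper's own setting the nonemptiness concern disappears, since the standing assumption $\X\subseteq\intdom(\dom f)$ guarantees $\partial f(x)\neq\emptyset$ on $\X$. No gap beyond the one you already flagged and handled by citation.
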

\textbf{Proof of Lemma \ref{lem: fundemantal_lem}}
We restate Lemma \ref{lem: fundemantal_lem} in a more general form to enable its use in analyzing both projected SsGD and projected SsGD with clipping.
\begin{lemma}\label{app_lem:fundamental_lem}
    Suppose that Assumption~\ref{assp:weak_conv},\ref{assp:lower_bound}, \ref{assp:G_Lipsch} and \ref{assp:unbiased} hold. $\{x_t\}$ is the sequence produced by \ref{algo:Projected_SsGD} or \ref{algo:ProjectedClippedSsGD}. Let $g_t$ be the stochastic sub-gradient in Algorithm \ref{algo:Projected_SsGD} or \ref{algo:ProjectedClippedSsGD} (note that in Algorithm \ref{algo:ProjectedClippedSsGD}, $g_t$ is the clipped sub-gradient) and $\xi_t\triangleq g_t-\partial_t$, where $\partial_t\in\partial f(x_t)$. Then $\forall x_t\in[T],\brho>\rho$, we have
    \begin{align}\label{app_eq:fundemental_lemma}
        \frac{(\bar{\rho}-\rho)\eta_t}{\bar{\rho}}\|\nabla f_{1/\hat{\rho}}(x_t)\|^2 \leq \Delta_t-\Delta_{t+1}+\bar{\rho}\eta_t\left\langle\hat{x}_t-x_t,\xi_t\right\rangle+\bar{\rho}\eta_t^2(\|\xi_t\|^2+G^2),
    \end{align}
    where $\Delta_t\triangleq f_{1/\brho}(x_t)-\min_{x\in\X} f(x)$ and $\hx_t\triangleq \arg\min_{x\in\R^d}\{f(x)+\mathbb{I}_\X(x)+\frac{\brho}{2}\|x-x_t\|^2\}$
\end{lemma}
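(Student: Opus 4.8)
The plan is to establish \eqref{app_eq:fundemental_lemma} by a purely pathwise, per-iteration descent argument on the Moreau envelope $\fbrho$, decomposing the (possibly clipped) stochastic subgradient as $g_t = \partial_t + \xi_t$ only at the very end. First I would lower-bound the one-step change $\Delta_t - \Delta_{t+1} = \fbrho(x_t) - \fbrho(x_{t+1})$. Since $\hat{x}_t \in \X$ is feasible for the minimization defining $\fbrho(x_{t+1})$ (though not optimal), the definition of the Moreau envelope gives $\fbrho(x_{t+1}) \leq f(\hat{x}_t) + \frac{\brho}{2}\|\hat{x}_t - x_{t+1}\|^2$, whereas $\fbrho(x_t) = f(\hat{x}_t) + \frac{\brho}{2}\|\hat{x}_t - x_t\|^2$ by definition of $\hat{x}_t$. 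Subtracting yields $\Delta_t - \Delta_{t+1} \geq \frac{\brho}{2}\big(\|\hat{x}_t - x_t\|^2 - \|\hat{x}_t - x_{t+1}\|^2\big)$. Then, using $\hat{x}_t = \mathrm{Proj}_\X(\hat{x}_t)$ together with nonexpansiveness of $\mathrm{Proj}_\X$ on the closed convex set $\X$, I would bound $\|\hat{x}_t - x_{t+1}\|^2 \leq \|\hat{x}_t - x_t + \eta_t g_t\|^2$ and expand to get $\Delta_t - \Delta_{t+1} \geq -\brho\eta_t\langle \hat{x}_t - x_t, g_t\rangle - \frac{\brho}{2}\eta_t^2\|g_t\|^2$.

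The crux of the proof — and the place where the sharp descent coefficient $\frac{\brho-\rho}{\brho}$ (rather than the lossy $\frac{\brho-\rho}{2\brho}$) is produced — is lower-bounding $\langle \partial_t, x_t - \hat{x}_t\rangle$. I would combine two facts: (i) the weak-convexity inequality of Lemma~\ref{lem:weakly_convex}(2) applied to $f$ at $x_t$ with subgradient $\partial_t$, giving $\langle \partial_t, x_t - \hat{x}_t\rangle \geq f(x_t) - f(\hat{x}_t) - \frac{\rho}{2}\|\hat{x}_t - x_t\|^2$; and (ii) the observation that the prox objective $\psi(y) \triangleq f(y) + \mathbb{I}_\X(y) + \frac{\brho}{2}\|y - x_t\|^2$ is $(\brho - \rho)$-strongly convex with minimizer $\hat{x}_t$, so evaluating its strong-convexity lower bound at $y = x_t$ (note $\psi(x_t) = f(x_t)$ and $\psi(\hat{x}_t) = \fbrho(x_t)$) gives $f(x_t) \geq \fbrho(x_t) + \frac{\brho-\rho}{2}\|x_t - \hat{x}_t\|^2$, i.e. $f(x_t) - f(\hat{x}_t) \geq \frac{2\brho-\rho}{2}\|\hat{x}_t - x_t\|^2$. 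Substituting (ii) into (i) yields $\langle \partial_t, x_t - \hat{x}_t\rangle \geq (\brho - \rho)\|\hat{x}_t - x_t\|^2$, and using $\nabla \fbrho(x_t) = \brho(x_t - \hat{x}_t)$ this translates into $\langle \nabla \fbrho(x_t), \partial_t\rangle \geq \frac{\brho-\rho}{\brho}\|\nabla \fbrho(x_t)\|^2$.

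Finally I would assemble the pieces. Writing $g_t = \partial_t + \xi_t$ in the one-step bound splits $-\brho\eta_t\langle \hat{x}_t - x_t, g_t\rangle$ into the descent term $\eta_t\langle \nabla\fbrho(x_t), \partial_t\rangle$ and the noise term $-\brho\eta_t\langle\hat{x}_t - x_t, \xi_t\rangle$; inserting the sharp lower bound from the previous step and rearranging produces the left-hand side of \eqref{app_eq:fundemental_lemma}. For the quadratic term I would use $\|g_t\|^2 = \|\partial_t + \xi_t\|^2 \leq 2\|\partial_t\|^2 + 2\|\xi_t\|^2 \leq 2G^2 + 2\|\xi_t\|^2$ via Assumption~\ref{assp:G_Lipsch}, so that $\frac{\brho}{2}\eta_t^2\|g_t\|^2 \leq \brho\eta_t^2(\|\xi_t\|^2 + G^2)$, matching the stated right-hand side exactly.

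I expect the main obstacle to be step (ii): one must resist bounding $f(x_t) - f(\hat{x}_t)$ merely by optimality of the prox ($\geq \frac{\brho}{2}\|\hat{x}_t - x_t\|^2$), since that route loses a factor of two in the final coefficient; exploiting the full $(\brho-\rho)$-strong convexity of $\psi$ is precisely what delivers the correct constant $\frac{\brho-\rho}{\brho}$. I would also note that the entire argument is deterministic given the realization and never invokes unbiasedness (Assumption~\ref{assp:unbiased}), so it applies verbatim to the clipped iterates of Algorithm~\ref{algo:ProjectedClippedSsGD}, where $g_t$ is biased but $\xi_t = g_t - \partial_t$ with $\partial_t \in \partial f(x_t)$ is still well defined.
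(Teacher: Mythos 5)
Your proposal is correct and follows essentially the same route as the paper's proof: Moreau-envelope suboptimality of $\hat{x}_t$ in the problem defining $f_{1/\bar{\rho}}(x_{t+1})$, nonexpansiveness of $\mathrm{Proj}_\X$, the weak-convexity subgradient inequality, and the $(\bar{\rho}-\rho)$-strong convexity of the prox objective at its minimizer, which together yield the sharp coefficient $\frac{\bar{\rho}-\rho}{\bar{\rho}}$ exactly as in the paper. The only differences are presentational (you isolate $\langle \partial_t, x_t-\hat{x}_t\rangle \geq (\bar{\rho}-\rho)\|\hat{x}_t-x_t\|^2$ as a standalone step), and your observation that Assumption~\ref{assp:unbiased} is never actually invoked in this pathwise bound is consistent with the paper's argument.
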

\begin{proof}
    By the definition of $\hx_{t+1}$, we have
    \begin{align}
        f_{1/\bar{\rho}}(x_{t+1})&=f(\hx_{t+1})+\frac{\brho}{2}\|\hx_{t+1}-x_{t+1}\|^2\nonumber\\
        &\leq f(\hx_t) + \frac{\brho}{2}\|\hx_t-x_{t+1}\|^2\nonumber\\
        &= f(\hx_t) + \frac{\brho}{2}\|\hx_t-\mathrm{Proj}_\X(x_t-\eta_tg_t)\|^2\nonumber\\
        &\overset{(a)}{=}f(\hx_t) + \frac{\brho}{2}\|\mathrm{Proj}_\X(\hx_t)-\mathrm{Proj}_\X(x_t-\eta_tg_t)\|^2\nonumber\\
        &\overset{(b)}{\leq} f(\hx_t) + \frac{\brho}{2}\|\hx_t-x_t+\eta_tg_t\|^2\nonumber\\
        &= f(\hx_t) + \frac{\brho}{2}\|\hx_t-x_t\|^2 + \brho\eta_t\left\langle \hx_t-x_t, g_t \right\rangle + \frac{\brho\eta_t^2}{2}\|g_t\|^2\nonumber\\
        &= f_{1/\brho}(\hx_t) + \brho\eta_t\left\langle \hx_t-x_t, \partial_t \right\rangle + \brho\eta_t\left\langle \hx_t-x_t, \xi_t \right\rangle + \frac{\brho\eta_t^2}{2}\|\xi_t+\partial_t\|^2\nonumber\\
        &\overset{(c)}{\leq} f_{1/\brho}(\hx_t) + \brho\eta_t\left(f(\hx_t)-f(x_t) + \frac{\rho}{2}\|\hx_t-x_t\|^2\right) + \brho\eta_t\left\langle \hx_t-x_t, \xi_t \right\rangle+ \brho\eta_t^2(\|\xi_t\|^2 + G^2)\nonumber\\
        &=f_{1/\brho}(\hx_t) -\brho\eta_t\left[\left(f(x_t)+\frac{\brho}{2}\|x_t-x_t\|^2\right)-\left(f(\hx_t)+\frac{\brho}{2}\|x_t-\hx_t\|^2\right)+\frac{\brho-\rho}{2}\|x_t-\hx_t\|^2\right]\nonumber\\
        &+ \brho\eta_t\left\langle \hx_t-x_t, \xi_t \right\rangle+ \brho\eta_t^2(\|\xi_t\|^2 + G^2)\nonumber\\
        &\overset{(d)}{\leq} f_{1/\brho}(\hx_t) -\brho\eta_t(\brho-\rho)\|\hx_t-x_t\|^2 + \brho\eta_t\left\langle \hx_t-x_t, \xi_t \right\rangle+ \brho\eta_t^2(\|\xi_t\|^2 + G^2)\nonumber\\
        &\overset{(e)}{=} f_{1/\brho}(\hx_t) - \frac{\brho-\rho}{\brho}\eta_t\|\nabla f_{1/\brho}(x_t)\|^2 + \brho\eta_t\left\langle \hx_t-x_t, \xi_t \right\rangle+ \brho\eta_t^2(\|\xi_t\|^2 + G^2)
    \end{align}
    where $(a)$ holds because $\hx_t$ must be in $\X$, $(b)$ holds because the non-expansiveness of the projection, $(c)$ follow the weak convexity of $f$ (see Lemma \ref{lem:weakly_convex}), $(d)$ holds because $\triangleq f(x) + \frac{\brho}{2}\|x-x_t\|^2$ is a $\frac{\brho-\rho}{2}$-strongly convex function. By the definition of strongly convexity, we have
    \begin{align}
        \left(f(x_t)+\frac{\brho}{2}\|x_t-x_t\|^2\right)-\left(f(\hx_t)+\frac{\brho}{2}\|x_t-\hx_t\|^2\right)&\geq \left\langle \hat{\partial}_t + \brho(\hx_t-x_t), x_t-\hx_t\right\rangle+\frac{\brho-\rho}{2}\|x_t-\hx_t\|^2
    \end{align}
    where $\hat{\partial}_t\in\partial f(\hx_t)$. Since $\hx_t$ is the minimizer of $f(x) + \frac{\brho}{2}\|x-x_t\|^2$ over $\X$, the optimality condition implies that the term $\langle \hat{\partial}_t + \brho(\hx_t-x_t), x_t-\hx_t\rangle$ is non-negative. $(e)$ holds because we use $\|\nabla f_{1/\brho}(x_t)\| = \brho\|\hx_t-x_t\|$ Rearranging the terms then completes the proof of Lemma \ref{app_lem:fundamental_lem}.
\end{proof}

\subsection{Proof of Lemma \ref{lem: bound_norm_hx_x}}
\begin{proof}
    Since $\hx = \arg\min_{y\in\R^d}\left\{f(y)+\mathbb{I}_\X(y)+\frac{\brho}{2}\|y-x\|^2\right\}$
    \begin{align}
        f(\hx) + \frac{\brho}{2}\|\hx-x\|^2 \leq f(x) + \frac{\brho}{2}\|x-x\|^2 
    \end{align}
    For any $g\in\partial f(x)$, by the definition of the $\rho$-weakly convexity, we have
    \begin{align}
        f(\hx)-f(x)\geq \langle g, \hx-x\rangle - \frac{\rho}{2}\|\hx-x\|^2
    \end{align}
    Combine these two inequalities, we have
    \begin{align}
        &\frac{\brho-\rho}{2}\|\hx-x\|^2\leq \langle g, x-\hx\rangle\leq \|g\|\|x-\hx\|\nonumber\\
        \Longrightarrow& \|\hx-x\|\leq \frac{2\|g\|}{\brho-\rho}
    \end{align}
\end{proof}

\begin{lemma}{(Theorem 3 in \citep{von_1965inequalities})}\label{app_lem:scalar_batch_size_noise}
Given a probability space $(\Omega,\mathcal{F},P)$, $X_1,X_2,\dots,X_B$ are one-dimiensional random variables defined on this probability space. If $\forall n\in[B]$, $\E[X_n|\sum_{i=1}^{n-1} X_i]=0$ almost surely and $\exists p\in(1,2]$, $\E[|X_n|^p]\leq +\infty$. Then we have
\begin{align}
    \E\left[\left|\sum_{n=1}^B X_n\right|^p\right]\leq (2-B^{-1})\sum_{n=1}^B \E[\left|X_n\right|^p]
\end{align}
    
\end{lemma}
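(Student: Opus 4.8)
The plan is to prove the bound by induction on the number of summands $B$, working with the partial sums $S_n \triangleq \sum_{i=1}^n X_i$ (so $S_0 = 0$ and $S_B$ is the quantity of interest). The engine of the induction is a scalar inequality that, for $1\le p\le 2$, lets one expand $|S_n|^p = |S_{n-1}+X_n|^p$ and isolate a term that is linear in $X_n$; taking the conditional expectation given $S_{n-1}$ then annihilates this linear term because of the hypothesis $\E[X_n\mid S_{n-1}]=0$. The crucial observation is that the conditioning need only be on the scalar partial sum $S_{n-1}$ (rather than on the full history $X_1,\dots,X_{n-1}$): the coefficient of the first-order term is $p\,\mathrm{sgn}(S_{n-1})|S_{n-1}|^{p-1}$, which is a Borel function of $S_{n-1}$ alone, so $\E[X_n\mid S_{n-1}]=0$ is exactly enough to kill it. This is why the weaker ``conditioning on the partial sum'' hypothesis suffices.

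Concretely, I would first establish the elementary inequality: for $1\le p\le 2$ and all $u,v\in\R$,
\[
|u+v|^p \le |u|^p + p\,v\,\mathrm{sgn}(u)\,|u|^{p-1} + C_p\,|v|^p,
\]
where $\mathrm{sgn}(u)|u|^{p-1}$ is the derivative of $t\mapsto|t|^p$ and $C_p$ is an absolute constant (obtained from a one-variable Taylor/convexity analysis of $t\mapsto |t|^p$, using $1\le p\le 2$). Substituting $u=S_{n-1}$, $v=X_n$, taking $\E[\,\cdot\mid S_{n-1}]$, and then a full expectation gives the one-step recursion $\E|S_n|^p \le \E|S_{n-1}|^p + C_p\,\E|X_n|^p$, the cross term having vanished. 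With this recursion I would close the induction on the claim $\E|S_n|^p \le (2-1/n)\sum_{i=1}^n\E|X_i|^p$. The base case $n=1$ is an equality ($2-1=1$). For the step, writing $B_{n-1}=\sum_{i<n}\E|X_i|^p$ and $\beta_n=\E|X_n|^p$, combining the recursion with the hypothesis $\E|S_{n-1}|^p\le (2-\tfrac1{n-1})B_{n-1}$ and comparing against $(2-\tfrac1n)(B_{n-1}+\beta_n)$ reduces to the coefficient inequality $(C_p-2+\tfrac1n)\beta_n \le \tfrac{1}{n(n-1)}B_{n-1}$, which holds for every admissible $B_{n-1}\ge 0$ precisely when the per-step increment constant is at most $2-\tfrac1n$ (the binding case being the first step $n=2$, demanding an increment constant $\le 3/2$).

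The hard part is exactly this last point. The naive pointwise remainder $C_p$ in the scalar inequality can be as large as (close to) $2$ as $p\downarrow 1$, so merely iterating the pointwise bound only yields the crude constant $2$ rather than $2-1/B$. To recover the sharp $2-1/B$ one must not use the scalar bound in its worst-case pointwise form but instead exploit the cancellation that the centering produces \emph{in expectation}: one bounds $\E\big[\,|S_{n-1}+X_n|^p - |S_{n-1}|^p \mid S_{n-1}\big]$ by $(2-\tfrac1n)\,\E[|X_n|^p\mid S_{n-1}]$ directly, averaging over the conditionally centered law of $X_n$ rather than over its worst realization. This expectation-level increment estimate is the refinement carried out by von Bahr and Esseen, so the remaining work is to reproduce their argument in the present conditional setting, after which the telescoping above delivers the stated constant.

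Finally, I would handle the measurability and integrability bookkeeping at the outset — in particular that each $S_n$ has finite $p$-th moment, which follows inductively from the recursion together with the assumed finiteness of $\E|X_n|^p$ — so that all conditional expectations above are legitimate and the cross term is genuinely integrable when it is set to zero.
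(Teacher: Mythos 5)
First, a point of reference: the paper does not prove this lemma at all --- it is imported verbatim as Theorem~3 of von Bahr and Esseen (1965) --- so there is no internal proof to compare against, and your attempt must be judged as a self-contained derivation of that classical result. The part of your argument that is actually carried out is correct: the pointwise inequality $|u+v|^p \le |u|^p + p\,v\,\mathrm{sgn}(u)|u|^{p-1} + C_p|v|^p$ holds for $1\le p\le 2$ with $C_p\le 2^{2-p}\le 2$ (from the $(p-1)$-H\"older continuity of the derivative of $t\mapsto|t|^p$), your observation that conditioning on the scalar partial sum $S_{n-1}$ alone suffices to annihilate the linear term is exactly right, and the integrability bookkeeping goes through via Young's inequality. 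Telescoping then yields $\E|S_B|^p\le \E|X_1|^p+2\sum_{n=2}^B\E|X_n|^p$. But this is the constant-$2$ bound (von Bahr--Esseen's Theorem~2, iterated), not the stated $(2-B^{-1})$ bound: already at $B=2$ your bound $\E|X_1|^p+2\E|X_2|^p$ exceeds $\tfrac32(\E|X_1|^p+\E|X_2|^p)$ whenever $\E|X_2|^p>\E|X_1|^p$.

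The genuine gap is the step you defer. You propose to close the induction via the conditional increment bound $\E\big[|S_{n-1}+X_n|^p-|S_{n-1}|^p\,\big|\,S_{n-1}\big]\le(2-\tfrac1n)\,\E[|X_n|^p\mid S_{n-1}]$, but this is a statement about an arbitrary pair $(U,V)$ with $\E[V\mid U]=0$, so its best constant depends only on $p$ and cannot depend on the step index $n$; demanding it for every $n\ge2$ is exactly demanding the two-term inequality with constant $\tfrac32$, a strict strengthening of von Bahr--Esseen's own Theorem~2 (whose constant is $2$) that you nowhere establish --- indeed you explicitly say the remaining work is to ``reproduce their argument,'' which is circular when the task is to prove their theorem. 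The original derivation of the $2-n^{-1}$ refinement does not proceed through an $n$-dependent per-step constant; it is built on the integral representation $|z|^p=c_p\int_0^\infty(1-\cos(tz))\,t^{-1-p}\,dt$ (valid for $p<2$) and a trigonometric decomposition of $1-\E\cos(tS_n)$, which is the ingredient missing here. For what it is worth, the weaker constant $2$ that you do prove would suffice for every downstream use in this paper (it only perturbs absolute constants in Lemma~\ref{app_lem:general_bias_clipped_rand_vec} and the results built on it, and the paper itself notes that prior works use the constant $2$), but as written your proposal does not establish the lemma as stated.
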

This Lemma can be easily extend to the multi dimensional version
\begin{lemma}\label{app_lem:vector_batch_size_noise}
    Given a probability space $(\Omega,\mathcal{F},P)$, $X_1,X_2,\dots,X_B$ are random vectors defined on this probability space. If $X_1,\dots,X_n$ are i.d.d., and $\exists p\in(1,2]$, $\E[|X_n|^p]\leq +\infty$, we have
    \begin{align}
        \E\left[\left\|\sum_{n=1}^B X_n\right\|^p\right] \leq (2-B^{-1})\sum_{n=1}^B\E[\|X_n\|^p]
    \end{align}
\end{lemma}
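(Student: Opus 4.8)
The plan is to deduce the vector inequality from the scalar one (Lemma~\ref{app_lem:scalar_batch_size_noise}) by testing the vector sum against a random one-dimensional direction. Throughout I read the hypothesis ``$X_1,\dots,X_B$ i.i.d.'' together with the centering $\E[X_n]=0$ that holds for the gradient noise in our setting; centering is genuinely needed here, since for a nonzero constant vector the claimed bound already fails for large $B$. The device is the rotational-symmetry identity: if $g\sim\mathcal{N}(0,I_d)$ and $Z\sim\mathcal{N}(0,1)$, then for every fixed $v\in\R^d$,
\begin{align*}
    \E_g\left[|\langle v,g\rangle|^p\right]=\|v\|^p\,\E\left[|Z|^p\right]=m_p\|v\|^p,\qquad m_p\triangleq\E[|Z|^p]\in(0,+\infty),
\end{align*}
because $\langle v,g\rangle\sim\mathcal{N}(0,\|v\|^2)$. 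Hence $\|v\|^p=m_p^{-1}\E_g[|\langle v,g\rangle|^p]$, which moves the whole problem onto scalar projections.

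First I would apply this identity to $v=\sum_{n=1}^B X_n$ and interchange the (independent) expectations over $X$ and $g$, which is legitimate by Tonelli since the integrand is nonnegative:
\begin{align*}
    \E\left[\left\|\sum_{n=1}^B X_n\right\|^p\right]=\frac{1}{m_p}\,\E_g\E_X\left[\left|\sum_{n=1}^B\langle X_n,g\rangle\right|^p\right].
\end{align*}
Next, condition on $g$. For fixed $g$ the scalars $Y_n\triangleq\langle X_n,g\rangle$ are i.i.d.\ (functions of the i.i.d.\ $X_n$, independent of $g$) with $\E[Y_n\mid g]=\langle\E[X_n],g\rangle=0$; in particular $Y_n$ is independent of $\sum_{i<n}Y_i$, so $\E[Y_n\mid\sum_{i<n}Y_i]=0$ and the martingale-difference hypothesis of Lemma~\ref{app_lem:scalar_batch_size_noise} is met conditionally on $g$. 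Applying that lemma gives
\begin{align*}
    \E_X\left[\left|\sum_{n=1}^B Y_n\right|^p\,\Big|\,g\right]\leq(2-B^{-1})\sum_{n=1}^B\E_X\left[|\langle X_n,g\rangle|^p\,\big|\,g\right].
\end{align*}
Finally I would take $\E_g$ of both sides, use the representation identity backwards on each term ($\E_g\E_X|\langle X_n,g\rangle|^p=m_p\,\E_X\|X_n\|^p$), and cancel the common factor $m_p$ to obtain exactly $(2-B^{-1})\sum_{n=1}^B\E\|X_n\|^p$.

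The only genuinely delicate points are bookkeeping rather than deep: verifying that the projected sequence $(Y_n)$ satisfies the hypotheses of Lemma~\ref{app_lem:scalar_batch_size_noise} (which reduces to independence plus centering), justifying the Tonelli interchange from finiteness of the $p$-th moments, and tracking the constant $m_p=\E[|Z|^p]$ so that it cancels cleanly between the two uses of the representation identity. I expect the ``hard part'' to be nothing more than making the centering assumption explicit and confirming it is what the application supplies; an entirely parallel argument works with $g$ drawn uniformly from the unit sphere $S^{d-1}$ in place of the Gaussian, replacing $m_p$ by $\kappa_{d,p}=\int_{S^{d-1}}|\langle e,u\rangle|^p\,d\sigma(u)$.
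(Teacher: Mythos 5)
Your proof is correct and is essentially the same Gaussian-projection argument the paper uses: introduce a standard normal $\beta\sim\mathcal{N}(0,I)$, reduce to the scalar inequality of Lemma~\ref{app_lem:scalar_batch_size_noise} via the identity $\E\left[|\langle v,\beta\rangle|^p\right]=m_p\|v\|^p$ with $m_p=2^{p/2}\Gamma((p+1)/2)/\sqrt{\pi}$, and cancel $m_p$. Your bookkeeping is in fact slightly cleaner than the paper's—conditioning on the Gaussian direction rather than on the $X_n$ makes the martingale-difference hypothesis of the scalar lemma transparently satisfied—and your observation that the centering $\E[X_n]=0$ is genuinely required (the bound fails for nonzero constant vectors, and the paper's statement omits this hypothesis even though its proof implicitly uses it) is accurate.
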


\begin{proof}
    We first introduce a standard normal random vector $\beta\sim\mathcal{N}(0,I)$. Then define $z_n$ as $\beta^T X_n$. Note that when $X_n$ is given, $z_n\sim\mathcal{N}(0,\|X_n\|^2)$. Hence, $\E[|z_n|^p|X_n]=\frac{2^{p/2}\Gamma((p+1)/2)}{\sqrt{\pi}}\|X_n\|^p\leq+\infty$. Since $X_1,X_2,\dots,X_n$ are i.d.d., $\E[z_n|\sum_{i=1}^{n-1}z_i]=\E[z_n|X_n]=0$. Then by Lemma \ref{app_lem:scalar_batch_size_noise}, we have
    \begin{align}
        \frac{2^{p/2}\Gamma((p+1)/2)}{\sqrt{\pi}}\left\|\sum_{n=1}^B X_n\right\|^p &= \E_{\beta|X_1,\dots,X_n}\left[\left|\beta^T\sum_{n=1}^B X_n\right|^p\right]\nonumber\\
        & = \E_{\beta|X_1,\dots,X_n}\left[\left|\sum_{n=1}^B z_n\right|^p\right]\nonumber\\
        &\leq (2-B^{-1})\sum_{n=1}^B\E_{\beta|X_n}\left[|z_n|^p\right] \nonumber\\
        &=\frac{2^{p/2}\Gamma((p+1)/2)}{\sqrt{\pi}}(2-B^{-1})\sum_{n=1}^B\|X_n\|^p
    \end{align}
    where we denote $\E_{X|Y}[X]$ as the conditional expectation given $Y$. Now we eliminate $2^{p/2}\Gamma((p+1)/2)/\sqrt{\pi}$ on the both sides and take full expectation then we complete the proof. 
\end{proof}

Notably, both \citep{kornilov_2023acceleratedzerothordermethodnonsmooth,hubler_2025gradientclippingnormalizationheavy} derive a similar result: $\mathbb{E}[\|\sum_{n=1}^B X_n\|^p]\leq 2\sum_{n=1}^B\mathbb{E}[\|X_n\|^p]$. 
When $B=1$, this reduces to $\mathbb{E}[\|X\|^p] \leq 2\mathbb{E}[\|X\|^p]$, indicating that the constant factor preceding $\sum_{n=1}^B\mathbb{E}[\|X_n\|^p]$ is insufficient for the trivial case. Lemma \ref{app_lem:vector_batch_size_noise} provides a modified result that addresses this limitation.

\begin{lemma}\label{app_lem:general_bias_clipped_rand_vec}
    Let $Y_1,\dots,Y_B$ be a sequence of i.d.d random vectors and for each $Y_i$, we have $\E[\|Y_i\|^p]\leq \sigma^p$ and $\E[Y_i]=\mu$. Let $X=\frac{1}{B}\sum_{i=1}^BY_i$ and $\hat{X}=\min\left\{1,\frac{\lambda}{\|X\|}\right\}$. For any $\epsilon>0$, let $\lambda\geq(1+\epsilon)\mu$. Then we have following inequalities 
    \begin{align}
        &\|\hat{X}-\E[\hat{X}]\|\leq 2\lambda,\,\|\E[\hat{X}]-\mu\|\leq \frac{(2-B^{-1})\sigma^p\lambda^{1-p}}{B^{p-1}}(1+\epsilon)\nonumber\\
        &\E[\|\hat{X}-\mu\|^2], \,\E[\|\hat{X}-\E[\hat{X}]\|^2],\,
        \|\E[\hat{X}]-\mu\|^2 \leq \frac{(2-B^{-1})\lambda^{2-p}\sigma^p}{B^{p-1}} \left(\left(1+2/\epsilon\right)^2+1\right)\nonumber
    \end{align}
\end{lemma}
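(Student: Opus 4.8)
The plan is to reduce every claimed inequality to a single control on the $p$-th moment of the centered batch average and then exploit the two defining features of clipping: the clipped vector has norm at most $\lambda$, and it disagrees with $X$ only through the excess $(\|X\|-\lambda)_+$. Throughout I write $Z_i\triangleq Y_i-\mu$, so that $X-\mu=\frac1B\sum_{i=1}^B Z_i$ is an average of i.i.d. mean-zero vectors with $\E[\|Z_i\|^p]\le\sigma^p$ (the $p$-BCM hypothesis). First I would apply Lemma~\ref{app_lem:vector_batch_size_noise} to the $Z_i$, together with homogeneity of the norm, to obtain the master estimate $\E[\|X-\mu\|^p]\le (2-B^{-1})B^{1-p}\sigma^p=:\tilde\sigma^p$; this is the only place the batch structure and Assumption~\ref{assp:p_PCM} enter, and every right-hand side in the lemma is $\tilde\sigma^p$ times a power of $\lambda$ and a constant depending on $\epsilon$. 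The bound $\|\hat X-\E[\hat X]\|\le 2\lambda$ is then immediate: clipping gives $\|\hat X\|\le\lambda$ pointwise, Jensen gives $\|\E[\hat X]\|\le\E\|\hat X\|\le\lambda$, and the triangle inequality finishes it.

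For the bias I would use $\E[X]=\mu$, so that $\E[\hat X]-\mu=\E[\hat X-X]$ and hence $\|\E[\hat X]-\mu\|\le\E\|\hat X-X\|=\E[(\|X\|-\lambda)_+]$, since the clipping error vanishes when $\|X\|\le\lambda$ and equals $\|X\|-\lambda$ otherwise. The crucial pointwise inequality is $(\|X\|-\lambda)_+\le\|X-\mu\|^p/(\lambda-\|\mu\|)^{p-1}$: on $\{\|X\|>\lambda\}$ the threshold condition $\lambda\ge(1+\epsilon)\|\mu\|$ forces $\|X-\mu\|\ge\|X\|-\|\mu\|\ge\lambda-\|\mu\|>0$, so the factor $(\|X-\mu\|/(\lambda-\|\mu\|))^{p-1}$ is at least one and may be inserted in front of $\|X-\mu\|\ge\|X\|-\lambda$. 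Taking expectations and lower-bounding $\lambda-\|\mu\|\ge\epsilon\lambda/(1+\epsilon)$ converts the denominator into the claimed power of $\lambda$ and yields $\|\E[\hat X]-\mu\|\le\tilde\sigma^p\lambda^{1-p}\cdot\mathrm{const}(\epsilon)$ after routine constant bookkeeping.

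For the three second-moment quantities I would prove the single estimate $\E[\|\hat X-\mu\|^2]\le(1+2/\epsilon)^2\tilde\sigma^p\lambda^{2-p}$ and then deduce the others for free, since $\E[\|\hat X-\E[\hat X]\|^2]\le\E[\|\hat X-\mu\|^2]$ (the mean minimizes mean-squared deviation) and $\|\E[\hat X]-\mu\|^2\le\E[\|\hat X-\mu\|^2]$ (Jensen); all three are then dominated by the stated $((1+2/\epsilon)^2+1)$ constant. The estimate itself rests on two uniform pointwise bounds: $\|\hat X-\mu\|\le\lambda(2+\epsilon)/(1+\epsilon)\le(1+2/\epsilon)\lambda$, from $\|\hat X\|\le\lambda$ and $\|\mu\|\le\lambda/(1+\epsilon)$; and $\|\hat X-\mu\|\le(1+2/\epsilon)\|X-\mu\|$, trivial on $\{\|X\|\le\lambda\}$ (where $\hat X=X$) and on $\{\|X\|>\lambda\}$ obtained by combining the previous norm bound with $\|X-\mu\|>\epsilon\lambda/(1+\epsilon)$. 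Writing $\|\hat X-\mu\|^2=\|\hat X-\mu\|^{2-p}\|\hat X-\mu\|^p$, applying the first bound to the $(2-p)$-power and the second to the $p$-power, and taking expectation against $\E[\|X-\mu\|^p]\le\tilde\sigma^p$ gives the estimate.

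The main obstacle is that for $p<2$ the quantity $\E[\|X-\mu\|^2]$ may be infinite, so none of the second-moment bounds can be obtained by naively comparing $\|\hat X-\mu\|$ with $\|X-\mu\|$ and invoking a variance; every quadratic quantity must be routed through the finite $p$-th moment, and the only mechanism that makes this possible is the hard cap $\|\hat X\|\le\lambda$, which supplies the missing $(2-p)$-power of $\lambda$. Getting the split of $\|\hat X-\mu\|^2$ into a bounded $(2-p)$-factor times a $p$-integrable factor, and ensuring the threshold condition $\lambda\ge(1+\epsilon)\|\mu\|$ is invoked exactly where it keeps both factors controlled, is the delicate bookkeeping at the heart of the proof; everything else is triangle inequalities, Jensen, and Lemma~\ref{app_lem:vector_batch_size_noise}.
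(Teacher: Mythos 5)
Your proposal is correct in substance and reaches all the stated bounds, but it routes the two nontrivial estimates differently from the paper. For the second-moment bound the paper splits $\E[\|\hat{X}-\mu\|^2]$ over the events $\{\|X\|\geq\lambda\}$ and $\{\|X\|\leq\lambda\}$, controls the first piece with Markov's inequality applied to $\|X-\mu\|^p$ and the second by writing $\|\hat{X}-\mu\|^{2-p}\|X-\mu\|^p$, and then has to verify that the resulting constant $g(p,\epsilon)=((1+\epsilon)/\epsilon)^p+((1+\epsilon)/(2+\epsilon))^p$ is maximized at $p=2$ to land on $(1+2/\epsilon)^2+1$. Your interpolation argument --- the two uniform pointwise bounds $\|\hat{X}-\mu\|\leq(1+2/\epsilon)\lambda$ and $\|\hat{X}-\mu\|\leq(1+2/\epsilon)\|X-\mu\|$, multiplied with exponents $2-p$ and $p$ --- collapses this to a single line, avoids the monotonicity-in-$p$ computation entirely, and yields the slightly sharper constant $(1+2/\epsilon)^2$; deducing the other two second-moment bounds from variance-minimization and Jensen is exactly what the paper does. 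For the bias term the paper uses H\"older's inequality against the indicator plus Markov, whereas your pointwise inequality $(\|X\|-\lambda)_+\leq\|X-\mu\|^p/(\lambda-\|\mu\|)^{p-1}$ is a clean deterministic substitute; both derivations, when the Markov step is carried out exactly, produce the constant $((1+\epsilon)/\epsilon)^{p-1}$ rather than the stated $(1+\epsilon)$.

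That last point is the one place where your "routine constant bookkeeping" does not quite close: $((1+\epsilon)/\epsilon)^{p-1}\leq 1+\epsilon$ fails for small $\epsilon$ (e.g.\ $\epsilon=0.1$, $p=2$ gives $11$ versus $1.1$), so the bias bound with the advertised factor $(1+\epsilon)$ is only recovered for $\epsilon\geq 1$. You should not be penalized for this: the paper's own derivation silently drops a factor of $\epsilon^{p}$ in its Markov step, which is the only reason it arrives at $(1+\epsilon)^{p-1}\leq 1+\epsilon$, and the lemma is invoked downstream exclusively with $\epsilon=1$ (i.e.\ $\lambda_t\geq 2G$), where both your constant and the paper's coincide and every subsequent estimate survives. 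It would be worth stating the bias bound with the honest factor $((1+\epsilon)/\epsilon)^{p-1}$, or restricting the lemma to $\epsilon\geq 1$.
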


We provide a proof using the technique in prior works \citep{liu_2023stochasticnonsmoothconvexoptimization} and \citep{sadiev2023highprobabilityboundsstochasticoptimization}. Notice that here we slightly relax the condition $\lambda \geq 2G$ as $\lambda\geq(\epsilon+1)G$ to more generalize the results.

\begin{proof}
    Note that 
    \begin{align}
        \E[\norm[p]{X-\mu}] = \E\left[\norm[p]{\frac{1}{B}\sum_{i=2}^B(Y_i-\mu)}\right]&=\frac{1}{B^p}\E\left[\norm[p]{\sum_{i=1}^B(Y_i-\mu)}\right]\nonumber\\
        &\leq \frac{2-B^{-1}}{B^p}\E\left[\sum_{i=1}^B\E[\norm[p]{Y_i-\mu}]\right]\nonumber\\
        &\leq \frac{(2-B^{-1})\sigma^p}{B^{p-1}}
    \end{align}
    where the second inequality holds because we use Lemma \ref{app_lem:vector_batch_size_noise}.

    First, we can easily check
    \begin{align}
        \|\hat{X}-\E[\hat{X}]\|\leq \|\hat{X}\|+\|\E[\hat{X}]\| \leq 2\lambda 
    \end{align}
    Then we analysis $\E[\|\hat{X}-\mu\|^2]$
    \begin{align}
        \E[\|\hat{X}-\mu\|^2] &= \E[\|\hat{X}-\mu\|^21_{\|X\|\geq\lambda}+\|\hat{X}-\mu\|^{2-p}\|\hat{X}-\mu\|^p1_{\|X\|\leq\lambda}]\nonumber\\
        &= \E[\|\hat{X}-\mu\|^21_{\|X\|\geq\lambda}]+ \E[\|\hat{X}-\mu\|^{2-p}\|X-\mu\|^p1_{\|X\|\leq\lambda}]\nonumber\\
        &\overset{(a)}{\leq} \E[\|\hat{X}-\mu\|^21_{\|X-\mu\|\geq(\epsilon\lambda)/(1+\epsilon)}] + (2-B^{-1})\sigma^pB^{1-p}\E[\|\hat{X}-\mu\|^{2-p}]\nonumber\\
        &\overset{(b)}{\leq} \left(\frac{\lambda(2+\epsilon)}{1+\epsilon}\right)^2\E[1_{\|X-\mu\|\geq(\epsilon\lambda)/(1+\epsilon)}] + \left(\frac{\lambda(2+\epsilon)}{1+\epsilon}\right)^{2-p}\frac{(2-B^{-1})\sigma^p}{B^{p-1}}\nonumber\\
        &\overset{(c)}{\leq} \frac{(2-B^{-1})\lambda^{2-p}\sigma^p}{B^{p-1}}\left(\frac{2+\epsilon}{1+\epsilon}\right)^2 \left(\left(\frac{1+\epsilon}{\epsilon}\right)^p+\left(\frac{1+\epsilon}{2+\epsilon}\right)^p\right)
    \end{align}
    where $(a)$ holds because 
    \begin{align}
        &\lambda\leq \|X\| = \|X-\mu+\mu\| \leq \|X-\mu\|+\|\mu\|\leq \|X-\mu\|+\frac{\lambda}{1+\epsilon}\nonumber\\
        &\Longrightarrow \|X-\mu\|\geq \frac{\epsilon\lambda}{1+\epsilon}\Longrightarrow 1_{\|X\|\geq\lambda}\leq 1_{\|X-\mu\|\geq (\epsilon\lambda)/(1+\epsilon)}
    \end{align}
    $(b)$ holds because $\|\hat{X}-\mu\|\leq \|\hat{X}\| + \|\mu\|\leq \lambda + \lambda/(1+\epsilon)$ and $(c)$ holds because by the Markov's inequality, we have
    \begin{align}
        \E[1_{\|X-\mu\|\leq (\epsilon\lambda)/(1+\epsilon)}]
        &= P\{\|X-\mu\|\geq (\epsilon\lambda)/(1+\epsilon)\} \nonumber\\
        &= P\{\|X-\mu\|^p\geq (\epsilon\lambda)^p/(1+\epsilon)^p\}\nonumber\\
        &\leq \frac{\E[\|X-\mu\|^p]}{(\epsilon\lambda)^p/(1+\epsilon)^p} \leq \frac{(2-B^{-1})(1+\epsilon)^p\sigma^p}{\lambda^pB^{p-1}}
    \end{align}
    Note that $\forall \epsilon>0$, the function $g(p,\epsilon)\triangleq ((1+\epsilon)/\epsilon)^p+((1+\epsilon)/(2+\epsilon))^p$ is increasing in $p\in(1,2]$, 
    \begin{align}
        \frac{dg(p,\epsilon)}{dp} &= \left(\frac{1+\epsilon}{\epsilon}\right)^p\log\left(\frac{1+\epsilon}{\epsilon}\right) + \left(\frac{1+\epsilon}{2+\epsilon}\right)^p\log\left(\frac{1+\epsilon}{2+\epsilon}\right)\geq 0\nonumber\\
        &\Longleftrightarrow \left(\frac{2}{\epsilon}+1\right)^p \geq 1\geq \frac{\log\left(\frac{2+\epsilon}{1+\epsilon}\right)}{\log\left(\frac{1+\epsilon}{\epsilon}\right)}
    \end{align}
    Hence let $p=2$, we get
    \begin{align}
        \E[\|\hat{X}-\mu\|^2]\leq \frac{(2-B^{-1})\lambda^{2-p}\sigma^p}{B^{p-1}} \left((1+2/\epsilon)^2+1\right)
    \end{align}
We also notice that
\begin{align}
    \E[\|\hat{X}-\E[\hat{X}]\|^2] &= \E[\|\|\hat{X}-\mu+\mu-\E[\hat{X}]\|^2]\nonumber\\
    &\leq \E[\|\hat{X}-\mu\|^2+\|\mu-\E[\hat{X}]\|^2+2\langle\hat{X}-\mu,\mu-\E[\hat{X}]\rangle]\nonumber\\
    &=\E[\|\hat{X}-\mu\|^2] - \E[\|\mu-\E[\hat{X}]\|^2]\nonumber\\
    &\leq \E[\|\hat{X}-\mu\|^2]\leq
    \frac{(2-B^{-1})\lambda^{2-p}\sigma^p}{B^{p-1}} \left((1+2/\epsilon)^2+1\right)
\end{align}
and 
\begin{align}
    \|\E[\hat{X}-\mu]\|^2\overset{\text{Jensen's ineq}}{\leq} \E[\|\hat{X}-\mu\|^2]\leq \frac{(2-B^{-1})\lambda^{2-p}\sigma^p}{B^{p-1}} \left((1+2/\epsilon)^2+1\right)
\end{align}
Finally, we want to upper bound $\|\E[\hat{X}-\mu]\|$
\begin{align}
    \|\E[\hat{X}]-\mu\| = \|\E[\hat{X}-X]\|&\leq \E[\|\hat{X}-X\|]\nonumber\\
    &= \E[\norm{\min\{1,\lambda/\|X\|\}X-X}]\nonumber\\
    &= \E[\|\min\{0,\lambda/\|X\|-1\}X\|1_{\|X\|\geq\lambda}]\nonumber\\
    &+ \E[\|\min\{0,\lambda/\|X\|-1\}X\|1_{\|X\|\leq\lambda}]\nonumber\\
    &= \E[\|\min\{0,\lambda/\|X\|-1\}X\|1_{\|X\|\geq\lambda}]\nonumber\\
    &= \E[(1-\lambda/\|X\|)\|X\|1_{\|X\|\geq\lambda}]\nonumber\\
    &\leq \E[(\|X\|-\lambda)1_{\|X\|\geq(\epsilon\lambda)/(1+\epsilon)}]\nonumber\\
    &\overset{(a)}{\leq} \E[\|X-\mu\|1_{\|X\|\geq(\epsilon\lambda)/(1+\epsilon)}]\nonumber\\
    &\overset{(b)}{\leq} (\E[\|X-\mu\|^p])^{\frac{1}{p}}(\E[1_{\|X\|\geq(\epsilon\lambda)/(1+\epsilon)}])^{\frac{p-1}{p}}\nonumber\\
    &\leq \frac{(2-B^{-1})^{\frac{1}{p}}\sigma}{B^{(p-1)/p}}(\E[1_{\|X\|\geq(\epsilon\lambda)/(1+\epsilon)}])^{\frac{p-1}{p}}\nonumber\\
    &\leq \frac{(2-B^{-1})\sigma^p\lambda^{1-p}}{B^{p-1}}(1+\epsilon)^{p-1}\nonumber\\
    &\leq \frac{(2-B^{-1})\sigma^p\lambda^{1-p}}{B^{p-1}}(1+\epsilon)
\end{align}
where $(a)$ holds because $\|X\|-\lambda\leq\|X-\E[X]\|+\|\E[X]\|-\lambda \leq \|X-\mu\| - (\epsilon\lambda)/(1+\epsilon)\leq \|X-\mu\| $ and $(b)$ holds because we use the Hölder's inequality.

\end{proof}

\section{Concentration Inequalities}
Here, we list several powerful concentration inequalities to help us to facilitate the analysis of the high-probability convergence of Algorithm \ref{algo:Projected_SsGD} and \ref{algo:ProjectedClippedSsGD}.

\begin{theorem}{(Theorem 11 in \citep{madden_2024})}\label{app_thm:sub_weibull_freedman_ineq}
Given a filtered probability space $(\Omega,\mathcal{F},(\mathcal{F}_t),P)$. Let $\{X_t\}$ and $\{\sigma_t\}$ be adapted to $\mathcal{F}_t\subset \mathcal{F}$, where for any $t\in[T]$, $\E[X_t|\mathcal{F}_{t-1}]=0$ and $\sigma_t$ is nonnegative almost surely. If $X_t$ is $\sigma_{t-1}$ - sub-Weibull($\theta$) with $\theta\geq1/2$, i.e.,
\begin{align}
    \E\left[\exp\left\{(|X_t|/\sigma_{t-1})^{1/\theta}\right\}|\mathcal{F}_{t-1}\right]\leq 2
\end{align}
and we additionally require $\sigma_{t-1}\leq M_t$ when $ \theta>1/2$. Then, for any $x$,$\beta\geq0$ and $\varepsilon\in(0,1)$ we have following inequality.
\begin{align}
    P\left(\bigcup_{\tau\in[T]}\left\{\sum_{t=1}^\tau X_t\geq x\,and\,\sum_{t=1}^\tau a \sigma_{t-1}^2\leq \alpha\sum_{t=1}^\tau X_t+\beta \right\}\right)\leq \exp(-\lambda x+2\lambda^2\beta)+2\varepsilon,
\end{align}
where we require $\alpha\geq b\max_{t\in[T]}M_t$ and $\lambda\in[0,\frac{1}{2\alpha}]$. $a$ and $b$ are defined as following:
\begin{align}
    a &= \left\{\begin{aligned}
        &2,\quad \theta = \frac{1}{2}\\
        &(4\theta)^{2\theta}e^2,\quad \theta\in(\frac{1}{2},1]\\
        &(2^{2\theta+1}+2)\Gamma(2\theta+1) + \frac{2^{3\theta}\Gamma(3\theta+1)}{3\log(T/\varepsilon)^{\theta-1}}, \quad \theta>1
    \end{aligned}\right.\\
    b &= \left\{\begin{aligned}
        &0,\quad \theta = \frac{1}{2}\\
        &(4\theta)^\theta,\quad \theta \in (\frac{1}{2},1]\\
        &2\log(T/\varepsilon)^{\theta-1},\quad \theta > 1
    \end{aligned}\right.
\end{align}
\end{theorem}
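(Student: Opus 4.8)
The plan is to establish this time-uniform Freedman-type bound by the exponential-supermartingale method combined with Ville's maximal inequality, the one genuinely new ingredient being a \emph{truncation} step that bypasses the absence of a moment generating function when $\theta>1$ (the obstruction noted after Lemma~\ref{lem:sub_weibull_freedman_ineq}). First I would record the sub-Weibull moment estimates that feed everything else: applying Markov's inequality to the defining bound $\E[\exp\{(|X_t|/\sigma_{t-1})^{1/\theta}\}\mid\mathcal{F}_{t-1}]\le 2$ gives the conditional tail bound $\mathbb{P}(|X_t|>u\mid\mathcal{F}_{t-1})\le 2\exp\{-(u/\sigma_{t-1})^{1/\theta}\}$, and integrating this by the layer-cake formula and substituting $v=(u/\sigma_{t-1})^{1/\theta}$ yields $\E[|X_t|^k\mid\mathcal{F}_{t-1}]\le 2\,\sigma_{t-1}^{k}\,\Gamma(\theta k+1)$. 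These are exactly the estimates that will later produce the $\Gamma(2\theta+1)$ and $\Gamma(3\theta+1)$ factors in the constant $a$.

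Next I would truncate. Fix a radius $r_t$ chosen so that $\mathbb{P}(|X_t|>r_t\mid\mathcal{F}_{t-1})\le 2\varepsilon/T$ (the tail bound makes $r_t\asymp\sigma_{t-1}\log(T/\varepsilon)^{\theta}$ the natural choice), and define the good event $\mathcal{G}=\{\forall t\in[T],\ |X_t|\le r_t\}$, which by a union bound satisfies $\mathbb{P}(\mathcal{G}^c)\le 2\varepsilon$ — this accounts for the additive $2\varepsilon$ in the conclusion. On $\mathcal{G}$ each $X_t$ coincides with its truncation $Y_t=X_t\mathbf{1}\{|X_t|\le r_t\}$, which I would center as $\tilde Y_t=Y_t-\E[Y_t\mid\mathcal{F}_{t-1}]$, noting that the correction $\E[Y_t\mid\mathcal{F}_{t-1}]=-\E[X_t\mathbf{1}\{|X_t|>r_t\}\mid\mathcal{F}_{t-1}]$ is controlled by the tail and absorbed into the constants. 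The hypothesis $\sigma_{t-1}\le M_t$ for $\theta>1/2$ enters precisely here, since a (near-)deterministic truncation level is what makes both the union bound and the subsequent MGF estimate uniform in $t$.

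The heart of the argument is a one-step conditional MGF bound $\E[\exp(\lambda\tilde Y_t)\mid\mathcal{F}_{t-1}]\le\exp(a\lambda^{2}\sigma_{t-1}^{2})$, valid for every $\lambda\in[0,1/(2\alpha)]$ once $\alpha\ge b\max_t M_t$. I would obtain it from the expansion $\E[\exp(\lambda\tilde Y_t)\mid\mathcal{F}_{t-1}]=1+\sum_{k\ge 2}\frac{\lambda^{k}}{k!}\E[\tilde Y_t^{k}\mid\mathcal{F}_{t-1}]$: the $k=2$ and $k=3$ terms, evaluated with the moment bounds of the first step, contribute the $\Gamma(2\theta+1)$ and $\Gamma(3\theta+1)$ pieces of $a$, while the higher-order terms are dominated using the truncation bound $|\tilde Y_t|\lesssim r_t$ and summed as a geometric series whose convergence forces the admissible range of $\lambda$ and hence the definition of $b$. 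The three regimes fall out of this calibration: $b=0$ with no constraint when $\theta=1/2$ (a genuine sub-Gaussian MGF), a fixed local range for $\theta\in(1/2,1]$, and a range shrinking like $1/\log(T/\varepsilon)^{\theta-1}$ for $\theta>1$. I expect that choosing the truncation radius so as to simultaneously deliver the $2\varepsilon$ tail term and keep this series convergent — thereby pinning down the exact constants $a$ and $b$ — will be the main obstacle.

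Finally, with the MGF bound in hand I would form $Z_\tau=\exp(\lambda\sum_{t=1}^{\tau}\tilde Y_t-\lambda^{2}\sum_{t=1}^{\tau}a\sigma_{t-1}^{2})$, which satisfies $\E[Z_\tau\mid\mathcal{F}_{\tau-1}]\le Z_{\tau-1}$ with $Z_0=1$, and apply Ville's maximal inequality to obtain the uniform-in-$\tau$ bound $\mathbb{P}(\exists\tau:\ \lambda\sum_{t\le\tau}\tilde Y_t-\lambda^{2}\sum_{t\le\tau}a\sigma_{t-1}^{2}\ge s)\le e^{-s}$. On the event in the statement, the self-bounding constraint $\sum_{t\le\tau}a\sigma_{t-1}^{2}\le\alpha\sum_{t\le\tau}X_t+\beta$ lets me replace the quadratic-variation term, and on $\mathcal{G}$ I replace $\sum\tilde Y_t$ by $\sum X_t$ up to the centering correction; using $\lambda\le 1/(2\alpha)$ to ensure $1-\lambda\alpha\ge 1/2$ together with $\sum X_t\ge x$ then converts the Ville bound into $\exp(-\lambda x+2\lambda^{2}\beta)$. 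Adding $\mathbb{P}(\mathcal{G}^{c})\le 2\varepsilon$ yields the stated inequality.
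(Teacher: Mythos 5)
The paper does not actually prove this statement: it is imported verbatim as Theorem~11 of \citep{madden_2024} and used as a black box, so there is no in-paper proof to compare against. That said, your outline is the standard route for results of this type and, as far as the cited source is concerned, essentially the actual one: conditional moment bounds $\E[|X_t|^k\mid\mathcal{F}_{t-1}]\le 2\sigma_{t-1}^k\Gamma(\theta k+1)$ from the layer-cake formula (these are indeed where $\Gamma(2\theta+1)$ and $\Gamma(3\theta+1)$ enter $a$), truncation at radius of order $\sigma_{t-1}\log(T/\varepsilon)^{\theta}$ to manufacture the additive $2\varepsilon$ and restore a usable moment generating function when $\theta>1$, a Bernstein-type one-step MGF bound whose radius of validity dictates $b$ and whose coefficient dictates $a$, Ville's maximal inequality for the resulting exponential supermartingale (which is what makes the bound uniform over $\tau\in[T]$), and finally the self-bounding substitution $a\sum_{t\le\tau}\sigma_{t-1}^2\le\alpha\sum_{t\le\tau}X_t+\beta$ together with $\lambda\le 1/(2\alpha)$ to absorb the quadratic-variation term. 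Your calibration of the three regimes of $b$ against the MGF radius is also the right picture. Two points you gesture at would need genuine care in a full write-up: (i) the centering correction $\E[Y_t\mid\mathcal{F}_{t-1}]=-\E[X_t\mathbf{1}\{|X_t|>r_t\}\mid\mathcal{F}_{t-1}]$ is a deterministic drift that accumulates over $T$ steps and must be shown to be dominated by the variance-proxy term, not merely ``absorbed into the constants''; and (ii) the final substitution as written yields an exponent closer to $-\lambda x/2+\lambda^2\beta$ than to $-\lambda x+2\lambda^2\beta$, so recovering the stated form requires running the supermartingale at a rescaled parameter and is precisely the $a$--$b$--$\lambda$ bookkeeping you correctly identify as the main obstacle. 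Neither is a conceptual gap; the plan is sound as a reconstruction of the cited theorem.
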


\subsection{Proof of Lemma \ref{lem:sub_weibull_freedman_ineq}}

 Lemma \ref{lem:sub_weibull_freedman_ineq} can be directly derived from Theorem \ref{app_thm:sub_weibull_freedman_ineq}.

 \begin{proof}
     Let $\beta=0$
\begin{align}
    P\left(\sum_{t=1}^T X_t \leq x + \frac{a}{\alpha}\sum_{t=1}^T \sigma_{t-1}^2 \right)&= 1-P\left(\sum_{t=1}^T X_t > x + \frac{a}{\alpha}\sum_{t=1}^T \sigma_{t-1}^2\right)\nonumber\\
    &\geq 1 - P\left(\sum_{t=1}^T X_t \geq x\, and \, a\sum_{t=1}^T \sigma_{t-1}^2\leq \alpha\sum_{t=1}^T X_t\right)\nonumber\\
    &\geq 1 - P\left(\bigcup_{\tau\in[T]}\left\{\sum_{t=1}^\tau X_t\geq x\,and\,\sum_{t=1}^\tau a \sigma_{t-1}^2\leq \alpha\sum_{t=1}^\tau X_t \right\}\right)\nonumber\\
    &\geq 1 - \exp(-\lambda x)-2\varepsilon
\end{align}
Let $\lambda = \frac{1}{2\alpha}$, $\varepsilon = \frac{\delta}{4}$ and $x = 2\alpha\log(\frac{2}{\delta})$, we finish the proof.
\end{proof}

\begin{lemma}{(Theorem 1 in \citep{Vladimirova_2020})}\label{app_lem:non_martingale_sub_weibull_freedman_ineq}
    Given a sequence of random variables $X_1,X_2,\dots,X_T$. For any $t\in[T]$, $X_t$ is $\sigma_t$-sub-Weibull($\theta$) ($\theta\geq 1/2$). Then, $\forall\gamma\geq0$, we have
    \begin{align}
        P\left(\left|\sum_{t=1}^T X_t\right|\geq\gamma\right)\leq 2\exp\left\{-\left(\frac{\gamma}{v(\theta)\sum_{t=1}^T\sigma_t}\right)^{1/\theta}\right\}, 
    \end{align}
    where 
    \begin{align}
        v(\theta) = \left\{\begin{aligned}
            &(4e)^\theta,\quad \theta\leq1\\
            &2(2e\theta)^\theta,\quad \theta\geq1
        \end{aligned}\right.
    \end{align}
\end{lemma}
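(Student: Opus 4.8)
The plan is to prove the bound by the moment method, since the hypotheses impose neither independence nor a martingale structure on $\{X_t\}$; the only structural tool available is the triangle inequality for $L^m$ norms. First I would convert the defining condition $\E[\exp\{(|X_t|/\sigma_t)^{1/\theta}\}]\leq 2$ into a polynomial moment bound. Writing $Y_t=(|X_t|/\sigma_t)^{1/\theta}\geq 0$ and using $e^{Y_t}\geq Y_t^{n}/n!$ termwise gives $\E[Y_t^{n}]\leq n!\,\E[e^{Y_t}]\leq 2\,n!$ for every integer $n$, i.e. $\E[|X_t|^{n/\theta}]\leq 2\,n!\,\sigma_t^{n/\theta}$. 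Interpolating to real exponents through the Gamma function (log-convexity of moments) yields the equivalent sub-Weibull moment characterization $(\E[|X_t|^{m}])^{1/m}\leq c(\theta)\,\sigma_t\,m^{\theta}$ for all $m\geq 1$, where $c(\theta)$ is extracted from $\Gamma(\theta m+1)^{1/m}$ via Stirling's formula; this is exactly the moment-based definition recorded in \citep{Vladimirova_2020,vladimirova2020sub}.

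Next I would pass to the sum $S=\sum_{t=1}^{T}X_t$. Minkowski's inequality holds for every $m\geq 1$ irrespective of dependence, so
\begin{align*}
\left(\E[|S|^{m}]\right)^{1/m}\leq \sum_{t=1}^{T}\left(\E[|X_t|^{m}]\right)^{1/m}\leq c(\theta)\,m^{\theta}\sum_{t=1}^{T}\sigma_t.
\end{align*}
Thus $S$ is itself sub-Weibull($\theta$) with parameter proportional to $\sum_t\sigma_t$, and all the remaining work is confined to tracking the constant.

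The final step turns this moment bound back into a tail estimate. By Markov's inequality applied to $|S|^{m}$,
\begin{align*}
P(|S|\geq\gamma)\leq \frac{\E[|S|^{m}]}{\gamma^{m}}\leq\left(\frac{c(\theta)\,m^{\theta}\sum_t\sigma_t}{\gamma}\right)^{m},
\end{align*}
and I would optimize the right-hand side over $m$. Taking logarithms and differentiating shows the near-optimal order is $m^{\star}=e^{-1}\big(\gamma/(c(\theta)\sum_t\sigma_t)\big)^{1/\theta}$, at which the exponent collapses to $-\theta m^{\star}$, producing a stretched-exponential tail of the form $\exp\{-(\gamma/(v(\theta)\sum_t\sigma_t))^{1/\theta}\}$ with $v(\theta)=c(\theta)(e/\theta)^{\theta}$. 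The prefactor $2$ then comes from a union bound over the two one-sided events $\{S\geq\gamma\}$ and $\{-S\geq\gamma\}$ (each $-X_t$ is sub-Weibull with the same parameter), and one must separately check that $m^{\star}\geq 1$, falling back on $m=1$ (where the claim is trivial) when $\gamma$ is too small.

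The main obstacle is the regime $\theta>1$. There $1/\theta<1$, so the Young function $x\mapsto e^{x^{1/\theta}}-1$ is nonconvex and the associated Orlicz functional is only a quasi-norm---the clean triangle inequality on the sub-Weibull norm fails, which is precisely why I route the argument through $L^m$ norms rather than the Orlicz norm directly. For $\theta\leq 1$ the Orlicz norm is a genuine norm and a direct triangle-inequality route gives the lighter constant $(4e)^{\theta}$, whereas for $\theta\geq 1$ the moment method is forced and the Stirling estimate of $\Gamma(\theta m+1)^{1/m}$ contributes an extra $\theta^{\theta}$ factor, explaining the heavier $2(2e\theta)^{\theta}$. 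Pinning down these explicit prefactors---matching the constants in the moment equivalence, in the tail optimization, and in the admissibility constraint $m\geq 1$---is the delicate, constant-chasing part, whereas the structural skeleton (moment characterization, Minkowski, then Markov plus optimization) is routine.
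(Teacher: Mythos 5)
The paper gives no proof of this lemma: it is imported verbatim as Theorem 1 of \citep{Vladimirova_2020} and only used downstream through Corollary~\ref{app_coro:non_martingale_sub_Weibull_freedman_ineq}, so there is no internal argument to compare against. Your moment-method proof is the standard route to that external result and is structurally sound: the chain (i) MGF-type condition implies $(\E[|X_t|^m])^{1/m}\le c(\theta)\sigma_t m^{\theta}$ via $e^y\ge y^n/n!$ and Gamma-function interpolation, (ii) Minkowski for the sum --- correctly identified as the step that makes the statement dependence-free --- and (iii) Markov on $|S|^m$ optimized over $m$, is exactly how such bounds are established, and your computation of $m^{\star}=e^{-1}\bigl(\gamma/(c(\theta)\sum_t\sigma_t)\bigr)^{1/\theta}$ and of the resulting $v(\theta)=c(\theta)(e/\theta)^{\theta}$ checks out. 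Two points remain before this is a complete proof of the statement \emph{as written}. First, the stated $v(\theta)$ must also cover the regime $m^{\star}<1$, where Markov at $m=1$ is vacuous and the claim holds only because the right-hand side exceeds $1$; this forces $v(\theta)\ge c(\theta)(e/\log 2)^{\theta}$ rather than $c(\theta)(e/\theta)^{\theta}$, and one must verify that this is still dominated by $(4e)^{\theta}$ resp.\ $2(2e\theta)^{\theta}$ (it is, with $c(\theta)$ of order $2\Gamma(\theta+1)$, but you do not carry out this check). Second, your remark about $\theta\le 1$ can be sharpened: there $x\mapsto e^{x^{1/\theta}}-1$ is convex, the Orlicz functional is a genuine norm, and the triangle inequality plus Markov already give the tail bound with $v=1$, so the $(4e)^{\theta}$ branch is immediate without any moment computation. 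Neither point is a structural gap; as you say, they are constant-chasing, and the skeleton of your argument is the right one.
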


\begin{corollary}\label{app_coro:non_martingale_sub_Weibull_freedman_ineq}
      Suppose that $X_1,X_2,\dots,X_T$ satisfy the conditions in Lemma \ref{app_lem:non_martingale_sub_weibull_freedman_ineq} With the probability at least $1-\delta$, the following inequality holds
     \begin{align}
         |\sum_{t=1}^T X_t|\leq v(\theta)\sum_{t=1}^T \sigma_t\log(\frac{2}{\delta})^\theta,
     \end{align}
     where $v(\theta)$ is as defined in Lemma \ref{app_lem:non_martingale_sub_weibull_freedman_ineq}.
\end{corollary}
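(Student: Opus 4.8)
The plan is to obtain the high-probability bound by directly inverting the tail estimate of Lemma~\ref{app_lem:non_martingale_sub_weibull_freedman_ineq}: I would choose the deviation level $\gamma$ so that the right-hand side of that lemma's probability bound equals the prescribed failure probability $\delta$, and then read off the complementary event. Concretely, I take as given that for every $\gamma\geq0$,
\[
P\left(\left|\sum_{t=1}^T X_t\right|\geq\gamma\right)\leq 2\exp\left\{-\left(\frac{\gamma}{v(\theta)\sum_{t=1}^T\sigma_t}\right)^{1/\theta}\right\},
\]
so the entire task reduces to a scalar calculation in $\gamma$.

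The key step is to solve $2\exp\{-(\gamma/(v(\theta)\sum_{t=1}^T\sigma_t))^{1/\theta}\}=\delta$ for $\gamma$. Dividing by $2$ and taking logarithms gives $(\gamma/(v(\theta)\sum_{t=1}^T\sigma_t))^{1/\theta}=\log(2/\delta)$, and raising both sides to the power $\theta$ yields the explicit threshold
\[
\gamma^\star \;=\; v(\theta)\sum_{t=1}^T\sigma_t\,\Big(\log\tfrac{2}{\delta}\Big)^{\theta}.
\]
Substituting $\gamma=\gamma^\star$ back into the tail bound shows $P(|\sum_{t=1}^T X_t|\geq\gamma^\star)\leq\delta$; passing to the complementary event then delivers the claimed statement that $|\sum_{t=1}^T X_t|\leq v(\theta)\sum_{t=1}^T\sigma_t\log(2/\delta)^\theta$ holds with probability at least $1-\delta$.

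Since every manipulation is a monotone, invertible transformation of the single variable $\gamma$, there is no substantive analytical obstacle here—the corollary is essentially a restatement of Lemma~\ref{app_lem:non_martingale_sub_weibull_freedman_ineq} in the form most convenient for later use. The only points meriting a sentence of care are verifying $\gamma^\star\geq0$ so that Lemma~\ref{app_lem:non_martingale_sub_weibull_freedman_ineq} applies at this level (which holds because $\delta\in(0,1)$ forces $\log(2/\delta)>0$), and noting that replacing the strict inequality in the complement by a non-strict one is harmless for the final bound.
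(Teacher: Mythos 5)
Your proposal is correct and matches the paper's argument exactly: the paper likewise obtains the corollary by choosing $\gamma = v(\theta)\sum_{t=1}^T\sigma_t\log(\tfrac{2}{\delta})^\theta$ in Lemma~\ref{app_lem:non_martingale_sub_weibull_freedman_ineq} and passing to the complementary event. Nothing further is needed.
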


This corollary is derived by simply choosing $\gamma = v(\theta)\sum_{t=1}^T\sigma_t\log(\frac{2}{\delta})^\theta$ in Lemma \ref{app_lem:non_martingale_sub_weibull_freedman_ineq}. Notably, neither Lemma \ref{app_lem:non_martingale_sub_weibull_freedman_ineq} nor Corollary \ref{app_coro:non_martingale_sub_Weibull_freedman_ineq} requires $X_t$ to be a martingale.

\begin{lemma}{\citep{freedman1975tail}}\label{app_lem: berenstain_type_ineq}
    Given a filtered probability space ($\Omega$,$\mathcal{F}$,$(\mathcal{F}_t)$,P). Let $\{X_t\}_{t=1}^T$ is a martingale difference sequence adapted to $(\mathcal{F}_t)$. If for any $t\in\mathbb{N}_+$, $|X_t|\leq R$ almost surely and for some certain $T\in\mathbb{N_+}$, $\sum_{t=1}^T \E[|X_t|^2|\mathcal{F}_{t-1}]\leq F$ with the probability $1$. Then for any $\tau\in[T]$, we have following inequality holds with the probability at least $1-\delta$
    \begin{align}
        \sum_{t=1}^\tau\left|X_t \right| \leq \frac{2R\log(2/\delta)}{3}+ \sqrt{2F\log(2/\delta)}
    \end{align}
\end{lemma}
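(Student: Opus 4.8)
The final statement is the Bernstein--Freedman tail inequality for bounded martingale differences, attributed to \citep{freedman1975tail}, and before sketching a proof I must flag that the left-hand side as typeset, $\sum_{t=1}^\tau \norm{X_t}$, is \emph{not} the quantity this right-hand side controls. Indeed, for i.i.d.\ Rademacher increments $X_t = \pm R$ (a martingale difference sequence with $\norm{X_t}\le R$ and $\sum_{t=1}^T \Et{\norm[2]{X_t}} = TR^2$, so $F = TR^2$ is admissible) one has $\sum_{t=1}^\tau \norm{X_t} = \tau R$ deterministically, which for $\tau = T$ large exceeds $\tfrac{2R\log(2/\delta)}{3} + \sqrt{2F\log(2/\delta)} = \tfrac{2R\log(2/\delta)}{3} + R\sqrt{2T\log(2/\delta)}$; thus the stated inequality is false if the bars enclose each summand. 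The bars must therefore enclose the entire partial sum: the object that obeys the stated bound, and the one actually invoked downstream (e.g.\ to control the \emph{signed} martingale $\sum_t \brho\eta_t^2(\norm[2]{\xi_t^u} - \Et{\norm[2]{\xi_t^u}})$ in Lemma~\ref{lem:clipped_SsGD_HPB2_any_time}), is $\left|\sum_{t=1}^\tau X_t\right|$. I will accordingly prove the two-sided tail bound on $S_\tau \triangleq \sum_{t=1}^\tau X_t$.

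The plan is the classical exponential-supermartingale argument. Fix $\lambda \in (0, 3/R)$ and set $V_t \triangleq \sum_{s=1}^t \Et{\norm[2]{X_s}}$ and $M_t \triangleq \Exp{\lambda S_t - \phi(\lambda) V_t}$ with $\phi(\lambda) \triangleq (e^{\lambda R} - 1 - \lambda R)/R^2$. The technical heart is the one-step conditional moment bound: since $\E[X_s\mid\mathcal{F}_{s-1}] = 0$, $\norm{X_s}\le R$, and $u \mapsto (e^{\lambda u} - 1 - \lambda u)/u^2$ is nondecreasing, one obtains $\E[e^{\lambda X_s}\mid\mathcal{F}_{s-1}] \le \Exp{\phi(\lambda)\,\Et{\norm[2]{X_s}}}$, which makes $M_t$ a supermartingale and hence $\E[M_\tau]\le M_0 = 1$. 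Because $V_\tau \le V_T \le F$ almost surely and $\phi(\lambda)\ge 0$, a Markov/Chernoff step gives $P(S_\tau \ge x) \le \Exp{-\lambda x + \phi(\lambda)F}$ for every $x>0$.

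Using the standard estimate $\phi(\lambda) \le \lambda^2/\bigl(2(1 - \lambda R/3)\bigr)$ and optimizing over $\lambda\in(0,3/R)$ yields the Bernstein tail $P(S_\tau \ge x) \le \Exp{-x^2/\bigl(2(F + Rx/3)\bigr)}$. Setting the right-hand side equal to $\delta/2$ and solving the resulting quadratic via $\sqrt{a+b}\le\sqrt a + \sqrt b$ gives $S_\tau \le \tfrac{2R\log(2/\delta)}{3} + \sqrt{2F\log(2/\delta)}$ with probability at least $1-\delta/2$; applying the identical argument to $\{-X_t\}$ (again a martingale difference sequence with the same $R$ and the same conditional second moments) together with a union bound upgrades this to the two-sided bound on $|S_\tau|$ with probability $1-\delta$, matching the stated right-hand side exactly. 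The only genuinely delicate points are the correct reading of the left-hand side discussed above and the conditional-MGF inequality; note that the variance budget $V_\tau\le F$ holding \emph{surely} rather than merely with high probability is what keeps the Chernoff step free of an additional union bound. Since the result is classical, one may alternatively cite \citep{freedman1975tail} directly once the notation is fixed in this way.
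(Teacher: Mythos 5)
The paper offers no proof of this lemma at all --- it is cited directly from \citep{freedman1975tail} --- so there is nothing in-paper to compare your argument against. Your derivation is the standard exponential-supermartingale/Chernoff proof of Freedman's inequality in Bernstein form, and each step is sound: the one-step conditional MGF bound via the monotonicity of $u \mapsto (e^{\lambda u}-1-\lambda u)/u^2$ on $|u|\le R$, the supermartingale property of $M_t$ giving $\E[M_\tau]\le 1$, the fact that $V_\tau \le F$ holding almost surely lets the Chernoff step go through without an extra union bound over the variance process, the estimate $\phi(\lambda)\le \lambda^2/(2(1-\lambda R/3))$, and the inversion of the resulting quadratic via $\sqrt{a+b}\le\sqrt{a}+\sqrt{b}$, followed by a union bound over $\{X_t\}$ and $\{-X_t\}$.

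The most valuable part of your write-up is the preliminary flag, which is correct: as typeset, the left-hand side $\sum_{t=1}^\tau |X_t|$ makes the statement false, and your Rademacher counterexample ($X_t=\pm R$ i.i.d., so $\sum_{t=1}^\tau|X_t|=\tau R$ grows linearly while the right-hand side grows like $\sqrt{T}$) settles this decisively; the quantity that Freedman's inequality controls is $\left|\sum_{t=1}^\tau X_t\right|$. Be aware that the same misplacement of the absolute value propagates into the paper's proofs of Lemma~\ref{lem: clipped_SsGD_HPB1_anytime} and Lemma~\ref{lem:clipped_SsGD_HPB2_any_time}, whose displays again bound $\sum_t|X_t|$ rather than $|\sum_t X_t|$. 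Fortunately, what the proof of Theorem~\ref{thm:projected_ssgd_pcm_HPB_anytime} actually consumes are upper bounds on the signed sums $\sum_t \bar{\rho}\eta_t\langle \hat{x}_t-x_t,\xi_t^u\rangle$ and $\sum_t \bar{\rho}\eta_t^2\left(\|\xi_t^u\|^2-\E_t[\|\xi_t^u\|^2]\right)$, for which the corrected two-sided bound suffices, so the downstream conclusions survive once the notation is fixed as you propose.
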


\section{projected SsGD under the $\sigma$-sub-Weibull($\theta$) noises}\label{sec: SsGD_subweibull}

In this section, we formally present the convergence analysis of the projected SsGD method (Algorithm \ref{algo:Projected_SsGD}) under $\sigma$-sub-Weibull($\theta$)-type noise.
\begin{theorem}{(The full version of Theorem \ref{thm: SsGD})}\label{app_thm:full_version_projSsGD_general_thm}
    Suppose Assumption~\ref{assp:weak_conv}, \ref{assp:lower_bound}, \ref{assp:G_Lipsch}, \ref{assp:unbiased} and \ref{assp:sub_weibull} hold. Let $\{x_t\}$ be the iterate produced by projection-SsGD. Let $f_{1/\bar{\rho}(x_t)}$ is defined as~\eqref{eq: fun_moreau_envelope} and set $\bar{\rho}=3\rho$. If $\eta_t$ satisfy 
    \begin{align}
        \sum_{t=1}^T \eta_t = +\infty,\quad \sum_{t=1}^T\eta_t^2 \leq +\infty
    \end{align}
    Then given any $\delta\in(0,1)$, with the probability at least $1-\delta$:
    \begin{itemize}
        \item when $\theta=\frac{1}{2} $, we have following inequality
        \begin{align}
            \sumT\frac{\eta_t}{\sumT\eta_t}\norm[2]{\nab f_{1/3\rho}(x_t)} &\leq \frac{3\Delta_1+36\sigma^2\max_{t\in[t]}\eta_t\log(4/\delta)}{\sumT\eta_t}\nonumber\\
            &+\Brace{98\log(4/\delta)\sigma^2+9G^2}\rho\frac{\sumT\eta_t^2}{\sumT\eta_t}  
        \end{align}
        \item when $\theta\in(\frac{1}{2},1]$, we have following inequality 
        \begin{align}
            \sumT \frac{\eta_t}{\sumT\eta_t}\norm[2]{\nab f_{1/3\rho}}&\leq \frac{3\Delta_1+\max\{2130\sigma^2,72G\sigma\}\log(4/\delta)\max_{t\in[T]}\eta_t}{\sumT\eta_t}\nonumber\\
    &+ (2130\sigma^2\log(4/\delta)^{2\theta}+9G^2)\frac{\sumT\rho\eta_t^2}{\sumT\eta_t}
        \end{align}
        \item when $\theta>1$, we have following inequality
        \begin{align}
            \sumT\frac{\eta_t}{\sumT\eta_t}\norm[2]{\nab f_{1/3\rho}(x_t)}&\leq\frac{3\Delta_1+\Brace{18(11\theta\log(4/\delta))^{2\theta}\sigma^2+9G^2}\sumT\rho\eta_t^2}{\sumT\eta_t}\nonumber\\
    &+ \frac{\widehat{D}(\theta)\log(4/\delta)\max_{t\in[T]}\eta_t}{\sumT\eta_t}
        \end{align}
    \end{itemize}
    where $\widehat{D}(\theta)=\max\left\{15\times2^{3\theta+1}\Gamma(3\theta+1)\sigma^2,36G\sigma\log(8T/\delta)^{\theta-1}\right\}$
\end{theorem}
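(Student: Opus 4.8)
The plan is to convert the deterministic descent inequality of Lemma~\ref{app_lem:fundamental_lem} into a high-probability statement by controlling its two stochastic terms with the sub-Weibull concentration inequalities (Lemma~\ref{lem:sub_weibull_freedman_ineq} and Corollary~\ref{app_coro:non_martingale_sub_Weibull_freedman_ineq}). Setting $\bar\rho=3\rho$ makes $\frac{\bar\rho-\rho}{\bar\rho}=\frac23$, so summing \eqref{app_eq:fundemental_lemma} over $t\in[T]$ and telescoping (using $\Delta_{T+1}\ge0$) gives
\begin{align}\label{eq:proposal_start}
\frac23\sum_{t=1}^T\eta_t\|\nabla f_{1/3\rho}(x_t)\|^2 \le \Delta_1 + 3\rho\sum_{t=1}^T\eta_t\langle\hat x_t-x_t,\xi_t\rangle + 3\rho\sum_{t=1}^T\eta_t^2\|\xi_t\|^2 + 3\rho G^2\sum_{t=1}^T\eta_t^2 .
\end{align}
It then remains to bound the martingale sum and the noise-norm sum on the right with high probability.

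For the martingale sum I set $X_t=3\rho\eta_t\langle\hat x_t-x_t,\xi_t\rangle$; it satisfies $\E_t[X_t]=0$ by Assumption~\ref{assp:unbiased}, and since $\|\xi_t\|$ is $\sigma$-sub-Weibull($\theta$) and $\hat x_t-x_t$ is $\mathcal F_{t-1}$-measurable, Cauchy--Schwarz shows $X_t$ is $\sigma_{t-1}$-sub-Weibull($\theta$) with $\sigma_{t-1}=3\rho\eta_t\|\hat x_t-x_t\|\sigma$. The key identity $\|\nabla f_{1/3\rho}(x_t)\|=3\rho\|\hat x_t-x_t\|$ then gives $\sigma_{t-1}^2=\sigma^2\eta_t^2\|\nabla f_{1/3\rho}(x_t)\|^2$, so the conditional-variance proxy is proportional to exactly the quantity we want to bound. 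When $\theta>1/2$ Lemma~\ref{lem:sub_weibull_freedman_ineq} also requires $\sigma_{t-1}\le M_t$, and here the unboundedness of $\X$ is handled by Lemma~\ref{lem: bound_norm_hx_x} together with Assumption~\ref{assp:G_Lipsch}, which yield $\|\hat x_t-x_t\|\le\frac{2G}{\bar\rho-\rho}=\frac{G}{\rho}$ and hence $\sigma_{t-1}\le 3G\sigma\eta_t=:M_t$. Applying Lemma~\ref{lem:sub_weibull_freedman_ineq} then bounds $\sum_tX_t$ by $2\alpha\log(2/\delta)+\frac{a\sigma^2}{\alpha}\sum_t\eta_t^2\|\nabla f_{1/3\rho}(x_t)\|^2$ for any admissible $\alpha$.

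The decisive step is a self-bounding absorption. I would choose $\alpha=3\max\{bG\sigma,\,a\sigma^2\}\max_{t\in[T]}\eta_t$, which simultaneously meets $\alpha\ge b\max_tM_t$ and $\alpha\ge 3a\sigma^2\max_t\eta_t$; the latter forces $\frac{a\sigma^2}{\alpha}\eta_t\le\frac13$, so the variance term returned by the inequality is at most $\frac13\sum_t\eta_t\|\nabla f_{1/3\rho}(x_t)\|^2$ and can be moved to the left of \eqref{eq:proposal_start}, leaving the net descent $\frac13\sum_t\eta_t\|\nabla f_{1/3\rho}(x_t)\|^2$. This is precisely where the three regimes of $\theta$ enter: substituting the regime-dependent $a$ and $b$ of Lemma~\ref{lem:sub_weibull_freedman_ineq} into $2\alpha\log(2/\delta)$ produces the $\sigma^2$-, $G\sigma$-, and (for $\theta>1$) $\Gamma(3\theta+1)$- and $\log(T/\delta)^{\theta-1}$-dependent contributions that assemble into $\widehat D(\theta)$. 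I expect this absorption to be the main obstacle, since it is what couples the martingale's conditional variance to the gradient norm and dictates the admissible range of $\alpha$.

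For the noise-norm sum I would use that $\|\xi_t\|^2$ is $\sigma^2$-sub-Weibull($2\theta$) whenever $\|\xi_t\|$ is $\sigma$-sub-Weibull($\theta$), so each $3\rho\eta_t^2\|\xi_t\|^2$ is $3\rho\eta_t^2\sigma^2$-sub-Weibull($2\theta$). Because these terms are nonnegative and Corollary~\ref{app_coro:non_martingale_sub_Weibull_freedman_ineq} needs no martingale structure, it bounds their sum by $3\rho\sigma^2 v(2\theta)\bigl(\sum_t\eta_t^2\bigr)\log(2/\delta)^{2\theta}$, which is exactly the $(\log(1/\delta))^{2\theta}\sigma^2$ coefficient multiplying $\rho\sum\eta_t^2$ (for instance $9\rho v(1)\sigma^2=36e\,\rho\sigma^2\approx 98\rho\sigma^2$ at $\theta=\tfrac12$, matching the stated constant). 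Finally I would union-bound the martingale event and the noise-norm event at level $\delta/2$ each (accounting for the $\log(4/\delta)$ in the statement), combine the $G^2$ term of \eqref{eq:proposal_start} with the noise coefficient, divide through by $\frac13\sum_t\eta_t$, and read off the three displayed inequalities; the remaining work is regime-by-regime constant bookkeeping from the stated forms of $a$, $b$, and $v(2\theta)$.
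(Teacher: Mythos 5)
Your proposal is correct and follows essentially the same route as the paper: telescoping the fundamental lemma, bounding $\sum_t\bar\rho\eta_t^2\|\xi_t\|^2$ via the non-martingale sub-Weibull tail bound (Corollary~\ref{app_coro:non_martingale_sub_Weibull_freedman_ineq}), bounding the martingale term via Lemma~\ref{lem:sub_weibull_freedman_ineq} with $\sigma_{t-1}^2=\sigma^2\eta_t^2\|\nabla f_{1/3\rho}(x_t)\|^2$ and $M_t=3G\sigma\eta_t$ from Lemma~\ref{lem: bound_norm_hx_x}, and absorbing the returned variance term into the left-hand side by choosing $\alpha$ so that $\frac{a\sigma^2}{\alpha}\eta_t\le\frac13$. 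Your unified choice $\alpha=3\max\{bG\sigma,a\sigma^2\}\max_t\eta_t$ reproduces exactly the paper's regime-specific choices, and the remaining work is indeed only constant bookkeeping.
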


\begin{proof}

We first take the summation from $1$ to $T$ on both sides of \eqref{app_eq:fundemental_lemma} in Lemma \ref{app_lem:fundamental_lem}.

\begin{align}\label{app_eq:summation_fundemental_lemma}
    \frac{\brho-\rho}{\brho}\sum_{t=1}^T \eta_t\|\nabla f_{1/\brho}(x_t)\|^2 \leq \Delta_1 + \sum_{t=1}^T\brho\eta_t\left\langle \hx_t-x_t,\xi_t\right\rangle + \sum_{t=1}^T \brho \eta_t^2 \|\xi_t\|^2 + \brho G^2\sum_{t=1}^T \eta_t^2
\end{align}
Note that since $\forall x\in\X$, $f_{1/\brho}(x)\geq \min_{x\in\X}f$, we have $\Delta_1-\Delta_{T+1}\leq \Delta_1$. 

We first analysis $\sumT \brho\eta_t^2\|\xi_t\|^2$. By the Assumption \ref{assp:sub_weibull}, we have

\begin{align}
    \E\left[\Exp{\Brace{\frac{\brho\eta_t^2\|\xi_t\|^2}{\brho\eta_t^2\sigma^2}}^{\frac{1}{2\theta}}}\right]=\E\left[\Et{\Exp{\Brace{\frac{\brho\eta_t^2\|\xi_t\|^2}{\brho\eta_t^2\sigma^2}}^{\frac{1}{2\theta}}}}\right]\leq 2
\end{align}
which means $\bar{\rho}\eta_t^2\norm[2]{\xi_t^2}$ is $\eta_t^2\sigma^2\brho$-sub-Weibull($2\theta$). By Lemma \ref{app_coro:non_martingale_sub_Weibull_freedman_ineq}, the following inequality holds with 1-$\delta_1$
\begin{align}
    \sumT \brho\eta_t^2\norm[2]{\xi_t} \leq v(2\theta)\sumT \eta_t^2\sigma^2\brho \log{\frac{2}{\delta_1}}^{2\theta}
\end{align}
Note that we assume that $\theta\geq1/2$, thus $v(2\theta)=2(4e\theta)^{2\theta}$. Then we can get
\begin{align}\label{app_eq:HPB1_projSsGD}
     \sumT \brho\eta_t^2\norm[2]{\xi_t} \leq 2\brho\sigma^2\Brace{4e\theta\log{2/\delta_1}}^{2\theta}\sumT \eta_t^2
\end{align}

Now we estimate the probability upper bound of $\sumT \brho\eta_t\innerdot{\hx_t-x_t}{\xi_t}$. Note that $\Et{\brho\eta_t\innerdot{\hx_t-x_t}{\xi_t}}=0$, i.e., $\brho\eta_t\innerdot{\hx_t-x_t}{\xi_t}$ is the sequence martingales and we let 
\begin{align}
    &\sigma_{t-1} = \eta_t\brho\norm{\hx_t-x_t}\sigma\nonumber\\
    &M_t = \frac{2\brho G\sigma\eta_t}{\brho-\rho}\nonumber
\end{align}

Note that by the Cauchy-Schwarz inequality, $\brho\eta_t\langle \hx_t-x_t, \xi_t\rangle \leq \sigma_{t-1}$ holds almost surely, where $\sigma_t$ is adapted to $\mathcal{F}_t$. Using Lemma \ref{lem: bound_norm_hx_x} and Assumption \ref{assp:G_Lipsch}, we ensure $\sigma_{t-1} \leq M_t$ for all $t \in [T]$. and we have

\begin{align}
    \Et{\Exp{\Brace{\brho\eta_t|\innerdot{\hx_t-x_t}{\xi_t}|/\sigma_{t-1}}^{1/\theta}}} \leq \Et{\Exp{(\|\xi_t\|/\sigma})^{1/\theta}} \leq 2,
\end{align}
which implies that $\brho\eta_t\innerdot{\hx_t-x_t}{\xi_t}$ is $\sigma_{t-1}$-sub-Weibull($\theta$). Now we can apply Lemma \ref{lem:sub_weibull_freedman_ineq} and get the following inequality holds with the probability at least $1-\delta_2$
\begin{align}\label{app_eq:HBP2_projSsGD}
    \sumT \brho\eta_t\innerdot{\hx_t-x_t}{\xi_t}&\leq 2\alpha\log{\frac{2}{\delta_2}}+\frac{a}{\alpha}\sumT \sigma^2\eta_t^2 \brho^2\norm[2]{\hx_t-x_t}\nonumber\\
    &= 2\alpha\log{\frac{2}{\delta_2}}+\frac{a}{\alpha}\sumT \sigma^2\eta_t^2 \norm[2]{\nab \fbrho(x_t)}
\end{align}

Now we plug \eqref{app_eq:HPB1_projSsGD} and \eqref{app_eq:HBP2_projSsGD} into \eqref{app_eq:summation_fundemental_lemma} and get the following inequality holds with the probability at least $1-\delta_1-\delta_2$

\begin{align}
    \sumT \eta_t\Brace{\frac{\brho-\rho}{\brho}-\frac{a\sigma^2}{\alpha}\eta_t}\norm[2]{\nab \fbrho(x_t)} &\leq \Delta_1 + 2\alpha\log(2/\delta_2)\nonumber\\ &+\Brace{2\brho\sigma^2(4e\theta\log(2/\delta_1))^{2\theta}+\brho G^2}\sumT\eta_t^2
\end{align}

Since both $a$ and $\alpha$ are dependent on the value of $\theta$, we discuss the probability upper bounds corresponding to different ranges of $\theta$. For convenience, we first set $\brho = 3\rho$.

\textbf{Case 1. $\theta=\frac{1}{2}$}

When $\theta=1/2$, $a=2$ and we only require $\alpha$ is nonnegative. We Let $\alpha=6\sigma^2\max_{t\in[T]}\eta_t$, then we have
\begin{align}
    \frac{\brho-\rho}{\brho}-\frac{a\sigma^2}{\alpha}\eta_t=\frac{2}{3} - \frac{\eta_t}{\max_{t\in[T]}\eta_t}\geq\frac{1}{3}
\end{align}
Then we have
\begin{align}
    \frac{1}{3}\sumT \eta_t\norm[2]{\nab f_{1/3\rho}(x_t)}&\leq \Delta_1 + 12\sigma^2\max_{t\in[T]}\eta_t\log(2/\delta_2)+ \Brace{12e\log(2/\delta_1)\sigma^2+3G^2}\sumT \rho\eta_t^2 
\end{align}

Let $\delta_1=\delta_2=\delta/2$ and multiply $3\Brace{\sumT\eta_t}^{-1}$ on the both sides we get
\begin{align}
    \sumT\frac{\eta_t}{\sumT\eta_t}\norm[2]{\nab f_{1/3\rho}(x_t)} &\leq \frac{3\Delta_1+36\sigma^2\max_{t\in[t]}\eta_t\log(4/\delta)}{\sumT\eta_t}+ \Brace{36e\log(4/\delta)\sigma^2+9G^2}\frac{\sumT\rho\eta_t^2}{\sumT\eta_t} 
\end{align}

\textbf{Case 2. $\theta\in(\frac{1}{2},1]$}

When $\theta \in(1/2,1]$, $a=(4\theta)^{2\theta}e^2$ and $\alpha$ have to be bigger than $(4\theta)^\theta\max_{t\in[T  ]}M_t=2(4\theta)^\theta\brho G\sigma\max_{t\in[T]}\eta_t/(\brho-\rho)=3(4\theta)^\theta G\sigma\max_{t\in[T]}\eta_t$. 

We Let $\alpha = \max\{3(4\theta)^{2\theta}e^2\sigma^2,3(4\theta)^{\theta}G\sigma\max_{t\in[T]}\eta_t\}$, then we have
\begin{align}
    \frac{\brho-\rho}{\brho}-\frac{a\sigma^2}{\alpha}\eta_t = \frac{2}{3} - \frac{(4\theta)^{2\theta}e^2\sigma^2}{3\max\{(4\theta)^{2\theta}e^2\sigma^2,(4\theta)^{\theta}G\sigma\}}\cdot\frac{\eta_t}{\max_{t\in[T]}\eta_t}\geq \frac{1}{3}
\end{align}

Then we have the following inequality holds with the probability at leat $1-\delta_1-\delta_2$
\begin{align}
    \frac{1}{3}\sumT \eta_t\norm[2]{\nab f_{1/3\rho}(x_t)}&\leq \Delta_1 + 6(4\theta)^{\theta}\max\{(4\theta)^{\theta}e^2\sigma^2,G\sigma\}\log(2/\delta_2)\max_{t\in[T]}\eta_t\nonumber\\
    &+ \Brace{6\sigma^2(4e\theta\log(2/\delta_1))^{2\theta}+3\rho G^2}\sumT\eta_t^2\nonumber\\
    &\leq\Delta_1 + 24\max\{4e^2\sigma^2,G\sigma\}\log(2/\delta_2)\max_{t\in[T]}\eta_t\nonumber\\
    &+ \Brace{96e^2\sigma^2\log(2/\delta_1)^{2\theta}+3G^2}\sumT \rho\eta_t^2\nonumber\\
    &\leq \Delta_1 + \max\left\{710\sigma^2,24\sigma G\right\}\log(2/\delta_2)\max_{t\in[T]}\eta_t\nonumber\\
    &+ \Brace{710\sigma^2\log(2/\delta_1)^{2\theta}+3G^2}\sumT \rho\eta_t^2
\end{align}

where the second inequality holds since for $\theta \in (1/2, 1]$, we have $\theta^{2\theta} \leq \theta^\theta \leq \sqrt{\theta}$.
Let $\delta_1=\delta_2=\delta/2$ and multiply $3\Brace{\sumT\eta_t}^{-1}$ on the both sides we get

\begin{align}
    \sumT \frac{\eta_t}{\sumT\eta_t}\norm[2]{\nab f_{1/3\rho}}&\leq \frac{3\Delta_1+\max\{2130\sigma^2,72G\sigma\}\log(4/\delta)\max_{t\in[T]}\eta_t}{\sumT\eta_t}\nonumber\\
    &+ (2130\sigma^2\log(4/\delta)^{2\theta}+9G^2)\frac{\sumT\rho\eta_t^2}{\sumT\eta_t}
\end{align}

\textbf{Case 3. $\theta>1$}

When $\theta>1$, $a=(2^{2\theta+1}+2)\Gamma(2\theta+1)+\frac{2^{3\theta}\Gamma(3\theta+1)}{(3\log(4T/\delta_2)^{\theta-1})}$ and we require that $\alpha\geq 6G\sigma\log(4T/\delta_2)^{\theta-1}\max_{t\in[T]}\eta_t$. 

We let $\alpha = \max\left\{3(2^{2\theta+1}+2)\Gamma(2\theta+1)\sigma^2+\frac{2^{3\theta}\Gamma(3\theta+1)\sigma^2}{(\log(4T/\delta_2)^{\theta-1})},6G\sigma\log(4T/\delta_2)^{\theta-1}\right\}\max_{t\in[T]}\eta_t$ and also can make sure $\frac{(\brho-\rho)}{\brho} - \frac{(a\sigma^2\eta_t)}{\alpha}\geq1/3$. Then we have following inequality holds with the probability at least $1-\delta_1-\delta_2$.

\begin{align}
    \frac{1}{3}\sumT \eta_t\norm[2]{\nab f_{1/3\rho}(x_t)}&\leq \Delta_1 + \max\left\{6(2^{2\theta}+2)\Gamma(2\theta+1)\sigma^2+\frac{2^{2\theta}\Gamma(3\theta+1)\sigma^2}{\log(4T/\delta_2)^{\theta-1}}, \right.\nonumber\\
    &\left.12G\sigma\log(4T/\delta_2)^{\theta-1}\right\}\log(2/\delta_2)\max_{t\in T}\eta_t\nonumber\\
    &+\Brace{6(4e\theta\log(2/\delta_1))^{2\theta}\sigma^2+3G^2}\sumT \rho\eta_t^2\nonumber\\
    &\leq \Delta_1 + \max\left\{5\times2^{3\theta+1}\Gamma(3\theta+1)\sigma^2,12G\sigma\log(4T/\delta_2)^{\theta-1}\right\}\cdot\log(2/\delta_2)\max_{t\in[T]}\eta_t\nonumber\\
    &+ \Brace{6(11\theta\log(2/\delta_1))^{2\theta}\sigma^2+3G^2}\sumT \rho\eta_t^2
\end{align}

Let $\delta_1=\delta_2=\delta/2$ and multiply $3\Brace{\sumT\eta_t}^{-1}$ on the both sides we get

\begin{align}
    \sumT\frac{\eta_t}{\sumT\eta_t}\norm[2]{\nab f_{1/3\brho}(x_t)}&\leq\frac{3\Delta_1+\widehat{D}(\theta)\log(4/\delta)\max_{t\in[T]}\eta_t}{\sumT\eta_t}\nonumber\\
    &+ \Brace{18(11\theta\log(4/\delta))^{2\theta}\sigma^2+9G^2}\frac{\sumT\rho\eta_t^2  }{\sumT\eta_t}
\end{align}

\end{proof}

\subsection{Proof of Corollary \ref{coro: time_varying_step_SsGD_subweibull}}

\begin{proof}
    If we set $\eta_t = \frac{\gamma}{\sqrt{t}}$, then $\max_{t\in[T]}\eta_t = \gamma$. Similar to the analysis of Theorem \ref{thm: SsGD}, the only difference is that we multiply $2(\eta_T T)^{-1}$ on both sides in the final step of the analysis for each case and note that $\sumT\eta_t^2\leq\gamma^2\log(eT)$. Then we have following results:
    \begin{itemize}
        \item when $\theta=\frac{1}{2}$, we have following inequality
        \begin{align}
            \frac{1}{T}\sumT \norm[2]{\nab f_{1/3\rho}(x_t)} &\leq \frac{3\Delta_1}{\gamma\sqrt{T}} + \frac{36\log(4/\delta)\sigma^2}{\sqrt{T}}+ \gamma\rho(98\log(4/\delta)\sigma^2+9G^2)\frac{\log(eT)}{\sqrt{T}}
        \end{align}
        \item when $\theta=(\frac{1}{2},1]$, we have following inequality
        \begin{align}
            \frac{1}{T}\sumT \norm[2]{\nab f_{1/3\rho}(x_t)}&\leq \frac{3\Delta_1}{\gamma\sqrt{T}} + \frac{\max\{2130\sigma^2,72G\sigma\}\log(4/\delta)}{\sqrt{T}}\nonumber\\
            &+ \gamma\rho(2130\log(4/\delta)^{2\theta}\sigma^2+9G^2)\frac{\log(eT)}{\sqrt{T}}
        \end{align}
        \item when $\theta>1$, we have following inequality 
        \begin{align}
            \frac{1}{T}\sumT \norm[2]{\nab f_{1/3\rho}(x_t)}&\leq \frac{3\Delta_1}{\gamma\sqrt{T}} + \frac{\widehat{D}(\theta)\log(4/\delta)}{\sqrt{T}}+ \gamma\rho(18(11\theta\log(4/\delta))^{2\theta}\sigma^2+9G^2)\frac{\log(eT)}{\sqrt{T}}
        \end{align}
    \end{itemize}
\end{proof}

\subsection{Proof of Corollary \ref{coro: fix_step_SsGD_subweibull}}

\begin{proof}
    Let fix $\eta_t$ as $\eta$, then $\max_{t\in[T]}\eta_t=\eta$. The Theorem \ref{app_thm:full_version_projSsGD_general_thm} becomes

        \begin{itemize}
            \item when $\theta=\frac{1}{2}$, we have
            \begin{align}
                \frac{1}{T}\sumT \norm[2]{\nab f_{1/3\rho}(x_t)}\leq \frac{3\Delta_1}{T\eta}+\rho(98\log(4/\delta)\sigma^2+9G^2)\eta +\frac{36\sigma^2\log(4/\delta)}{T}
            \end{align}
            Then we choose $\eta=\sqrt{\frac{3\Delta_1}{\rho(98\log(4/\delta)\sigma^2+9G^2)T}}$, and we get
            \begin{align}
                \frac{1}{T}\sumT \norm[2]{\nab f_{1/3\rho}(x_t)}\leq\sqrt{\frac{\rho\Delta_1(294\log(4/\delta)\sigma^2+27G^2)}{T}}+\frac{36\log(4/\delta)\sigma^2}{T}
            \end{align}
            \item when $\theta\in(\frac{1}{2},1]$, we have
            \begin{align}
                \frac{1}{T}\sumT \norm[2]{\nab f_{1/3\rho}(x_t)} &\leq \frac{3\Delta_1}{T\eta}
                + (2130\sigma^2\log(4/\delta)^{2\theta}+9G^2)\rho\eta+ \frac{\max\{2130\sigma^2,72G\sigma\}\log(4/\delta)}{T}
            \end{align}
            Let we choose $\eta=\sqrt{\frac{3\Delta_1}{\rho(2130\log(4/\delta)^{2\theta}\sigma^2+9G^2)T}}$, then we get
            \begin{align}
                \frac{1}{T}\sumT \norm[2]{\nab f_{1/3\rho}(x_t)} &\leq \sqrt{\frac{\rho\Delta_1(6390\log(4/\delta)^{2\theta}\sigma^2+27G^2)}{T}}+\frac{\max\{2130\sigma^2,72G\sigma\}\log(4/\delta)}{T}
            \end{align}
            \item when $\theta>1$, we have
            \begin{align}
                \frac{1}{T}\sumT \norm[2]{\nab f_{1/3\rho}(x_t)}&\leq \frac{3\Delta_1}{T\eta}+(18(11\theta\log(4/\delta))^{2\theta}\sigma^2+9G^2)\rho\eta+\frac{\widehat{D}(\theta)\log(4/\delta)}{T}
            \end{align}
            Let $\eta = \sqrt{\frac{3\Delta_1}{\rho(18(11\theta\log(4/\delta))^{2\theta}+9G^2)T}}$, then we get
        \end{itemize}
        \begin{align}
            \frac{1}{T}\sumT \norm[2]{\nab f_{1/3\rho}(x_t)}&\leq \sqrt{\frac{\rho\Delta_1(54(11\theta\log(4/\delta))^{2\theta}\sigma^2+27G^2)}{T}}+\frac{\widehat{D}(\theta)\log(4/\delta)}{T}
        \end{align}
\end{proof}

\section{projected clipped-SsGD under the \textit{p}-BCM noises}
\subsection{Proof of Lemma \ref{lem:key_lemma_clipped_SsGD}}
By the definitions of $\xi_t$, $\xi_t^u$ and $\xi_t^b$, given the filtration $\mathcal{F}_{t-1} = \sigma(g_1, g_2, \dots, g_{t-1})$, $\xi_t$ and $\xi_t^u$ depend only on the $t$-th stochastic gradient oracle, while $\xi_t^b$ is adapted to $\mathcal{F}_{t-1}$. Thus, taking the conditional expectation $\E_t[\cdot] \triangleq \E[\cdot|\mathcal{F}_{t-1}]$, and setting $\epsilon = 1$ and $\lambda_t \geq 2G \geq 2\partial_t$, the results in Lemma \ref{app_lem:general_bias_clipped_rand_vec} remain valid.

Before proving Lemmas \ref{lem: clipped_SsGD_HPB1_anytime} and \ref{lem:clipped_SsGD_HPB2_any_time}, we present the following lemma, which follows directly from our choices of $\eta_t$ and $\lambda_t$.

\begin{lemma}\label{app_lem:eta_lambda_choice_clipped_SsGD}
    If we choose $\lambda_t=\max\{2G,\lambda t^{1/p}\}$ and $\eta_t=\eta_0\min\{1/\lambda_t,1/(G\sqrt{t})\}$, where $\lambda$ and $\eta_0$ can be any positive numbers. We have following inequalities
    \begin{align}
        \eta_t\lambda_t\leq\eta_0,\,(\sigma/\lambda_t)^p\leq (\sigma/\lambda)t^{-1},\, G^2\eta_t^2\leq \eta_0^2t^{-1}
    \end{align}
\end{lemma}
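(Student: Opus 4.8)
The plan is to verify the three inequalities by directly unpacking the definitions $\lambda_t = \max\{2G,\lambda t^{1/p}\}$ and $\eta_t = \eta_0\min\{1/\lambda_t,\,1/(G\sqrt{t})\}$, using only the elementary observations that a minimum is at most each of its arguments and a maximum is at least each of its arguments. No concentration, convexity, or weak-convexity machinery enters; this is a pure bookkeeping step whose sole purpose is to repackage the step-size and clipping-level schedules into the forms consumed later by Lemmas \ref{lem: clipped_SsGD_HPB1_anytime} and \ref{lem:clipped_SsGD_HPB2_any_time} and by the summation arguments of the proof sketch.

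For the first bound, since $\eta_t \le \eta_0/\lambda_t$ (dropping the $1/(G\sqrt{t})$ branch of the minimum), multiplying through by $\lambda_t>0$ gives $\eta_t\lambda_t \le \eta_0$. For the third bound, I would instead keep the other branch: $\eta_t \le \eta_0/(G\sqrt{t})$ implies $\eta_t^2 \le \eta_0^2/(G^2 t)$, hence $G^2\eta_t^2 \le \eta_0^2 t^{-1}$. For the middle bound, I would use $\lambda_t \ge \lambda t^{1/p}$ together with monotonicity of $x\mapsto x^p$ on $(0,\infty)$ to get $\lambda_t^p \ge \lambda^p t$, so that $(\sigma/\lambda_t)^p = \sigma^p/\lambda_t^p \le \sigma^p/(\lambda^p t) = (\sigma/\lambda)^p t^{-1}$.

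There is no genuine obstacle here; every line is a one-step consequence of the monotonicity of $\min$ and $\max$. The only points worth flagging are that (i) the middle inequality as stated should carry the exponent $p$, i.e.\ read $(\sigma/\lambda)^p t^{-1}$, which is exactly what the argument produces and what the downstream estimates require; and (ii) all three bounds hold for every integer $t\ge 1$, since $\lambda_t \ge 2G>0$, $t^{1/p}\ge 1$, and $\sqrt{t}\ge 1$ keep all denominators positive and make both dropped branches legitimate upper bounds on $\eta_t$.
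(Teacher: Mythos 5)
Your proof is correct and matches the paper's intent exactly: the paper states this lemma without proof as "following directly" from the choices of $\eta_t$ and $\lambda_t$, and your three one-line deductions (dropping the appropriate branch of the $\min$ for the first and third bounds, and using $\lambda_t\geq\lambda t^{1/p}$ for the second) are precisely the argument. Your observation that the middle inequality should read $(\sigma/\lambda)^p t^{-1}$ is also right — that is the form actually used in the downstream proofs of Lemmas \ref{lem: clipped_SsGD_HPB1_anytime} and \ref{lem:clipped_SsGD_HPB2_any_time}, so the stated exponent is a typo in the paper.
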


\subsection{Proof of Lemma \ref{lem: clipped_SsGD_HPB1_anytime}}
\begin{proof}
    We first notice that 
    \begin{align}
        |\brho\eta_t\langle\hx_t-x_t,\xi_t^u\rangle|\leq \brho\eta_t\|\hx_t-x_t\|\|\xi_t^u\|\leq \brho\eta_t\cdot\frac{2G}{\brho-\rho}\cdot2\lambda_t\leq 4G\cdot\lambda_t\eta_t\leq 4G\eta_0
    \end{align}
where the second inequality is by the Lemma \ref{lem: bound_norm_hx_x} and \ref{lem:sub_weibull_freedman_ineq} and the last inequality is by Lemma \ref{app_lem:eta_lambda_choice_clipped_SsGD}.
we also have
\begin{align}
    \E_t[|\brho\eta_t\langle\hx_t-x_t,\xi_t^u\rangle|^2]&\leq 4\rho^2\eta_t^2\|\hx_t-x_t\|^2\E[\|\xi_t^u\|^2]\leq 160G^2(\eta_t\lambda_t)^2\cdot\frac{(2-B^{-1})(\sigma/\lambda_t)^p}{B^{p-1}}\nonumber\\
    &\leq 320(G\eta_0)^2B^{1-p}(\sigma/\lambda)^p\cdot\frac{1}{t}
\end{align}
Hence, we can get
\begin{align}
    \sum_{t=1}^T \E_t[|\brho\eta_t\langle\hx_t-x_t,\xi_t^u\rangle|^2] \leq 320(G\eta_0)^2B^{1-p}(\sigma/\lambda)^p\cdot\log(eT)
\end{align}
To apply Lemma \ref{app_lem: berenstain_type_ineq}, we let $R = 4G\eta_0$ and $F=160(G\eta_0)^2B^{1-p}(\sigma/\lambda)^p\cdot\log(eT)$, then we have following inequality holds with the probability at least $1-\delta/2$.
\begin{align}
    \sum_{t=1}^T |\brho\eta_t\langle\hx_t-x_t,\xi_t^u\rangle|&\leq \frac{2R}{3}\log(4/\delta) + \sqrt{2F\log(4/\delta)}\nonumber\\
    &\leq \frac{8\eta_0G\log(4/\delta)}{3} + \sqrt{640 B^{1-p}(G\eta_0)^2(\sigma/\lambda)^p\log(eT)\log(4/\delta)}\nonumber\\
    &\leq 28G\eta_0\left(\log(4/\delta)+\sqrt{B^{1-p}(\sigma/\lambda)^p\log(eT)\log(4/\delta)}\right)
\end{align}

\end{proof}
 
\subsection{Proof of Lemma \ref{lem:clipped_SsGD_HPB2_any_time}}

\begin{proof}
    Firstly, we have
    \begin{align}
        \left|\brho\eta_t^2(\|\xi_t^u\|^2-\E_t\|\xi_t\|^2)\right|\leq \brho\eta_t^2(\|\xi_t^u\|^2+\E_t\|\xi_t\|^2)\leq 8\brho(\lambda_t\eta_t)^2\leq 16\rho\eta_0^2
    \end{align}
    then we get 
    \begin{align}
    \E_t\left[\left|\brho\eta_t^2(\|\xi_t^u\|^2-\E_t\|\xi_t^u\|^2)\right|^2\right] &= \E_t\left[\brho^2\eta_t^4\left(\|\xi_t\|^4-2\langle \|\xi_t\|^2,\E_t\|\xi_t\|^2\rangle+(\E_t\|\xi_t^u\|^2)^2\right)\right]\nonumber\\
    &\leq \brho^2\eta_t^4\left(\E_t\|\xi_t^u\|^4-2(\E_t\|\xi_t^u\|^2)^2+(\E_t\|\xi_t^u\|^2)^2\right)\nonumber\\
    &\leq \brho^2\eta_t^4\E_t\|\xi_t^u\|^4\leq \brho^2\eta_t^4\cdot4\lambda_t^2 \E_t\|\xi_t^u\|^2\nonumber\\
    &\leq 4\brho^2 \eta_0^2 (\eta_t\lambda_t)^2\cdot 20(\sigma/\lambda_t)^pB^{1-p}\nonumber\\
    &\leq 320\rho^2\eta_0^4(\sigma/\lambda)^pB^{1-p}t^{-1}
    \end{align}
    Now, we have the upper bound
    \begin{align}
        \sum_{t=1}^T \E_t\left[\left|\brho\eta_t^2(\|\xi_t^u\|^2-\E_t\|\xi_t^u\|^2)\right|^2\right]\leq 320\rho^2\eta_0^4(\sigma/\lambda)^pB^{1-p}\log(eT)
    \end{align}
    Similarly, we let $R = \brho(\lambda_t\eta_t)^2\leq 16\rho\eta_0^2$ and $F = 320\rho^2\eta_0^4(\sigma/\lambda)^pB^{1-p}\log(eT)$ then apply Lemma \ref{app_lem: berenstain_type_ineq} and get the following inequality holds with the probability at least $1-\delta/2$
    \begin{align}
        \sumT \left|\brho\eta_t^2(\|\xi_t^u\|^2-\E_t\|\xi_t\|^2)\right| &\leq \frac{2R\log(4/\delta)}{3} + \sqrt{2F\log(4/\delta)}\nonumber\\
        &\leq \frac{32}{3}\rho\eta_0^2\log(4/\delta) + \sqrt{640(\sigma/\lambda)^p\log(eT)\log(4/\delta)B^{1-p}}\nonumber\\
        &\leq 26\eta_0^2\rho\left(\log(4/\delta) + \sqrt{(\sigma/\lambda)^p\log(eT)\log(4/\delta)B^{1-p}}\right)
    \end{align}
\end{proof}

\subsection{Proof of Theorem \ref{thm:projected_ssgd_pcm_HPB_anytime}}

\begin{proof}
    Let $\brho=2\rho$ and we take the summation of \eqref{eq: decomposition_fundemental_lemma_clipped_SsGD} from $1$ to $T$, then we get
    \begin{align}\label{eq:summation_decomposition_fundemental_lemma_clipped_SsGD}
       \frac{1}{2}\sumT \eta_t\norm[2]{\nab f_{1/2\rho}(x_t)} &\leq \Delta_1 + \sumT 2\rho\eta_t\langle \hx_t-x_t, \xi_t^u\rangle + \sumT 4\rho\eta_t^2(\|\xi_t^u\|^2-\E_t\|\xi_t^u\|^2)\nonumber\\
        &+ \sumT2\rho\eta_t\langle \hx_t-x_t,\xi_t^b\rangle+ \sumT2\rho\eta_t^2(2\E_t\|\xi_t^u\|^2+2\|\xi_t^b\|^2+G^2)
    \end{align}
    Note that combine Lemma \ref{lem: clipped_SsGD_HPB1_anytime} with Lemma \ref{lem:clipped_SsGD_HPB2_any_time}, we have following inequality holds with the probability at least $1-\delta$.
    \begin{align}\label{eq:summation_HPB_clipped_SsGD}
        &\sumT2\rho\eta_t\langle \hx_t-x_t,\xi_t^b\rangle+ \sumT4\rho\eta_t^2(2\E_t\|\xi_t^u\|^2+2\|\xi_t^b\|^2+G^2)\nonumber\\
        &\leq (28G\eta_0+42\eta_0^2\rho)\left(\log(4/\delta) + \sqrt{(\sigma/\lambda)^p\log(eT)\log(4/\delta)B^{1-p}}\right)\nonumber\\
        &\leq (28G\eta_0+42\eta_0^2\rho)\left(\frac{3}{2}\log(4/\delta)+\frac{(\sigma/\lambda)^p\log(eT)}{2B^{p-1}}\right)
    \end{align}
    For the remind terms, we the have following deterministic following inequalities
    \begin{align}\label{eq:deterministic_summation1_clipped_SsGD}
        \sumT 2\rho\eta_t\langle\hx_t-x_t,\xi_t^b\rangle &\leq \sumT 2\rho\eta_t\|\hx_t-x_t\|\|\xi_t^b\|\leq \sumT 4G\cdot(\eta_t\lambda_t)\cdot \frac{2(2-B^{-1})(\sigma/\lambda_t)^p}{B^{p-1}}\nonumber\\
        &\leq 16G\eta_0(\sigma/\lambda)^pB^{1-p}\sumT t^{-1}\leq 16G\eta_0(\sigma/\lambda)^pB^{1-p}\log(eT)
    \end{align}
    and 
    \begin{align}\label{eq:deterministic_summation2_clipped_SsGD}
        2\rho\sumT\eta_t^2(\E_t\norm[2]{\xi_t^u}+2\norm[2]{\xi_t^b}+G^2)&\leq 2\rho\eta_t^2\left(40(2-B^{-1})\sigma^p\lambda_t^{2-p}B^{1-p}+G^2\right)\nonumber\\
        &\leq \sumT160\rho(\eta_t\lambda_t)^2(\sigma/\lambda_t)^pB^{1-p}+\sumT2\rho G^2\eta_t^2\nonumber\\
        &\leq 160\rho\eta_0^2(\sigma/\lambda)^pB^{1-p}\log(eT) + 2\rho\eta_0^2\log(eT)
    \end{align}
    Then we plug \eqref{eq:summation_HPB_clipped_SsGD}, \eqref{eq:deterministic_summation1_clipped_SsGD} and \eqref{eq:deterministic_summation2_clipped_SsGD} into \eqref{eq:summation_decomposition_fundemental_lemma_clipped_SsGD}, then we get the following inequality holds with the probability at least $1-\delta$
    \begin{align}
        \frac{1}{2}\sumT\eta_t\norm[2]{\nab f_{1/2\rho}(x_t)} &\leq \Delta_1 + \left(30G\eta_0+186\rho\eta_0^2\right)\left((\sigma/\lambda)^pB^{1-p}\log(eT)+\log(eT)\right)\nonumber\\
        &+\left(42G\eta_0+39\eta_0^2\rho\right)\log(4/\delta)+2\rho\eta_0^2\log(eT)\nonumber\\
        &\leq \Delta_1 +\left (42G\eta_0+182\rho\eta_0^2\right)\left((\sigma/\lambda)^pB^{1-p}\log(eT)+\log(4/\delta)\right) \nonumber\\
        &+ 2\rho\eta_0^2\log(eT)
    \end{align}
    Now we multiply $2/(\eta_TT) = (2/\eta_0)\max\{\lambda_T/T,G/\sqrt{T}\}\leq 2/\eta_0\max\{\lambda/T^{(p-1)/p},G/\sqrt{T}\}$ on the both sides of above inequality, then we get
    \begin{align}
        \frac{1}{T}\sumT \norm[2]{\nab f_{1/2\rho}(x_t)} &\leq \left\{\frac{2\Delta_1}{\eta_0}+\left (84G+364\rho\eta_0\right)\left((\sigma/\lambda)^pB^{1-p}\log(eT)+\log(4/\delta)\right)\right.\nonumber\\
        &+ 4\rho\eta_0\log(eT)\Big\}\max\left\{\frac{\lambda}{T^{(p-1)/p}},\frac{G}{\sqrt{T}}\right\}
    \end{align}
    If $\lambda=\sigma$ and $B=1$, then we have
    \begin{align}
        \frac{1}{T}\sumT \norm[2]{\nab f_{1/2\rho}(x_t)} &\leq \left\{\frac{2\Delta_1}{\eta_0}+\left (84G+368\rho\eta_0\right)\log(4eT/\delta)\right\}\cdot\max\left\{\frac{\lambda}{T^{(p-1)/p}},\frac{G}{\sqrt{T}}\right\}
    \end{align}
\end{proof}

\subsection{Proof of Theorem \ref{thm: clipped_SsGD_inexpectation_any_time}}

\begin{proof}
    We take expectation, on the both sides of \eqref{eq:summation_decomposition_fundemental_lemma_clipped_SsGD} then we get
    \begin{align}
        \frac{1}{2}\sum_{t=1}^T \E\left[\norm[2]{\nab f_{1/2\rho}(x_t)}\right] &\leq \Delta_1 + \sumT 2\rho\eta_t\E\langle\hx_t-x_t,\xi_t^b\rangle\nonumber\\
        &+ \sumT 2\rho\eta_t^2 (2\E\|\xi_t^u\|^2+2\E\|\xi_t^b\|^2+G^2)\nonumber\\
        &\leq \Delta_1 + \sumT 2\rho\eta_t\E[\|\hx_t-x_t\|\|\xi_t^b\|]\nonumber\\
        &+ \sumT 2\rho\eta_t^2 \E[(2\E_t\|\xi_t^u\|^2+2\|\xi_t^b\|^2+G^2)]\nonumber\\
        &\leq \Delta_1 + 16G\eta_0(\sigma/\lambda)^pB^{1-p}\log(eT)\nonumber\\
        &+160\rho\eta_0^2(\sigma/\lambda)^pB^{1-p}\log(eT)+ 2\rho\eta_0^2\log(eT)\nonumber\\
        &= \Delta_1 + (16G\eta_0+160\rho\eta_0^2)(\sigma/\lambda)^pB^{1-p}\log(eT)\nonumber\\
        &+2\rho\eta_0^2\log(eT)
    \end{align}
    Multiplying $2/(\eta_TT) = (2/\eta_0)\max\{\lambda_T/T,G/\sqrt{T}\}\leq 2/\eta_0\max\{\lambda/T^{(p-1)/p},G/\sqrt{T}\}$ on the both sides of above inequality then we get
    \begin{align}
        \frac{1}{T} \sumT \E\left[\norm[2]{\nab f_{1/2\rho}(x_t)}\right]&\leq \left\{\frac{2\Delta_1}{\eta_0} + (32G+320\rho\eta_0)(\sigma/\lambda)^pB^{1-p}\log(eT)+4\rho\eta_0\log(eT)\right\}\nonumber\\
        &\cdot \max\left\{\frac{\lambda}{T^{(p-1)/p}},\frac{G}{\sqrt{T}}\right\}
    \end{align}
    If $\lambda=\sigma$ and $B=1$, then we have
    \begin{align}
         \frac{1}{T} \sumT \E\left[\norm[2]{\nab f_{1/2\rho}(x_t)}\right]&\leq\left\{\frac{2\Delta_1}{\eta_0}+\left(32G+324G\rho\eta_0\right)\log(eT)\right\}\cdot\max\left\{\frac{\lambda}{T^{(p-1)/p}},\frac{G}{\sqrt{T}}\right\}
    \end{align}
\end{proof}
\end{document}